\newcommand{\lu}[1]{{\color{magenta}#1}}
\newtheorem{theorem}{Theorem}
\numberwithin{theorem}{section}
\newtheorem{definition}[theorem]{Definition}
\newtheorem{remark}[theorem]{Remark}
\newtheorem{corollary}[theorem]{Corollary}
\newtheorem{lemma}[theorem]{Lemma}
\newtheorem{observation}[theorem]{Observation}
\newtheorem{fact}[theorem]{Fact}
\newcommand{\nin}[0]{\notin}
\newcommand{\enum}[2]{\begin{enumerate}[\hspace*{0.5cm}#1] #2 \end{enumerate}}
\newcommand{\N}[0]{\mathbb{N}}
\newcommand{\G}[0]{\mathbb{G}}
\newcommand{\nil}[0]{\emptyset}
\newcommand{\Q}[0]{\mathbb{Q}}
\newcommand{\ci}[0]{\widebar{int}}
\title{Planar graphs with separation are dp-minimal}
\author{J. de la Nuez Gonz\'alez }
\email{jdelanuez@kias.re.kr}
\address{Korea Institute for Advanced Study (KIAS)}
\date{\today}
\begin{document}

\begin{abstract}
We prove that given a planar embedding of a graph in the sphere the expansion of the graph structure by predicates encoding separation of vertices by simple cycles of the graph is dp-minimal.
\end{abstract}
\maketitle
\section{Introduction}
  
  The notion of an NIP theory is a central to classification theory.
  While it is usually defined in terms of the absence of uniformly definable families of sets shattering infinite families of tuples, an alternative characterization can be given in terms of the boundedness of so called dp-rank (see \cite{simon2015guide} Section 4.2), which describes the behavior of collections of mutually indiscernible sequences under the addition of parameters.
  
  The aim of this note is to provide a family of simple geometrical examples of non-stable theories that are dp-minimal, i.e. for which the dp-rank equals one.
  We will assume the reader is familiar with indiscernible sequences and other basics of model theory. We refer to \cite{tent2012course} for a general introduction to model theory and to \cite{simon2015guide} for an introduction to NIP theories. We work with indiscernible sequences indexed by arbitrary infinite linear orders.

  \begin{definition}
  Two indiscernible sequences $I=(a_{j})_{j\in J}$ and $I'=(a'_{j})_{j\in J'}$ are mutually indiscernible if $I$ is indiscernible over $I'$ and viceversa.
  \end{definition}
  
  \begin{definition}
  A theory $T$ is dp-minimal if for any model of $M$ of $T$, any two mutually indiscernible sequences of tuples $(a_{i}^{1})_{i\in J_{1}}$ and $(a_{i}^{2})_{j\in J_{2}}$ and any element $b$ in $M$ there is $l\in\{1,2\}$ such that
  $(a_{i}^{l})_{i\in J_{l}}$ is indiscernible over $b$.
  \end{definition}

  Fix a countable planar graph $G=(V,E)$ and consider a planar embedding $\iota:|G|\to S^{2}$ of its geometric realization into the sphere. With $\iota$ we associate an expansion $G^{\iota}$ of the graph structure $G$ as follows. For each $n\in\N$ we introduce a new $(n+2)$-ary predicate symbol $R_{n}$ and we let $G^{\iota}\models R_{n}(a,b,c_{1,},c_{2},\cdots c_{n})$  if and only $c_{1},c_{2},\cdots c_{n}$ is the sequence of vertices read along a simple cycle $\gamma$ of the graph, possibly with cyclically contiguous repetitions,  $\{a,b\}\cap\{c_{1},c_{2},\cdots c_{n}\}=\emptyset$ and $\iota(a),\iota(b)$ lie on the same connected component of
  $S^{2}\setminus\iota(|\gamma|)$.

  \newcommand{\ppb}[2]{\mathbf{D}^{+}_{#1}(#2)}
  \newcommand{\nnb}[2]{\mathbf{D}^{-}_{#1}(#2)}
  \newcommand{\is}[2]{(#1_{j})_{j\in#2}}
  \newcommand{\ajs}[0]{(a_{j})_{j\in J}}
  We prove the following:
  \begin{theorem}
  \label{main theorem} The theory $T=Th(G^{\iota})$ is dp-minimal
  \end{theorem}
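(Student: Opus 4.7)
I would argue by contradiction: suppose $I_1 = (a^1_i)_{i \in J_1}$ and $I_2 = (a^2_j)_{j \in J_2}$ are mutually indiscernible and $b \in M$ is a parameter with neither sequence indiscernible over $b$. A preliminary step, standard in dp-minimality arguments, lets me assume both $I_1$ and $I_2$ are sequences of single vertices of $G$ rather than tuples: atomic formulas in the language of $G^{\iota}$ split coordinatewise, so by Ramsey thinning we can project to coordinates where the failure of indiscernibility already occurs. The failure is then witnessed by atomic formulas — either edge formulas $E(x,b)$, which are handled easily because $b$ has finite degree in $G$ and any alternation of $E(a^l_i,b)$ is erased after Ramsey refinement, or the interesting case of separation formulas $R_{n_l}(x, b, \bar c_l)$ whose truth value alternates along $I_l$.

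The heart of the argument will be topological. Each witness produces, for $l\in\{1,2\}$, a family of simple cycles $\gamma_l(i)$ in $G$ (with parameters possibly drawn from $I_l$, but whose combinatorial type is constant after further Ramsey thinning) such that $\iota(b)$ lies in a component of $S^2 \setminus \iota(|\gamma_l(i)|)$ whose relative position to some $\iota(a^l_i)$ alternates. The decisive property of the planar embedding is that any two simple cycles in $G$ share a (possibly empty) disjoint union of paths, so their images under $\iota$ give a non-crossing system of Jordan curves in $S^2$. I would package this as a Helly/tree lemma: for a finite family of simple cycles of $G$, the connected components of $S^2$ minus the union of their embedded images form a tree-like hierarchy around $\iota(b)$, so that the separation types of other vertices over $b$ fit into a single tree order.

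To finish, I would combine mutual indiscernibility with this tree structure: permuting $I_1$ only affects the side structure attached to $\gamma_1$, and symmetrically for $I_2$, so if both sequences alternated they would contribute two independent sources of alternation localized in disjoint regions meeting at $\iota(b)$. The non-crossing hierarchy, however, forces these regions to be linearly nested near $\iota(b)$, so that the alternation of one sequence must be swallowed by the stabilized side of the other, yielding the contradiction. The main obstacle I anticipate is the case where the cycle $\gamma_l(i)$ has vertices drawn from $I_l$ itself, so the Jordan curve is a moving family rather than a fixed object; I expect this is resolved by the Helly-type lemma above, together with a careful use of mutual indiscernibility to stabilize the combinatorial location of $b$ relative to the moving cycle, ultimately reducing dp-minimality of $T$ to that of a tree order.
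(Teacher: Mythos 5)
Your intuition about the non-crossing geometry is aligned with the paper's, but the proposal has a genuine gap precisely at the place you flag: the ``moving cycle'' case is not resolved, and the Helly/tree lemma you invoke does not by itself do the work. The alternation that witnesses non-indiscernibility of $I_l$ over $b$ typically comes from a formula $R_n(a^l_i, b, \bar c)$ where the cycle parameters $\bar c$ themselves vary along the sequence (this is essentially forced once the sequence is non-constant and the theory is not stable over the pure graph). For such a sequence $(c_j)_j$ of cycles, the paper's Lemma \ref{indiscernible cycles} gives a trichotomy — the cycles form a nest, or they share a common pair of poles and decompose into finitely many ``cages'' with disjoint interiors, or they pairwise meet in exactly two vertices with one of the resulting sides disjoint from the others — and it is this structure, not a generic tree-like hierarchy, that bounds the alternations to at most one per parameter. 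Nothing in your sketch isolates this, and ``stabilize the combinatorial location of $b$ relative to the moving cycle'' is exactly what requires the cage/nest machinery; you cannot deduce it from an abstract non-crossing principle because the cycles accumulate around the poles/nodes and those are the only invariant objects available.

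A second gap is the reduction to single vertices. The relations $R_n$ do not split coordinatewise: a single atom $R_n(x_1, b, x_2, \dots, x_{n+1})$ can involve several coordinates of the tuple $a_i$ at once, and Ramsey thinning does not eliminate this. The paper instead defines the envelope of a tuple sequence as the union of the envelopes of the non-constant coordinate sequences and then proves directly (Lemma \ref{criterion indiscernibility}) that indiscernibility over $B$ is equivalent to $B$ avoiding that union; this requires constructing an explicit automorphism that is the identity outside the envelope and permutes the sequence inside it, which is a substantive step your proposal does not address. Finally, the claim of ``ultimately reducing dp-minimality of $T$ to that of a tree order'' is unsubstantiated: even modulo the geometry, there is no interpretation of $T$ in a tree order here, and the paper's proof proceeds by establishing the disjointness of envelopes of mutually indiscernible sequences (Lemma \ref{mutually indiscernible}), which is a statement internal to the planar graph and is itself a delicate case analysis over cages, nests, and the free case via $nod(I)$. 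In short, the geometric picture you describe is right, but without the envelope construction and the classification of indiscernible cycle families, the argument does not go through.
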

  
  Note that the theory of a pure planar graph $G$ is known to be stable. In fact \cite{bobkov2017computations} shows that much more in general the theory of any superflat graph is dp-minimal. Superflat graphs are also long known to be stable (see \cite{podewski1978stable}). The structure $G^{\iota}$ in generally is not.
  
  
\section{Basic observations and definitions}
  \newcommand{\side}[0]{side} 
  
  From now on we will work in the monster model $\mathbb{G}^{\iota}$ of $G^{\iota}$, the elements of whose universe we will keep referring to as vertices. We will refer to a tuple $(c_{1},c_{2}\dots c_{n})$, possibly with repetitions, read along a simple path in the graph as an $n$ cycle or simply a cycle. If we drop the condition that $c_{1}$ and $c_{n}$ and connected we obtain what we will call a path (so by a path we will always intend a simple path). By a subpath of a path (cycle) we will intend some subsequence (of some cyclic permutation) of it.
  Given an $n$-cycle $c$ and $a,b$ outside of $c$  we say that $c$ does not separate $a$ and $b$ if $R_{n}(a,b,c)$ holds. We abbreviate this with the expression $a\sim_{c} b$. We also write $\ppb{c}{a}:=\{b\,|\,a\sim_{c}b\}$ and
  $\nnb{c}{a}:=\{b\,|\,a\nsim_{c}b\}$.

  Let us single out some basic properties encoded in $T$ resulting from the definition of $G^{\iota}$:
  \enum{(A)}{
  \item \label{propA} If $a_{1},a_{2}\cdots a_{m}$ is a path and
  $a_{1}\nsim_{c} a_{m}$ for some cycle $c$, then there are $j<j'\in\{1,2,\dots n\}$ such that $j'-j>1$,
  $a_{j},a_{j'}\nin c$, $a_{j}\nsim_{c} a_{j'}$, and $a_{j+1}, a_{j+2},\dots a_{j'-1}$ is a subpath of $c$.
  \item \label{propB} Given two cycles $c_{1},c_{2}$ such that $c_{1}\setminus(c_{1}\cap c_{2})$ lies on a single side of $c_{2}$, then the same is true if we exchange the roles of $c_{1}$ and $c_{2}$. If this is not the case, then we say that $c_{1}$ and $c_{2}$ \emph{cross}.
  
  \item \label{propC}If $c_{1}$ and $c_{2}$ do not cross, then $\sim_{c_{1},c_{2}}$ divides $U\setminus c_{1}\cup c_{2}$ in at most three classes, i.e. there is one side of $\gamma_{1}$ and another of $\gamma_{2}$ which are nested.
  }
  \begin{remark}
  \label{non crossing}By \ref{propA} if the intersection of $c_{1}$ and $c_{2}$ is a single path (in particular, if it contains a single vertex), then $c_{2}$ and $c_{2}$  cannot cross.
  \end{remark}
  \newcommand{\dk}[1]{C_{#1}}
  \newcommand{\cdk}[1]{\bar{C}_{#1}}
  \newcommand{\C}[0]{\mathcal{C}}
  
  \newcommand{\cc}[0]{\mathring{\gamma}}
  \newcommand{\rr}[0]{\mathring{\rho}}
  \newcommand{\nn}[0]{\mathring{\nu}}
  \newcommand{\dd}[0]{\mathring{\delta}}
  \newcommand{\ee}[0]{\mathring{\epsilon}}
  
  \newcommand{\D}[0]{\mathcal{D}}
  \newcommand{\E}[0]{\mathcal{E}}
  
  \subsection*{Cages and nests}
    
    By a \emph{cage} $\mathcal{C}$ we intend a collection $(\gamma_{j})_{j\in J}$, $|J|\geq 3$ of simple paths of bounded length between two distinct fixed vertices $u$ and $v$, indexed by a linearly ordered set $J$ and such that the following properties are satisfied:
    \begin{itemize}
    \item $\gamma_{i}\cap\gamma_{j}=\{u,v\}$ for $i\neq j$ in
    $J$.
    \item  $\gamma_{i}\ast\gamma_{k}^{-1}$  separates $\gamma_{l}\setminus\{u,v\}$ from $\gamma_{j}\setminus\{u,v\}$ if and only if up to exchanging $j$ and $l$ and exchanging $i$ and $k$ either $i<j<k<l$ or $l<i<j<k$ .
    \end{itemize}

    Let $\mathcal{C}=(\gamma_{j})_{j\in J}$ be a cage. We will write $pol(\C)=\{u,v\}$, $\cc_{j}=\gamma_{j}\setminus\{u,v\}$ and from this point on use the convention $\gamma_{i,k}$ to denote the path $\gamma_{i}\ast\gamma_{k}^{-1}$.
    
    We will also write $C_{j,k}=C_{k,j}=\ppb{\gamma_{j,k}}{d}$, where $d$ is some vertex of $\cc_{l}$ for some $l$ between
    $j$ and $k$, or $D^{-}_{\gamma_{j,k}}(d)$ where $d$ is some vertex of $\cc_{l}$ for some $l\nin\{j,k\}$ that is not between $j$ and $k$. It is easy to check that the definition does not depend on the choice of $d$.
    Notice that $C_{j,k}\subseteq C_{j',k'}$ for ordered pairs of indices $(j,k),(j',k')\in J^{2}$ if and only if
    $j'\leq j$ and $k\leq k'$.
    

    By a \emph{nest} $\mathcal{N}$ we mean a sequence $(\nu_{j})_{j\in J}$ of simple cycles of bounded length that are either disjoint
    or intersect pair-wise in a unique vertex $v$ and such that for $i<j<k$ in $J$
    the cycle $\nu_{j}$ separates $\nu_{k}\setminus\nu_{j}$ from $\nu_{i}\setminus\nu_{j}$.

    Let $pol(\mathcal{N})$ be equal to $\emptyset$ in the first case or $\{v\}$ in the second. Write $\nn_{j}=\nu_{j}\setminus pol(\mathcal{N})$.
    Now, for some suitable choice of a side $N_{j}$ of $\nu_{j}$ we have $N_{j}\subset  N_{j'}$ for $j< j'$. In analogy with the previous definition, for $j<j'\in J$
    we write $N_{j,j'}:=N_{j'}\setminus(\nu_{j}\cup N_{j})$, $int(\mathcal{N})=\bigcup_{j<j'}N_{j,j'}$.
    
    The following observation is a consequence of (\ref{propB}):
    \begin{observation}
    \label{order property}
    Given an unordered collection $\{\gamma_{j}\}_{j\in J}$ of size at least $3$ of paths of bounded length between vertices (possibly equal) vertices $v$ and $w$  whose mutual intersection is $\{v,w\}$ there is a linear order on $J$, unique up to inversion and cyclic permutation in case $v\neq v$ making $\{\gamma_{j}\}_{j\in J}$ into a cage in case $v\neq w$ or a nest in case $v=w$.
    \end{observation}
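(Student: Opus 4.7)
The plan is to extract the cyclic (resp.\ linear) order directly from the separation predicates, treating properties (\ref{propA})--(\ref{propC}) as combinatorial surrogates for planar topology. In the cage case, for distinct $i, k \in J$, the composition $\gamma_{i,k}$ is a simple cycle since $\gamma_i \cap \gamma_k = \{u,v\}$. I first show that for any third $l$, $\cc_l$ lies on a single side of $\gamma_{i,k}$: otherwise property (\ref{propA}) applied to a subpath of $\gamma_l$ between two vertices on opposite sides would force an intermediate vertex of $\gamma_l$ onto $\gamma_{i,k}$, contradicting $\gamma_l \cap \gamma_{i,k} = \{u,v\}$ (since $\cc_l$ avoids $u, v$).

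With sides thus well-defined, set ``$\{i,k\}$ separates $\{j,l\}$'' iff the four indices are distinct and $\cc_j, \cc_l$ lie on different sides of $\gamma_{i,k}$. The observation reduces to showing this is a cyclic separation relation in the classical combinatorial sense, since any such determines a cyclic order on $J$ unique up to cyclic permutation and inversion, and the cage axioms are then a direct rephrasing. The central local input is: for distinct $i, j, k$, the cycles $\gamma_{i,j}$ and $\gamma_{j,k}$ share the path $\gamma_j$, hence by Remark \ref{non crossing} they do not cross; property (\ref{propC}) then partitions the complement of the theta graph $\gamma_i \cup \gamma_j \cup \gamma_k$ into exactly three $\sim_{\gamma_{i,j},\gamma_{j,k}}$-classes, corresponding to the three faces. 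For any fourth path $\gamma_l$, the unique face containing $\cc_l$ pins down the cyclic arrangement of $\{i, j, k, l\}$.

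The nest case is analogous and simpler: each $\nu_j$ itself separates the sphere, and (\ref{propA}) places each $\nn_k$ on a single side; invoking (\ref{propC}) pairwise then shows the cycles are linearly nested, yielding a linear order unique up to inversion (only the overall direction of nesting is free). The main technical work in both settings lies in the global consistency step---verifying that the local four-point data glue coherently into a genuine cyclic (resp.\ linear) order, or equivalently, that the relation defined above satisfies the classical axioms of cyclic (resp.\ linear) separation. This is a finite case analysis, routine but somewhat lengthy, once the concrete content of each ``side'' via (\ref{propA})--(\ref{propC}) is made explicit.
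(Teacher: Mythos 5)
Your outline of the local analysis via theta graphs is sound (establishing single-sidedness of $\cc_l$ from (\ref{propA}), non-crossing of $\gamma_{i,j}$ and $\gamma_{j,k}$ from \cref{non crossing}, and the three-class structure from (\ref{propC})), but you have bypassed property (\ref{propB}) entirely, and the paper explicitly flags the observation as a consequence of (\ref{propB}). The omission is not cosmetic: the ``global consistency step'' that you defer as a routine finite case analysis requires, as one of the axioms of a cyclic separation relation, the symmetry property that if $\{i,k\}$ separates $\{j,l\}$ then $\{j,l\}$ separates $\{i,k\}$. Your theta-graph analysis of $\gamma_i\cup\gamma_j\cup\gamma_k$ tells you which of the three faces $\cc_l$ lies in, but it does not by itself tell you where $\cc_i$ and $\cc_k$ sit relative to the cycle $\gamma_{j,l}$. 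That information is exactly what (\ref{propB}) supplies: the cycles $\gamma_{i,k}$ and $\gamma_{j,l}$ intersect only in $\{u,v\}$ (not in a path, so \cref{non crossing} does not apply), and after you have used (\ref{propA}) to place each $\cc_j$, $\cc_l$ on a single well-defined side of $\gamma_{i,k}$, the contrapositive of (\ref{propB}) converts ``$\cc_j$ and $\cc_l$ lie on opposite sides of $\gamma_{i,k}$'' into ``$\cc_i$ and $\cc_k$ lie on opposite sides of $\gamma_{j,l}$.'' Without this step the four-point data you extract from overlapping triples need not glue into a cyclic order. A secondary, minor point: (\ref{propC}) only gives \emph{at most} three classes for the complement of the theta graph; you do not need exactly three, only that $\cc_l$ falls into a well-defined one, so you should weaken that claim. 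With (\ref{propB}) added to give the separation symmetry and (\ref{propA}), (\ref{propC}) used as you propose, the argument goes through.
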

    
    Given a cage or nest $\mathcal{D}$, we will refer to the set $int(\mathcal{D}):=\bigcup_{j\neq k}D_{j,k}$ as the \emph{interior} of $\mathcal{D}$ and we will write $\ci(\mathcal{D}):=int(\mathcal{D})\cup pol(\mathcal{D})$. The complement of $int(\mathcal{D})$ will be called the \emph{exterior} of $\mathcal{D}$ and denoted by $ext(\mathcal{D})$.
    
    \begin{definition}
    We say that a sequence of tuples (for instance, a cage or nest) indexed by a linearly ordered set $J$ is \emph{large} if $J$ is dense with no upper or lower bound.
    \end{definition}
    
    \begin{definition}
    We say that a sequence of elements $(a_{i})_{i\in I}$, indexed by a linear order of size at least $3$
    \emph{fits into} a cage or nest $\mathcal{D}=(\delta_{j})_{j\in J}$ if $\{a_{i}\}$ does not intersect any $\delta_{j}$ and
    \begin{itemize}
    \item for any $i_{0}<i_{1}<i_{2}$ in $I$ there are $j_{0}<j_{1}<j_{2}<j_{3}$ in $J$ such that $C_{j_{l},j_{l+1}}\cap\{a_{i_{0}},a_{i_{1}},a_{i_{2}},a_{i_{3}}\}=\{a_{i_{l}}\}$ for $i=0,1,2$
    \item  for any $j<k$ in $J$ there is $i\in I$ with $a_{i}\in C_{j.k}$.
    \end{itemize}
    \end{definition}
    If no such $\mathcal{D}$ exists we say that $(a_{i})_{i\in I}$ is \emph{free}.
    
    \begin{observation}
    \label{cage trimming}If a large sequence $(a_{i})_{i\in I}$ fits into a cage or nest $\D=(\delta_{l})_{l\in L}$, then there exists $L'\subseteq L$ such that $(a_{i})_{i\in I}$ fits into $\D'=(\delta_{l})_{l\in L'}$ and $\D'$ is large.
    
    If $\mathcal{D}=(\delta_{l})_{l\in L}$ is already large, then for any dense subset $L'\subseteq L$ the sequence $(a_{i})_{i\in I}$ fits into $(\delta_{l})_{l\in L'}$.
    \end{observation}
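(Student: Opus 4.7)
The strategy is to study the order-preserving map $\pi:L\to\hat{I}$ into the Dedekind completion of $I$, sending each $l\in L$ to the cut with lower part
\[
I^{-}_{l}=\{i\in I:a_{i}\in C_{j,l}\text{ for some }j<l\}
\]
and upper part $I^{+}_{l}=I\setminus I^{-}_{l}$, with the analogous definition for nests using the $N_{j,k}$. Using that $C_{j,k}=C_{j,l}\sqcup\mathring{\gamma}_{l}\sqcup C_{l,k}$ for $j<l<k$, the first condition of fitting, and the fact that the $a_{i}$ miss every $\delta_{j}$, one checks that $(I^{-}_{l},I^{+}_{l})$ is a partition of $I$ and that $I^{-}_{l}$ is downward closed. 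The second fitting condition makes $\pi$ strictly order-preserving: for $l<l'$ some $i\in I$ has $a_{i}\in C_{l,l'}$, placing $i\in I^{+}_{l}\cap I^{-}_{l'}$.

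For the first assertion my claim is that $L$ is already large, so one may simply take $L'=L$. Density: given $l_{0}<l_{1}$ in $L$, the second fitting condition yields $i\in I$ with $\pi(l_{0})<i\leq\pi(l_{1})$; density of $I$ in $\hat{I}$ gives $i^{*}_{0}\in I$ with $\pi(l_{0})<i^{*}_{0}<i$, and $I$ being unbounded above gives $i^{*}_{1}\in I$ with $i<i^{*}_{1}$; first fitting applied to $(i^{*}_{0},i,i^{*}_{1})$ then produces $j_{0}<j_{1}<j_{2}<j_{3}$ in $L$ with $i^{*}_{0}<\pi(j_{1})<i$, whence $l_{0}<j_{1}<l_{1}$. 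Unboundedness: given $l\in L$, pick $i\in I$ with $i>\pi(l)$ (using that $I$ has no upper bound) and apply first fitting to any triple with $i$ in the middle, yielding $j_{2}\in L$ with $\pi(j_{2})>i>\pi(l)$; the case of lower bounds is symmetric.

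For the second assertion, the second fitting condition is inherited by any $L'\subseteq L$, since pairs $j<k$ in $L'$ are still pairs in $L$ and $C_{j,k}$ depends only on $\gamma_{j}$ and $\gamma_{k}$. For the first, given $i_{0}<i_{1}<i_{2}$ in $I$, I look for $j'_{0}<j'_{1}<j'_{2}<j'_{3}$ in $L'$ with $\pi(j'_{0})<i_{0}<\pi(j'_{1})<i_{1}<\pi(j'_{2})<i_{2}<\pi(j'_{3})$. To obtain $j'_{1}$, choose $i'\in(i_{0},i_{1})\cap I$ and apply first fitting for $L$ to $(i_{0},i',i_{1})$; the resulting indices include $k^{-}<k^{+}$ in $L$ with $\pi(k^{-}),\pi(k^{+})\in(i_{0},i_{1})$, and density of $L'$ in $L$ supplies $j'_{1}\in L'\cap(k^{-},k^{+})$, for which $\pi(j'_{1})\in(i_{0},i_{1})$. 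The same move yields $j'_{2}$, and $j'_{0},j'_{3}$ come from cofinality of $\pi(L)$ in $\hat{I}$ combined with density of $L'$ in $L$.

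The key subtlety is this last step: density of $L'$ in $L$ does not directly guarantee an element of $L'$ whose $\pi$-image lies in a prescribed $\hat{I}$-interval, so one must invoke first fitting twice to trap $\pi^{-1}((i_{0},i_{1}))$ between two genuine elements of $L$ before applying density. The nest case is strictly parallel with $C_{j,k}$ replaced by $N_{j,k}$.
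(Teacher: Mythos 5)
Your verification of the \emph{second} assertion via the cut map $\pi:L\to\hat{I}$ is essentially sound, including the two-sided trapping step you flag at the end. The \emph{first} assertion, however, is not handled correctly: you claim that $L$ must already be large so that one may simply take $L'=L$, and this is false.

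The error is at the step where you assert $\pi(l_0)<i$ and then invoke density of $I$ in $\hat{I}$ to produce $i^*_0\in I$ with $\pi(l_0)<i^*_0<i$. From $a_i\in C_{l_0,l_1}$ you only get $i\notin I^-_{l_0}$, hence $I^-_{l_0}\subseteq\{j\in I:j<i\}$; this inclusion need not be strict. In other words, $\pi(l_0)$ can be precisely the cut whose upper part has $i$ as its least element, in which case $\pi(l_0)$ and $i$ are adjacent in $\hat{I}$ and the set $\{i^*_0\in I:\pi(l_0)<i^*_0<i\}$ is empty, so density of $I$ gives nothing. When this happens $L$ genuinely need not be dense. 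Concretely, take $I=\mathbb{Q}$ and $L=\mathbb{Q}\cup\{q^*\}$ with $q^*$ a fresh index inserted immediately above $q=1/2$ (so $q$ and $q^*$ are consecutive in $L$), and arrange the cage so that $a_{1/2}$ is the unique element of $\{a_i\}_{i\in I}$ lying in $C_{q,q^*}$ while the remaining $a_i$ interleave the arcs $\delta_l$ in the obvious way on either side of this gap. One checks that $(a_i)_{i\in\mathbb{Q}}$ fits into $\mathcal{D}=(\delta_l)_{l\in L}$, yet $L$ is not dense. The observation still holds for this configuration, but with $L'=\mathbb{Q}\subsetneq L$; this is precisely why the statement asserts only the existence of some $L'\subseteq L$ rather than that $L$ itself is large. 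Repairing your argument therefore requires an actual construction of a large $L'\subseteq L$, for instance by extracting indices from the witnesses that the first fitting condition supplies across triples of $I$, followed by a re-verification of the fitting conditions for the thinned family of the kind you carry out in your second half; as written, that construction is absent.
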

    \begin{proof}
    
    \end{proof}

    \newcommand{\cage}[0]{cage }   
    \newcommand{\con}[0]{\cup}
    
\section{Properties of large cages}
  
  \begin{lemma}
  \label{pole crossing}Let $\mathcal{D}=\{\delta_{j}\}_{j\in J}$ be a large cage or nest. Let $a,b$ be two elements such that either:
  \begin{itemize}
  	\item  $a\in int(\mathcal{D})$ and $b\in ext(\mathcal{D})$ 
  	\item  $a\in\delta_{i}$ and $b\in\delta_{j}$ for distinct $i,j$  
  \end{itemize}
  Then every path from $a$ to $b$ must pass through $pol(\C)$.
  \end{lemma}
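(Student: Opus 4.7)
The approach is by contradiction. I suppose that $\pi$ is a path from $a$ to $b$ whose vertices all avoid $pol(\mathcal{D})$, and I look for a single cycle $c$ drawn from the structure of $\mathcal{D}$ such that $\pi\cap c=\emptyset$ while $a\nsim_{c} b$; property (\ref{propA}) applied to $\pi$ and $c$ then forces an interior segment of $\pi$ to be a subpath of $c$, contradicting disjointness.

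The central finiteness observation is that outside $pol(\mathcal{D})$ the members of $\mathcal{D}$ are pairwise disjoint, so each vertex of $\pi$ lies on at most one $\delta_j$; hence $\pi$ meets only finitely many $\delta_j$. Largeness of $J$ (density with no endpoints) then guarantees that in any cofinal, coinitial, or intermediate region of $J$ I can find an index whose $\delta_j$ misses $\pi$ entirely. This is the only place where largeness is used.

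I then split by cases. In the cage case with $a\in\cc_i$, $b\in\cc_j$, $i<j$, I pick $p<i$ and $i<q<j$ with $\gamma_p,\gamma_q$ avoiding $\pi$; since $i\in(p,q)$ and $j\notin(p,q)$, $a\in\cc_i\subseteq C_{p,q}$ while $b\in\cc_j$ lies on the opposite side of $\gamma_{p,q}$. In the cage case with $a\in int(\mathcal{C})$, $b\in ext(\mathcal{C})$, I fix $j_a<k_a$ with $a\in C_{j_a,k_a}$ and take $p<j_a$, $q>k_a$ with $\gamma_p,\gamma_q$ avoiding $\pi$; the monotonicity $C_{j_a,k_a}\subseteq C_{p,q}$ gives $a\in C_{p,q}$, and the fact that $\cc_l\subseteq int(\mathcal{C})$ for every $l$ (immediate from largeness) forces $b\in ext(\mathcal{C})\setminus pol(\mathcal{C})$ to be off every $\gamma_l$, hence on the opposite side of $\gamma_{p,q}$. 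The two nest subcases use a single $\nu_p$ analogously: for $a\in\cc_i$, $b\in\cc_j$, $i<j$, pick $i<p<j$ so that $a\in N_p$ and $b\notin N_p$; for $a\in int(\mathcal{N})$, $b\in ext(\mathcal{N})$, observe that $b$ lies either in every $N_q$ or in no $N_q$ (since $b$ is off every $\nu_q$ and has no transition), and choose $p$ just above or just below the transition interval of $a$ accordingly.

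The only bookkeeping issue, which I do not anticipate as a serious obstacle, is to verify in each case that neither endpoint lies on the chosen cycle itself, so that $a\nsim_{c} b$ actually makes sense; this is precisely what the largeness-based index selection secures, and once it is in place the contradiction with (\ref{propA}) is immediate.
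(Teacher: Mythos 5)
Your argument is correct and is the paper's proof, dualized: the paper observes that infinitely many separating cycles of $\mathcal{D}$, pairwise disjoint off $pol(\mathcal{D})$, would each have to meet the finite path $p$ at distinct vertices, forcing $p$ to be infinite, whereas you invert this by noting $p$ meets only finitely many $\delta_j$ and so, by largeness, some separating cycle misses $p$ entirely, with property (\ref{propA}) supplying the final contradiction. The underlying geometric observation — pairwise disjointness of the members of $\mathcal{D}$ outside the poles set against the finiteness of $p$ — is identical, and your explicit invocation of (\ref{propA}) only makes precise the step the paper leaves implicit in asserting that $p$ must intersect each separating cycle.
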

  \begin{proof}
  Let $p$ be a path from $a$ to $b$ and assume without loss of generality that $p$ does not cross $pol(\D)$. Now, there exist an infinity of cycles with pair-wise intersection equal to $pol(\D)$ of the form $\delta_{k,l}$ separating
  $a$ from $b$. Since $p$ must intersect either $\dd_{k}$ or $\dd_{l}$ in each of those cases implies $p$ has to contain infinitely many vertices. This settles the first claim.
  \end{proof}
  
  \begin{corollary}
  \label{cycles and cages} Let $\mathcal{C}=\{\gamma_{j}\}_{j\in J}$ be a large cage and $c$ a cycle. Then one of the following holds:
  \begin{enumerate}
  	\item \label{o1}$c\subseteq ext(\C)$
  	\item \label{o2}$c\subseteq \ci(\C)$
  	\item \label{o3}$c$ is a concatenation $d*e$, where $e$ and $d$ are paths between the two poles of $\C$, with $d\subseteq ext(\C)$ and $e\subseteq\ci(\C)$.
  	If $pol(\C)\nsubseteq c$, then there are $c$ can intersect $\cc_{j}$ for at most one value of $j$.
  \end{enumerate}
  \end{corollary}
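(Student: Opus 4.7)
The plan is to apply Lemma \ref{pole crossing} to each maximal sub-path of $c$ disjoint from $pol(\C)$. Set $P := c \cap pol(\C)$; the connected components of $c \setminus P$ are paths avoiding $pol(\C)$, so by that lemma each such component must lie entirely in $int(\C)$ or entirely in $ext(\C) \setminus pol(\C)$, since a single path simultaneously containing a vertex of $int(\C)$ and one of $ext(\C)$ would be forced to pass through a pole.

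One then case-splits on $|P|$. If $|P| \leq 1$, then $c \setminus P$ has a single component and so $c$ itself lies entirely in $\ci(\C)$ or entirely in $ext(\C)$, yielding alternative (\ref{o1}) or (\ref{o2}). If $|P| = 2$, so $pol(\C) \subseteq c$, then $c$ decomposes as a concatenation $d \ast e$ of two internally disjoint paths between $u$ and $v$; each of $d$ and $e$ lies in $\ci(\C)$ or in $ext(\C)$ by the previous paragraph, and up to swapping labels the three combinations produce alternatives (\ref{o1}), (\ref{o2}) and (\ref{o3}) respectively.

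For the final assertion, suppose towards a contradiction that $pol(\C) \not\subseteq c$ and that $c$ meets $\cc_{j_1}$ at some $a$ and $\cc_{j_2}$ at some $b$ with $j_1 \neq j_2$. Then both arcs of $c$ from $a$ to $b$ are paths between vertices on distinct cage paths, so by Lemma \ref{pole crossing} both would have to pass through $pol(\C)$; but the two arcs are internally disjoint, and since $c$ contains at most one pole, at most one of them can meet $pol(\C)$, a contradiction. The main obstacle is essentially bookkeeping: once Lemma \ref{pole crossing} is in hand, everything reduces to an elementary case analysis driven by how $c$ meets $pol(\C)$, with the only mild care needed in accounting for the pole vertices when they happen to lie on the cycle.
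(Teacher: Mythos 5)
Your proof is correct and follows essentially the same route as the paper: both rest on \cref{pole crossing}, the observation that a path joining the interior of the cage to its exterior (or joining two distinct cage arcs) must pass through a pole, and then trace what this forces about the components of $c$ after deleting $c\cap pol(\C)$. Your write-up is more explicit than the paper's one-line proof — in particular you spell out the argument for the final assertion about $c$ meeting at most one $\cc_j$, which the paper leaves implicit — but the method is the same.
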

  \begin{proof}
  Indeed, if $c$ contains points both in $int(\C)$ and $\ci(\C)^{c}$, then by the previous Lemma it must intersect $pol(\C)$ two times and can only exit $int(\C)$ once.
  \end{proof}
  
  A similar argument shows:
  \begin{lemma}
  \label{cycles and nests}If $\mathcal{N}=(\nu_{j})_{j\in J}$ is a large nest, then for any cycle $c$ either $c\subseteq\ci(\mathcal{N})$ or $c\subseteq ext(\mathcal{N})$. In the former case $c$ can intersect $\nn_{j}$ for at most one value of $j$.
  \end{lemma}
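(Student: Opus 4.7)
The plan is to mimic the proof of \cref{cycles and cages}, exploiting the fact that a nest has at most one pole. This extra constraint rules out the possibility that a cycle could decompose as a concatenation of two arcs traveling between two distinct poles, so the trichotomy of the cage case collapses to a genuine dichotomy.

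For the first assertion, I would argue by contradiction: assume $c$ contains both a vertex $a \in int(\mathcal{N})$ and a vertex $b \notin \ci(\mathcal{N})$. Since $b \notin int(\mathcal{N}) \cup pol(\mathcal{N})$, the first case of \cref{pole crossing} applies, and every path from $a$ to $b$ must pass through $pol(\mathcal{N})$. I would then decompose $c$ at its occurrences of $a$ and $b$ into two arcs; each arc is a path from $a$ to $b$ and hence must contain a vertex of $pol(\mathcal{N})$. If $pol(\mathcal{N}) = \emptyset$ this is already an immediate contradiction; otherwise $pol(\mathcal{N}) = \{v\}$, and one checks $v \neq a$ (since $a \in int(\mathcal{N})$ while $v$ lies on every $\nu_j$, and the sides $N_j$ that make up $int(\mathcal{N})$ exclude the defining cycle) and $v \neq b$ (since $b \notin pol(\mathcal{N})$). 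Hence $v$ appears in the interior of each of the two arcs, violating the fact that $c$ is a simple cycle (only cyclically contiguous repetitions are allowed).

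For the trailing assertion, suppose $c \subseteq \ci(\mathcal{N})$ meets $\nn_{j_1}$ and $\nn_{j_2}$ for distinct $j_1, j_2 \in J$. Picking $a \in c \cap \nn_{j_1}$ and $b \in c \cap \nn_{j_2}$, the second case of \cref{pole crossing} yields that every path between $a$ and $b$ contains $pol(\mathcal{N})$, and the same arc-decomposition argument delivers a contradiction; here $a, b \notin pol(\mathcal{N})$ holds automatically because $\nn_j = \nu_j \setminus pol(\mathcal{N})$.

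I do not expect a serious obstacle. The only bookkeeping point worth flagging is that when $pol(\mathcal{N}) = \{v\}$ one must verify $v \notin int(\mathcal{N})$ and $v \notin \bigcup_j \nn_j$, both of which follow directly from the definitions; this is what keeps $v$ genuinely distinct from $a$ and $b$ and lets the single available pole be overcharged by the two arcs.
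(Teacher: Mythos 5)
Your proof is correct and coincides with the argument the paper intends: the paper's one-line justification is that the nest case follows ``by a similar argument'' to Corollary \ref{cycles and cages}, whose proof decomposes the cycle $c$ at $a$ and $b$ into two arcs, applies Lemma \ref{pole crossing} to force each arc through the pole set, and then invokes simplicity of $c$. Your observation that $|pol(\mathcal{N})|\le 1$ for a nest --- so that the two arcs would overload the single available pole, or there is no pole at all --- is exactly why the cage trichotomy collapses to the stated dichotomy.
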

  
  \begin{lemma}
  \label{separation in cages} Suppose that $c$ is a cycle and $(a_{i})_{i\in I}$ a large sequence disjoint from $c$ and fitting into a large cage or nest $\mathcal{D}=(\delta_{l})_{l\in L}$. If we partition $I$ into two sets according to the side of $c$ in which $a_{i}$ lies the resulting partition contains a subinterval $J$ of $I$. Moreover:
  \begin{itemize}
  	\item If $\D$ is a nest and $J$ is bounded on both sides or if $pol(\D)\nsubseteq c$, then $J$ is empty or a singleton.
  	\item If $\mathcal{D}$ is a cage and $c$ is as in case \ref{o3} of Corollary \ref{cycles and cages}, then both $J$ and $I\setminus J$ are unbounded and non-empty intervals.
  \end{itemize}
  %
  \end{lemma}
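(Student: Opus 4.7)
The plan is to carry out a case analysis on the cycle $c$ using Corollary \ref{cycles and cages} (for cages) and Lemma \ref{cycles and nests} (for nests), combined with Lemma \ref{pole crossing} and the fitting data of $(a_i)_{i\in I}$ into $\D$. The two principal situations — cage case \ref{o3} with $c$ passing through both poles, versus the remaining ones — are analyzed separately, and in each of them the size computation will automatically yield the interval property as a by-product.

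For cage case \ref{o3}, with $c = d\ast e$, $d\subseteq ext(\C)$ and $e\subseteq \ci(\C)$: the path $e$ runs between the two poles $u$ and $v$ in $\ci(\C)$, so (after minor surgery to handle any internal graph vertices shared between $e$ and some $\gamma_l$) Observation \ref{order property} applied to $\{\gamma_l\}_{l\in L}\cup\{e\}$ inserts $e$ into the cage ordering at a cut position $l_e$ of $L$. The cycle $c$ then splits the sphere so that the arcs $\gamma_l$ with $l<l_e$ lie on one side and those with $l>l_e$ on the other. The fitting induces an order-preserving ``position'' map $\pi\colon I\to \overline{L}$ with dense image (by the second clause of the fitting definition), so the partition of $I$ is $J=\{i:\pi(a_i)<l_e\}$ versus $I\setminus J=\{i:\pi(a_i)>l_e\}$: an initial and a final segment of $I$, both non-empty and unbounded in the relevant direction thanks to the largeness of $I$ together with the density of $\pi(I)$ in $L$.

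When instead $pol(\D)\not\subseteq c$ (or $\D$ is a nest whose corresponding $J$ is bounded on both sides), Corollary \ref{cycles and cages}/Lemma \ref{cycles and nests} force $c$ either into $ext(\D)$ or into $\ci(\D)$, with $c$ meeting $\dd_{l^*}$ for at most one index $l^*$. The subcase $c\subseteq ext(\D)$ is immediate: $\ci(\D)$ is connected and disjoint from $c$, so every $a_i$ sits on the same side of $c$ and $J=\emptyset$. For $c\subseteq \ci(\D)$, Lemma \ref{pole crossing} rules out $c$ ``encircling'' a pole (otherwise $c$ would have to meet more than one $\dd_l$, contradicting the corollary/lemma), so $c$ bounds an embedded disk $D$ in the sphere (or the sphere punctured at the pole lying on $c$, if any) with $D\cap pol(\D)=\emptyset$. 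To see that $J=\{i:a_i\in D\}$ has at most one element, suppose $a_i,a_{i'}\in D$ with $i<i'$, pick an auxiliary $i''>i'$ in $I$ (available by largeness), and apply the fitting to $(i,i',i'')$ to obtain $j_0<j_1<j_2<j_3$ in $L$ with $a_i\in C_{j_0,j_1}$ and $a_{i'}\in C_{j_1,j_2}$. The connected disk $D$, containing points in the disjoint sectors $C_{j_0,j_1}$ and $C_{j_1,j_2}$, must meet the separating arc $\delta_{j_1}$; but the endpoints of $\delta_{j_1}$ lie in $pol(\D)\subseteq D^c$, so the arc must enter $D$ across the boundary $c=\partial D$, forcing $c\cap \dd_{j_1}\neq\emptyset$. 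Choosing $i''$ so that $j_1\neq l^*$ (which is possible by density of $\pi(I)$ in $L$) gives the desired contradiction, so indeed $|J|\leq 1$.

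The principal obstacle I anticipate is the topological step of producing the embedded disk $D$ and identifying precisely which $\dd_l$ must be crossed by $\partial D$ in the delicate sub-cases where $c$ contains a single pole or meets one $\dd_{l^*}$; this should go through via the Jordan curve theorem on the (possibly punctured) sphere together with the connectedness arguments already used in the proofs of Corollary \ref{cycles and cages} and Lemma \ref{cycles and nests}. A secondary subtle point is the treatment in case \ref{o3} if the $\ci(\C)$-part $e$ of $c$ happens to share internal graph vertices with some $\gamma_l$, which requires a careful adaptation of Observation \ref{order property} and must be addressed before speaking of the cut position $l_e$.
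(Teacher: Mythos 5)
Your argument takes a genuinely different, more topological route than the paper. The paper deduces the interval property and the size bounds from a single combinatorial observation: for four indices $i_0<i_1<i_2<i_3$, the ``zigzag'' pattern ($c$ separating $\{a_{i_0},a_{i_2}\}$ from $\{a_{i_1},a_{i_3}\}$) and the ``split'' pattern ($c$ separating $\{a_{i_0},a_{i_1}\}$ from $\{a_{i_2},a_{i_3}\}$) each force $c$ to cross some $\delta_{l,l'}$ or $\delta_l$, which property (\ref{propB}) and the disjointness of $c$ from $\bigcup\mathcal{D}\setminus pol(\D)$ (secured via Observation \ref{cage trimming}) only permit when $pol(\D)\subseteq c$. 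You instead reason with embedded disks and the Jordan curve theorem directly on the sphere. This is a legitimate direction, and your treatment of case \ref{o3} by inserting $e$ into the cage via Observation \ref{order property} coincides with what the paper does for that subcase, but a few points need attention.

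First, the interval property itself — that one of the two parts of the partition is a sub-interval of $I$ — is claimed to fall out ``as a by-product of the size computation,'' but you never actually rule out the zigzag pattern. Your disk argument only bounds the cardinality of the part of $I$ landing in $D$; it does not, on its own, say that this part is order-convex. The paper's explicit crossing argument for the pattern $\{a_{i_0},a_{i_2}\}$ versus $\{a_{i_1},a_{i_3}\}$ is doing real work here and has no counterpart in your write-up.

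Second, and more seriously, your $|J|\leq 1$ argument silently assumes cage-style topology (the arcs $\delta_{j_1}$ have endpoints at poles lying outside $D$). For a nest with $pol(\D)=\emptyset$, the cycles $\nu_{j}$ are pairwise disjoint and may lie wholly inside $D$, so the step ``the arc must enter $D$ across $\partial D$'' fails; indeed $c$ can split a pole-free nest into two unbounded families, and $|J|\leq 1$ is false without the extra hypothesis that $J$ is bounded on both sides. Your argument never invokes that hypothesis, which means it ``proves'' something false in that subcase. The paper handles this by noting that the split pattern forces $\delta_l$ and $\delta_{l'}$ onto distinct sides of $c$ via property (\ref{propC}), concluding both halves are unbounded — a conclusion compatible with your disk picture but incompatible with $|J|\leq 1$, so some part of your chain breaks. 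Finally, the claim that one can ``choose $i''$ so that $j_1\neq l^*$'' is not justified: the fitting condition produces some $j_0<j_1<j_2<j_3$ for a given triple, and $j_1$ is essentially forced to separate $a_i$ from $a_{i'}$; varying $i''$ need not move $j_1$. The paper sidesteps this entirely by first trimming $\mathcal{D}$ (Observation \ref{cage trimming}) so that $c$ is disjoint from all the $\dd_l$, removing the special index $l^*$ from the picture before any separation argument begins — you should adopt that reduction.
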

  \begin{proof}
  By Observation \ref{cage trimming} we may assume $\bigcup\mathcal{D}\setminus pol(D)$ is disjoint from $c$.
  
  Consider first the case in which $\D$ is a cage and $pol(\mathcal{D})\subset c$ and. Adding the one or two subpaths of $c$ that lie in $\ci(\mathcal{D})$ to $\D$ we obtain a new cage by Observation \ref{order property} and the result easily follows.
  
  Take $i_{0}<i_{1}<i_{2}< i_{3}$ in $I$. If $c$ separates $\{a_{i_{0}},a_{i_{2}}\}$ from $\{a_{i_{1}},a_{i_{3}}\}$.
  Then some cycle of the form $\delta_{l}$ (if $\D$ is a nest) or $\delta_{l,l'}$ (if $\D$ is a cage) separates $\{a_{i_{0}},a_{i_{1}}\}$ from $\{a_{i_{2}},a_{i_{3}}\}$ and thus must cross $c$. This impossible unless $\D$ is a cage and $pol(\D)\subseteq c$ in which case it is ruled out by the argument above.
  
  Suppose now that $c$ separates $\{a_{i_{0}},a_{i_{1}}\}$ from $\{a_{i_{2}},a_{i_{3}}\}$.
  
  If $\D$ a cage then there are $l<l'$ in $L$ such that $\delta_{l,l'}$ separates $\{a_{i_{1}},a_{i_{2}}\}$ from $\{a_{i_{0}},a_{i_{3}}\}$, which implies that $c$ and $\delta_{l,l'}$ cross, and thus $pol(\D)\subseteq c$.
  
  If $\mathcal{D}$ is a nest there are distinct $l<l'$ such that
  $\delta_{l}$ separates $a_{i_{0}}$ from $a_{i_{1}},a_{i_{2}},a_{i_{3}}$ and $\delta_{l'}$ separates $a_{i_{0}},a_{i_{1}},a_{i_{2}}$ from $a_{i_{3}}$.
  It follows from property \ref{propC} that $\delta_{l}$ and $\delta_{l'}$ must lie in distinct sides of $c$. This implies that $pol(\D)\subseteq c$ and both $J$ and $I\setminus J$ are unbounded (each of the two sides of $c$ contains one side of either $\delta_{l}$ or $\delta_{l'}$).

  \end{proof}
  
  %
  %

  \begin{lemma}
  \label{cage compatibility}Let each of $\D=(\delta_{j})_{j\in J}$ and $\E=(\epsilon_{k})_{k\in K}$ be either a large cage or a large nest with a pole. Then either $pol(\E)\subseteq\ci(\D)$ or $pol(\E)\subseteq ext(\D)$. Moreover if
  $pol(\D)\neq pol(\E)$ then one of the following mutually exclusive possibilities hold:
  \begin{itemize}
  	\item $int(\D)\cap int(\E)=\emptyset$
  	\item up to exchanging $\D$ and $\E$
  	there exist $j,k\in J$ such that $\ci(\D)\setminus pol(\E)\subset E_{j,k}$.
  \end{itemize}

  \end{lemma}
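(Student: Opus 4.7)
The plan is to prove the pole-dichotomy first, then bootstrap it to the statement about interiors using Lemma \ref{separation in cages}.

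For the pole dichotomy, the nest case is immediate since $pol(\E)$ is a single vertex and $V=\ci(\D)\cup ext(\D)$. Suppose then that $\E$ is a cage with $pol(\E)=\{u,v\}$, and toward a contradiction that $u\in int(\D)$ while $v\in ext(\D)\setminus pol(\D)$ (the other asymmetric case is analogous). By Lemma \ref{pole crossing}, every path $\epsilon_{k}$, $k\in K$, must visit $pol(\D)$. But the paths $\epsilon_{k}$ are pairwise disjoint outside of $\{u,v\}$, a set disjoint from $pol(\D)$ by hypothesis; hence each vertex of $pol(\D)$ belongs to at most one $\epsilon_{k}$. Since $|pol(\D)|\leq 2<3\leq |K|$, this is impossible.

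For the second claim, assume $pol(\D)\neq pol(\E)$; by symmetry the first part also gives $pol(\D)\subseteq\ci(\E)$ or $pol(\D)\subseteq ext(\E)$. Pick a large sequence $(a_{i})_{i\in I}$ fitting into $\D$, available because $\D$ is large; if $int(\D)\cap int(\E)=\emptyset$ the first alternative holds, so suppose otherwise and use Observation \ref{cage trimming} to arrange that some $a_{i_{0}}\in int(\E)$, say $a_{i_{0}}\in E_{l_{0},l_{0}'}$. Now apply Lemma \ref{separation in cages} to $(a_{i})_{i\in I}$ against each cycle $c$ of $\E$ (of the form $\epsilon_{l,l'}$ in the cage case, $\nu_{l}$ in the nest case). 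The key point, established by a short case analysis using the pole-dichotomy together with $pol(\D)\neq pol(\E)$, is that $pol(\D)\not\subseteq c$ for every such $c$: otherwise both poles of $\D$ would sit on a single $\epsilon_{k}$ (or a short concatenation), and repeating the counting argument from the first claim with the $\gamma_{j}$'s of $\D$ in place of $\epsilon_{k}$'s rules this out. Hence Lemma \ref{separation in cages} gives the strong conclusion: the partition of $I$ induced by the sides of $c$ has an empty or singleton minority, so cofinitely many $a_{i}$ lie on the same side of $c$ as $a_{i_{0}}$.

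Intersecting over the relevant cycles of $\E$ and using the nested structure of the $E_{l,l'}$, this localizes all but finitely many $a_{i}$ inside a common smallest region $E_{l^{*},l'^{*}}$, and trimming $I$ further eliminates the exceptions. The second ``fits into'' condition — every $D_{j,k}$ contains some $a_{i}$ — then propagates the inclusion from the sequence to all of $int(\D)$: a putative vertex $w\in int(\D)\setminus E_{l^{*},l'^{*}}$ would share its $\E$-side with some $a_{i}$ (via a $\D$-cycle witnessing $w\in D_{j,k}$), contradicting the previous step. Finally $pol(\D)\setminus pol(\E)\subseteq\ci(\E)\setminus pol(\E)=int(\E)$ by the pole-dichotomy, and a parallel application of Lemma \ref{separation in cages} with the roles of $\D$ and $\E$ swapped places it in some $E_{l^{**},l'^{**}}\supseteq E_{l^{*},l'^{*}}$; this gives $\ci(\D)\setminus pol(\E)\subseteq E_{l^{**},l'^{**}}$ as required, and the two alternatives are plainly mutually exclusive. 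The main obstacle is the bookkeeping of the pole case analysis needed to guarantee $pol(\D)\not\subseteq c$ for $\E$-cycles $c$; this is the one place where $pol(\D)\neq pol(\E)$ is genuinely used, and without it the singleton conclusion of Lemma \ref{separation in cages} would fail and the interiors could interleave.
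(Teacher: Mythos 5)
Your pole dichotomy is essentially the paper's argument: if one pole of $\E$ lies in $int(\D)$ and the other in $ext(\D)\setminus pol(\D)$, Lemma \ref{pole crossing} forces every $\epsilon_k$ through $pol(\D)$, and a pigeonhole on $|pol(\D)|\leq 2$ against infinitely many $\epsilon_k$, pairwise disjoint off $pol(\E)$, yields the contradiction. One small imprecision: the disjointness $pol(\E)\cap pol(\D)=\emptyset$ that your pigeonhole needs is a consequence of the case assumption (both $int(\D)$ and $ext(\D)\setminus pol(\D)$ miss $pol(\D)$), not a hypothesis of the lemma.

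The second claim is where you genuinely diverge from the paper, and where there is a real gap. The paper argues directly about the cycles $\epsilon_{j',l'}$ (resp.\ $\nu_{j'}$) via Corollary \ref{cycles and cages} and Lemma \ref{cycles and nests}, splitting on whether $pol(\E)\subseteq ext(\D)$ or $pol(\E)\subseteq\ci(\D)$; you instead dualize through a fitting sequence and Lemma \ref{separation in cages}. That route is plausible, but your pivotal assertion ``$pol(\D)\not\subseteq c$ for every $\E$-cycle $c$'' does not follow from $pol(\D)\neq pol(\E)$. The hypothesis allows a shared pole, say $pol(\D)=\{u,v\}$ and $pol(\E)=\{u,w\}$; if $v\in\ee_{l_0}$ then $pol(\D)\subseteq\epsilon_{l_0,l'}$ for every $l'$. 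And the counting argument you invoke does not discharge this configuration: the $\delta_j$ run from $v\in int(\E)$ to $u\in pol(\E)$, so Lemma \ref{pole crossing} is satisfied trivially at the endpoint $u$ and supplies no vertex to pigeonhole. The same issue arises when both poles of $\D$ sit on a single arc $\ee_{l_0}$ with $pol(\D)\cap pol(\E)=\emptyset$: the $\delta_j$ then join two points of $int(\E)$ and need never meet $pol(\E)$ at all. You would need to treat these cases separately, e.g.\ by discarding $\epsilon_{l_0}$ from $\E$ and checking that the remaining cycles suffice. A second, smaller gap: ``use Observation \ref{cage trimming} to arrange that some $a_{i_0}\in int(\E)$'' is unsupported. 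A witness of $int(\D)\cap int(\E)\neq\emptyset$ lies in some $D_{j,k}$, and fitting gives some $a_i\in D_{j,k}$, but nothing yet places $D_{j,k}$ inside $int(\E)$ --- establishing that is essentially what the lemma is asking you to prove.
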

  
  %
  
  \begin{proof}
  
  If $pol(\E)\cap int(\D)$ and $pol(\E)\cap\ci(\D)^{c}$ are both non-empty, then Lemma \ref{pole crossing} implies the existence of $v\in pol(\E)$ contained in $\ee_{i},\ee_{j}$ for different $i\neq j$: a contradiction, hence the first claim.
  
  For the second claim suppose that $|pol(\mathcal{D})|=max\{|pol(\D)|,|pol(\E)|\}$.
  
  Consider first the case in which $pol(\E)\subseteq ext(\D)$.
  Take $j'<k'<l'$ in $K$. In view of Corollary \ref{cycles and cages} we must have $\epsilon_{j'}\cup\epsilon_{k'}\cup \epsilon_{l'}\subseteq ext(\D)$
  and then either $int(\D)\subseteq E_{j',l'}$ or $E_{j',l'}\subseteq ext(\D)$. If the former does not hold for any $j'<l'$, then
  $int(\D)\cap int(\E)=\emptyset$.
  
  Assume now that
  $pol(\E)\subseteq\ci(\D)$. Choose $j_{0}<k_{0}$ in $J$ such that $pol(\E)\setminus pol(\D)\subseteq D_{j_{0},k_{0}}$. We claim that $int(\E)\subseteq D_{j_{0},k_{0}}$. Indeed, otherwise there are infinite sequences $j'_{1}>j'_{2}\dots$ and $k'_{1}<k'_{2}<\dots $ in $J'$ such that   for every $n$ either $\epsilon_{j_{n}}$ or $\epsilon_{k_{n}}$ is not contained in $D_{j_{0},k_{0}}$. Since $pol(\E)$ intersects $D_{j_{0},k_{0}}$, it follows that $\delta_{j_{0}}\cup\delta_{k_{0}}$ and $\epsilon_{j_{n}}\cup\epsilon_{k_{n}}$ must intersect in at least two vertices, one of which must lie outside of $pol(\E)$.
  This implies that one of $\delta_{j_{0}}$ or $\delta_{k_{0}}$ contains an infinite number of distinct vertices: a contradiction.

  \end{proof}

  \newcommand{\sca}[0]{(\gamma_{j})_{j\in J}}
  \newcommand{\scap}[0]{(\beta'_{j})_{j\in J'}}
  \begin{corollary}
  \label{fitting}Let $\mathcal{D}=(\delta_{j})_{j\in J}$ and $\mathcal{D'}=(\delta'_{j})_{j\in J'}$ be either a large cage or a large nest. If a large sequence $(a_{i})_{i\in I}$ fits into both $\mathcal{D}$ and $\mathcal{D}'$, then $\mathcal{D}$ and $\mathcal{D}'$ are either both cages or both nests, $pol(\mathcal{D})=pol(\mathcal{D}')$ and $int(\mathcal{D})=int(\mathcal{D}')$.
  \end{corollary}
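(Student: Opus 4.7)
The plan is to establish, in this order, that $\mathcal{D}$ and $\mathcal{D}'$ have the same type (cage or nest), that $pol(\mathcal{D}) = pol(\mathcal{D}')$, and finally that $int(\mathcal{D}) = int(\mathcal{D}')$. The first two conclusions come mostly for free from Lemma~\ref{cage compatibility}, while the third requires combining several earlier tools.

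For the first two in the case both $\mathcal{D}$ and $\mathcal{D}'$ have poles, I will apply Lemma~\ref{cage compatibility} with $\mathcal{E} = \mathcal{D}'$. The alternative $int(\mathcal{D}) \cap int(\mathcal{D}') = \emptyset$ is excluded because every $a_i$ lies in both interiors by fitting. If the second alternative held, say $\ci(\mathcal{D}) \setminus pol(\mathcal{D}') \subseteq D'_{j_0,k_0}$ up to swapping roles, then every $a_i$ would fall inside $D'_{j_0,k_0}$ (as $(a_{i})$ is disjoint from every cycle of $\mathcal{D}'$, hence from $pol(\mathcal{D}')$); but largeness lets me choose $j_1 < k_1 < j_0$ in $J'$, and then fitting into $\mathcal{D}'$ produces some $a_i \in D'_{j_1,k_1}$, a region disjoint from $D'_{j_0,k_0}$, contradiction. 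Hence $pol(\mathcal{D}) = pol(\mathcal{D}')$, and the cardinality of this common set pins down the type. The remaining case, where one of them is a pole-less nest, I would handle via Lemma~\ref{separation in cages}: because the cycles of a pole-less nest are pairwise disjoint, $pol(\mathcal{D}') \nsubseteq \delta_{j}$ holds for cofinitely many $j$ whenever $\mathcal{D}'$ has a pole. For such $j$ the lemma would force the subinterval $J$ of the partition induced by $\delta_{j}$ to be empty or a singleton, while fitting into $\mathcal{D}$ splits $I$ into two infinite half-lines, contradiction; so $\mathcal{D}'$ is also pole-less.

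For the third conclusion, I will prove $int(\mathcal{D}) \subseteq int(\mathcal{D}')$; the reverse inclusion follows by symmetry. Given $v \in int(\mathcal{D}) \setminus pol(\mathcal{D})$, I would choose $j_0 < k_0$ (or just $j_0$ if $\mathcal{D}$ is a nest) and an $a_i$ with $v$ and $a_i$ in the same elementary region of $\mathcal{D}$, using fitting to produce the $a_i$. The cycle $c$ bounding this region contains $pol(\mathcal{D}) = pol(\mathcal{D}')$, so Corollary~\ref{cycles and cages} or Lemma~\ref{cycles and nests} applied to $c$ against $\mathcal{D}'$ restricts $c$ to possibility (\ref{o2}) or (\ref{o3}). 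In either case Lemma~\ref{separation in cages} should pin down the partition of $(a_i)$ induced by $c$ and force $v$ to lie on the same side of each cycle of $\mathcal{D}'$ as $a_i$, whence $v \in int(\mathcal{D}')$.

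The main obstacle will be this last step, since Lemma~\ref{cage compatibility} is decisive only when $pol(\mathcal{D}) \neq pol(\mathcal{D}')$ and gives little once the poles coincide. I expect to have to combine Corollary~\ref{cycles and cages}, Lemma~\ref{cycles and nests}, and Lemma~\ref{separation in cages} carefully to certify that $v$'s position relative to every cycle of $\mathcal{D}'$ matches that of the chosen $a_i$. The pole-less nest case is a related bookkeeping nuisance throughout, as Lemma~\ref{cage compatibility} does not apply and everything must proceed through Lemma~\ref{separation in cages}.
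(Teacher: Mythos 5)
Your handling of the first two conclusions (same type, same poles) essentially matches the paper's appeal to Lemma~\ref{cage compatibility} and Lemma~\ref{separation in cages}, and your case analysis there is sound. The genuine difference — and the genuine gap — is in the third step.

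You concede that proving $int(\mathcal{D}) = int(\mathcal{D}')$ is the hard part and you only sketch a strategy: put $v$ and some $a_i$ in the ``same elementary region'' of $\mathcal{D}$ and track $v$'s position relative to each cycle of $\mathcal{D}'$. This doesn't go through as stated, because for a \emph{large} cage or nest the index set $J$ is dense, so there are no elementary regions: any $D_{j_0,k_0}$ containing $v$ also contains some $\delta_{j_1}$ with $j_0 < j_1 < k_0$, and fitting only guarantees an $a_i \in D_{j_0,k_0}$, not one on the same side of every intervening $\delta_{j_1}$ as $v$. You would be left having to control the position of $v$ relative to infinitely many $\D'$-cycles one at a time, which Lemma~\ref{separation in cages} alone does not organize. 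The paper sidesteps this entirely by a \emph{merging} argument: after invoking Observation~\ref{cage trimming} to pass to cofinal dense subsets of $J$ and $J'$ so that $\mathring\delta_j$ and $\mathring\delta'_{j'}$ are pairwise disjoint, the union $\{\delta_j\}_{j\in J}\cup\{\delta'_{j'}\}_{j'\in J'}$ is itself a cage or nest $\mathcal{D}''$ over $J'' = J\cup J'$ (by Observation~\ref{order property}, once the poles agree). Then the fitting of $(a_i)$ into both $\mathcal{D}$ and $\mathcal{D}'$ forces $J$ and $J'$ each to be dense and cofinal in $J''$, whence $int(\mathcal{D}) = int(\mathcal{D}'') = int(\mathcal{D}')$ immediately. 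This is both shorter and more robust than the pointwise verification you attempt; you should adopt the merging route, or else supply the missing combinatorial argument that replaces ``elementary regions'' with a workable intersection-tracking scheme in the dense case.
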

  \begin{proof}
  
  From Lemmas \ref{separation in cages} and \ref{cage compatibility} it follows that $pol(\D)=pol(\D')$.
  
  After replacing $J$ and $J'$ with cofinal dense subsets we may
  assume $\dd_{j}$ and $\dd'_{j'}$ are disjoint for any $j\in J$ and $j'\in J'$.
  Using Lemma \ref{separation in cages} in case both $\D$ and $\D'$ are nests we  conclude that we can merge $\D$ and $\D'$ into a larger cage or nest $\mathcal{D}''=(\delta_{j})_{j\in J''}$, $J'=J'\cup J'$. The assumption that $(a_{i})_{i\in I}$ fits into both $\mathcal{D}$ and $\mathcal{D}'$ implies $J$ and $J'$ are both dense and cofinal in $J''$, from which it follows that $int(\D)=int(\D')$.
  \end{proof}

  As a further corollary we get:
  \begin{corollary}
  \label{invariance of cage interior}Let a sequence of elements $(a_{i})_{i\in I}$ fit into a large cage or nest $\D$. Then $int(\mathcal{D})$ and $pol(\mathcal{D})$  are invariant under any automorphism of $\mathbb{G}^{\iota}$ fixing $\{a_{i}\}_{i\in I}$ set-wise.
  \end{corollary}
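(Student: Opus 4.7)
The plan is to reduce the claim to Corollary \ref{fitting} applied to $\D$ and its image $\D':=(\sigma(\delta_{j}))_{j\in J}$. Since the properties defining a large cage or nest (boundedness of path lengths, the pairwise-intersection pattern of the $\delta_{j}$, and largeness of $J$) are uniformly preserved by any automorphism of $\mathbb{G}^{\iota}$, $\D'$ is again a large cage or nest of the same type as $\D$, and $pol(\D')=\sigma(pol(\D))$, $int(\D')=\sigma(int(\D))$ by construction. Hence it suffices to show $pol(\D)=pol(\D')$ and $int(\D)=int(\D')$.

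To invoke Corollary \ref{fitting} we need $(a_{i})_{i\in I}$ to fit into both $\D$ and $\D'$. Fitting into $\D$ is assumed; for $\D'$, pushing the original fitting through $\sigma$ gives that $(\sigma(a_{i}))_{i\in I}$ fits into $\D'$. Since $\sigma$ fixes $A:=\{a_{i}\}_{i\in I}$ setwise, this pushed sequence is merely a reindexing $(a_{\pi(i)})_{i\in I}$ of the original by some bijection $\pi$ of $I$. The two conditions defining ``fits into'' depend on the enumeration of $A$ only through the induced betweenness relation on triples, and by Observation \ref{order property} any large cage or nest over $A$ determines this betweenness up to inversion of the index set. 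Consequently $(a_{i})_{i\in I}$ itself fits into $\D'$, possibly after inverting the order on $J$ (an operation leaving $pol(\D')$ and $int(\D')$ unchanged).

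Corollary \ref{fitting} then yields $pol(\D)=pol(\D')$ and $int(\D)=int(\D')$, which combined with the identifications above gives $\sigma(pol(\D))=pol(\D)$ and $\sigma(int(\D))=int(\D)$, as required. The main potential obstacle is the bookkeeping in the second step, namely verifying cleanly that fitting of the reindexed sequence $(a_{\pi(i)})_{i\in I}$ into $\D'$ really does transfer to fitting of $(a_{i})_{i\in I}$ into a cage/nest with the same interior and pole as $\D'$. If this turns delicate, a robust backup is to bypass Corollary \ref{fitting} altogether and re-run its proof strategy directly for $\D$ and $\D'$ via Lemmas \ref{separation in cages} and \ref{cage compatibility}, using only that $A$ is densely distributed around both.
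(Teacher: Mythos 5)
The paper offers no proof of this corollary, but its placement immediately after Corollary \ref{fitting} makes it clear that the intended route is exactly the one you take: set $\D' := \sigma(\D)$, which is again a large cage or nest with $pol(\D') = \sigma(pol(\D))$ and $int(\D') = \sigma(int(\D))$, and then invoke Corollary \ref{fitting}. Your framing of the reduction is correct and clean.

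The step I would push back on is the claim that $(a_{i})_{i\in I}$ itself fits into $\D'$. Pushing the fitting data through $\sigma$ only gives that the reindexed sequence $(a_{\pi(i)})_{i\in I}$ fits into $\D'$, and bridging from that to $(a_{i})_{i\in I}$ fitting into $\D'$ (or into $\D'$ with $J$ reversed) requires knowing that $\pi = \sigma\restriction_{A}$ preserves the betweenness relation that the $I$-order places on $A$. You cite Observation \ref{order property} for this, but that observation only says that the index set $J$ of the \emph{cage} has an essentially unique order; it says nothing about the compatibility of the order coming from $\D$ with the one coming from $\D'$ on the sequence $A$. In other words, the two a priori distinct betweenness structures $\D$ and $\D'$ induce on $A$ have to be shown to coincide, and the observation you cite does not do that. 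Since the corollary does not assume $(a_{i})_{i\in I}$ is indiscernible, one also cannot appeal to any of the indiscernibility lemmas to fix this.

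You do flag this as the "main potential obstacle," and your backup proposal is the right one: rather than massaging $(a_{i})_{i\in I}$ into a sequence that literally fits $\D'$, one should rerun the argument of Corollary \ref{fitting} directly, noting that all its inputs only need $A$ to lie in both interiors and be densely distributed across the regions $D_{j,k}$ and $D'_{j,k}$ — facts that hold for $(a_{i})_{i\in I}$ fitting $\D$ and for $(a_{\pi(i)})_{i\in I}$ fitting $\D'$ simultaneously. Concretely, Lemma \ref{cage compatibility} rules out $pol(\D)\neq pol(\D')$ since $A$ cannot be contained in a single $D_{j,k}$ or $D'_{j,k}$ and $A \subseteq int(\D)\cap int(\D')$; then merging $\D$ and $\D'$ (after trimming and using Lemma \ref{separation in cages} and Remark \ref{non crossing}) gives $int(\D) = int(\D')$. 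I would present that as the actual proof rather than as a fallback.
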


\section{Indiscernibles and cages}
  
  
  We start with a couple of observations regarding cages and indiscernible sequences of elements.
  
  \begin{lemma}
  \label{parameters and cages}Let $(a_{j})_{j\in J}$ be a large sequence of elements indiscernible over some set $B$ of parameters and assume that
  $(a_{j})_{j\in J}$ fits into a large nest or cage $\mathcal{D}$. Then $B\subseteq ext(\mathcal{D})$.
  \end{lemma}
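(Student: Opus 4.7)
My plan is to proceed by contradiction: suppose there exists $b \in B \cap int(\mathcal{D})$ and derive a violation of the indiscernibility of $(a_j)_{j \in J}$ over $b$. (Note that since $pol(\mathcal{D})$ consists of vertices lying on every cycle of $\mathcal{D}$, and $int(\mathcal{D}) = \bigcup_{j \neq k} D_{j,k}$ excludes these boundary vertices, we have $pol(\mathcal{D}) \subseteq ext(\mathcal{D})$; thus the conclusion $B \subseteq ext(\mathcal{D})$ only forbids $B \cap int(\mathcal{D}) \neq \emptyset$.) The strategy exploits the tension between the specific location of $b$ within $\mathcal{D}$ and the near-homogeneity that indiscernibility forces on the $a_j$'s.

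The first move is to upgrade indiscernibility to automorphisms. By a standard saturation and compactness argument, since $(a_j)_{j \in J}$ is indiscernible over $B$, every order-preserving bijection $\tau \colon J \to J$ lifts to an automorphism $\hat{\tau} \in \mathrm{Aut}(\mathbb{G}^{\iota}/B)$ satisfying $\hat{\tau}(a_j) = a_{\tau(j)}$ for all $j \in J$. Such $\hat{\tau}$ fixes $b$ and preserves $\{a_j\}_{j \in J}$ setwise, so by Corollary \ref{invariance of cage interior} it also fixes $int(\mathcal{D})$ and $pol(\mathcal{D})$ setwise.

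Since $b \in int(\mathcal{D}) = \bigcup_{l < l'} D_{l, l'}$, choose $l_1 < l_2 \in L$ with $b \in D_{l_1, l_2}$, where $L$ is the index set of $\mathcal{D}$. The fits-into hypothesis, applied to triples in $J$, guarantees that both $J_0 := \{j \in J : a_j \in D_{l_1, l_2}\}$ and its complement $J \setminus J_0$ are non-empty: sectors corresponding to indices outside $[l_1, l_2]$ and strictly inside $[l_1, l_2]$ in $L$ each contain some $a_j$ by the second clause of the fits-into definition. Indeed, by Lemma \ref{separation in cages}, $J_0$ is an interval of $J$.

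The key technical claim that would close the argument is that $J_0$ is $\tau$-invariant for every order-preserving bijection $\tau$ of $J$: by rigidity of the dense unbounded linear order $J$ (guaranteed by largeness of the sequence), this would force $J_0 \in \{\emptyset, J\}$, contradicting the previous step. This is the main obstacle, because the sector $D_{l_1, l_2}$ is not manifestly $\hat{\tau}$-invariant: $\hat{\tau}$ need not preserve the family $\mathcal{D}$ itself, only the sets $int(\mathcal{D})$ and $pol(\mathcal{D})$. My plan is to give an intrinsic, $\hat{\tau}$-invariant characterization of the sector containing $b$ using only: the ambient graph structure, the set $int(\mathcal{D})$, the set $pol(\mathcal{D})$, and the separation predicates $R_n$. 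Lemma \ref{pole crossing} is the central tool here, since it ensures that $pol(\mathcal{D})$ is a bottleneck for paths between $int(\mathcal{D})$ and $ext(\mathcal{D})$ and between distinct boundary paths of $\mathcal{D}$, while Corollary \ref{cycles and cages} and Lemma \ref{cycles and nests} classify how arbitrary cycles in the graph interact with $\mathcal{D}$. The idea is to describe $b$'s sector as a connected component in a suitable auxiliary graph obtained from $\ci(\mathcal{D})$ by identifying (or deleting) $pol(\mathcal{D})$ and recognizing boundary paths via their separation signature on the sequence $(a_j)$. Making this intrinsic description rigorous is the technical heart of the proof; once it is in place, $\hat{\tau}$-invariance of $J_0$ follows and the contradiction is immediate.
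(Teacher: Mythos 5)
Your setup (argue by contradiction from $b\in B\cap int(\mathcal{D})$, observe that $pol(\mathcal{D})\subseteq ext(\mathcal{D})$ so only the interior is at issue, pick $l_1<l_2$ with $b\in D_{l_1,l_2}$, note that $J_0=\{j: a_j\in D_{l_1,l_2}\}$ is a proper nonempty interval) is all sound and matches the spirit of the paper's proof. But the argument then stops at the step you yourself flag as ``the technical heart,'' and I think that step is a genuine gap rather than a routine verification. The plan is to show $J_0$ is $\hat{\tau}$-invariant by describing the sector containing $b$ intrinsically in terms of $int(\mathcal{D})$, $pol(\mathcal{D})$, $b$, and the graph. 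The trouble is that the sector decomposition of $int(\mathcal{D})$ is \emph{not} intrinsic to these data: adjacent sectors $D_{l,l'}$ and $D_{l',l''}$ are separated by the boundary path $\delta_{l'}$, which is part of the chosen cage and is not contained in $pol(\mathcal{D})$, so deleting or identifying $pol(\mathcal{D})$ inside $\ci(\mathcal{D})$ does not disconnect one sector from the next. Two large cages with the same interior (for instance $\mathcal{D}$ and $\hat{\tau}(\mathcal{D})$) can have boundary paths threading through $int(\mathcal{D})$ in different places, and nothing in Lemma \ref{pole crossing}, Corollary \ref{cycles and cages}, or Corollary \ref{invariance of cage interior} pins down a canonical sector containing $b$. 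Consequently you obtain $\tau(J_0)=\{j: a_j\in \hat{\tau}(D_{l_1,l_2})\}$, which is the $b$-sector of the \emph{image} cage $\hat{\tau}(\mathcal{D})$, but you have no handle on the comparison $\tau(J_0)\overset{?}{=}J_0$, and ``recognizing boundary paths via their separation signature'' does not help: the signature of a boundary path is its position relative to the sequence, and $\hat{\tau}$ shifts those positions.

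The paper sidesteps the invariance question entirely by directly producing a formula over $b$ that violates indiscernibility. After locating $b\in C_{l_2,l_3}$ and choosing $a_{j_m}\in C_{l_m,l_{m+1}}$ for $m\in\{0,1,3,4\}$ around it, it considers $\phi(x_1,x_2,x_3)\equiv\exists z\, R_N(x_1,b,z)\wedge\neg R_N(x_2,b,z)\wedge\neg R_N(x_3,b,z)$, i.e.\ ``some bounded cycle has $x_1$ and $b$ on one side and $x_2,x_3$ on the other.'' The triple $(a_{j_1},a_{j_3},a_{j_4})$ satisfies $\phi$ (witness $\gamma_{l_1,l_3}$), whereas $(a_{j_0},a_{j_1},a_{j_3})$ cannot: inserting $b$ between positions $j_1$ and $j_3$ produces a sequence still fitting into (a trimmed) $\mathcal{D}$, and Lemma \ref{separation in cages} forces any cycle to split this augmented sequence into an interval and its complement, which rules out a witness placing $a_{j_0}$ and $b$ together but separating out $a_{j_1}$ (which sits between them). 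This is a local, formula-level argument; it uses the same ingredients you identified (the sector structure, Lemma \ref{separation in cages}), but it never needs the sector of $b$ to be a canonical or automorphism-invariant object. If you want to salvage the automorphism route, you would need to replace ``the sector containing $b$'' with something demonstrably invariant, and it is not clear such a replacement exists short of rederiving the paper's formula argument.
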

  \begin{proof}
  We will focus on the case in which $\mathcal{D}=(\delta_{l})_{l\in L}$ is a cage, since the case in which it is a nest is formally identical.
  Assume the result is false and pick $l_{2}<l_{3}\in L$ such that $b\in C_{l_{2},l_{3}}$.
  Now, choose $l_{0},l_{1},l_{4},l_{5}$ such that $l_{0}<l_{1}<l_{2}$ and $l_{3}<l_{4}<l_{5}$. By definition, there exists $j_{0}<j_{1}<j_{3}<j_{4}$ in $J$ such that $a_{j_{m}}\in C_{l_{m},l_{m+1}}$, $m\in\{0,1,3,4\}$.
  Let $N$ be the maximum length of the cycles $\gamma_{l,l'}$ and consider the following formula over $b$:
  \begin{align*}
  \phi(x_{1},x_{2},x_{3})\cong\exists z_{1}\dots z_{N}\,R_{N}(x_{1},b,z)\wedge \neg R_{N}(x_{2},b,z)\wedge \neg R_{N}(x_{3},b,z)
  \end{align*}
  \end{proof}
  
  The cycle $\gamma_{l_{1},l_{3}}$ witnesses $\models\phi(a_{j_{1}},a_{j_{3}},a_{j_{4}})$.
  On the other, after removing finitely many components from $(a_{j})_{j\in J}$ and $\D$ we may assume one can add $b$ to the sequence $(a_{j})_{j\in J}$ between the position of $a_{j_{1}}$ and $a_{j_{3}}$ and obtain a new sequence fitting into $\D$. Lemma \ref{separation in cages} then implies the formula $\phi(a_{j_{0}},a_{j_{1}},a_{j_{3}})$ cannot hold, contradicting indiscernibility of $(a_{j})_{j\in J}$ over $B$.
  \begin{observation}
  \label{subcages}Assume that $(a_{j})_{j\in J}$ is an indiscernible sequence and there is some infinite $J'\subseteq J$ such that $(a_{j})_{j\in J'}$ fits into a cage (nest). Then $(a_{j})_{j\in J}$ fits into a cage (nest).
  \end{observation}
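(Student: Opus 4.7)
We handle the cage case; the nest case is analogous. Fix a cage $\mathcal{D}=(\delta_l)_{l\in L}$ witnessing that $(a_j)_{j\in J'}$ fits, with $N$ a common bound on the lengths $|\delta_l|$. The plan is to encode the existence of a bracketing by a partial sub-cage of length-$\leq N$ paths as a parameter-free first-order formula on finite tuples, transfer it from $J'$ to $J$ by indiscernibility, and then reassemble the witnesses via compactness into a single global cage in the monster $\mathbb{G}^{\iota}$.

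For each $n\geq 3$, let $\psi_n(x_1,\ldots,x_n)$ be the parameter-free formula asserting the existence of distinct vertices $u,v$ and paths $p_0,\ldots,p_n$ of length at most $N$ from $u$ to $v$, pairwise intersecting exactly in $\{u,v\}$ and arranged so as to form a cage, with $x_k$ lying in the region cut out by the cycle $p_{k-1}\ast p_k^{-1}$ disjoint from $\bigcup_{i\ne k-1,k}p_i$, for each $k=1,\ldots,n$. Iterating the triple-bracketing condition in the definition of "fits into" (and using that each vertex outside $\bigcup_l\delta_l$ lies in a unique minimal region of $\mathcal{D}$, so that the bracketings of overlapping triples can be coherently glued via the inclusion $C_{j,k}\subseteq C_{j',k'}$ when $j'\leq j$, $k\leq k'$), one finds witnesses of $\psi_n(a_{j_1},\ldots,a_{j_n})$ among paths of $\mathcal{D}$ for any increasing $n$-tuple from $J'$. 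By indiscernibility of $(a_j)_{j\in J}$ the same holds for every increasing $n$-tuple from $J$.

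To assemble a global cage, let $L^{\ast}$ be a linear order containing $J$ in which each $j\in J$ has an immediate predecessor $j^{-}$ and successor $j^{+}$ (for instance, the interleaving of $J$ with a shifted copy of itself, identifying $j^{+}$ with $(j')^{-}$ whenever $j<j'$ are adjacent in $J$). Introduce new variables $u,v$ and tuples $\bar z_l$ of length $N$ for $l\in L^{\ast}$, and let $\Sigma$ be the partial type over $(a_j)_{j\in J}$ asserting that $(u,v,(\bar z_l)_{l\in L^{\ast}})$ encodes a cage with poles $u,v$ (paths of length $\leq N$ between $u$ and $v$, pairwise meeting only in $\{u,v\}$, satisfying the nesting condition) and that $a_j$ lies in the region between $\bar z_{j^{-}}$ and $\bar z_{j^{+}}$ for every $j\in J$. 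Any finite fragment of $\Sigma$ is realised by a witness of $\psi_n$ for sufficiently large $n$; hence by saturation $\Sigma$ is realised in $\mathbb{G}^{\iota}$, and the resulting cage fits $(a_j)_{j\in J}$. The principal technical subtlety is the choice of $L^{\ast}$, engineered so that the filling condition in the definition of "fits into" is automatically enforced by the placement $a_j\in C_{j^{-},j^{+}}$.
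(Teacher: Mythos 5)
Your proposal is correct and follows the same approach as the paper, whose entire proof is the single sentence: by compactness, if every finite subsequence of $I$ fits into a cage with components of uniformly bounded length, then so does $I$. You have usefully spelled out both the transfer of the finite bracketing property from $J'$ to $J$ via indiscernibility and the assembly of the global cage as a partial type, which the paper leaves implicit. One small correction to the assembly step: the index set $L^{\ast}$ of the target cage should be $\{j^{-},j^{+}:j\in J\}$ rather than a linear order \emph{containing} $J$ itself, since including a separate path $\bar z_{j}$ strictly between $\bar z_{j^{-}}$ and $\bar z_{j^{+}}$ would split $C_{j^{-},j^{+}}$ into two subregions $C_{j^{-},j}$ and $C_{j,j^{+}}$, at most one of which contains $a_{j}$; the other would then contain no $a_{i}$, violating the second (filling) clause in the definition of ``fits into.''
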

  \begin{proof}
  By compactness, if any finite subsequence of a sequence $I=(a_{j})_{j\in J}$ fits into a cage with components of uniformly bounded length, then so does $I$.
  \end{proof}

  \begin{lemma}
  \label{indiscernible cycles} Suppose we are given an indiscernible sequence of cycles, $(c_{j})_{j\in J}$.
  Then one of the following holds:
  \enum{(a)}{
  \item \label{case1} Either the sequence $(c_{j})_{j\in J}$ is a nest $\mathcal{D}^{0}$ or it contains finitely many cages $\mathcal{D}^{1},\dots\D^{m}$  with disjoint interiors.
  
  In this case given vertices $b$ and $b'$ that do not appear in the $c_{j}$ we have the following:
  \begin{itemize}
  \item If $(c_{j})_{j\in J}$ is indiscernible over $b$ there is one alternation in the truth value of $b\sim_{c_{j}}b'$ as $j$ ranges in $J$ if $b\in int(\D^{l})$ for $\D^{l}$ and none otherwise
  \item If $b',b''$ belong to $int(\mathcal{D}^{l})$, where $\mathcal{D}^{l}=(\delta^{l})_{j\in J}$ is as above, then there are   two alternations in the truth value of $b'\sim_{c_{j}}b''$ only if
  $j<j'$ in $J$ with $\{b',b''\}\cap D_{j,j'}^{l}=\{b'\}$ exists.
  \end{itemize}
  .

  \item \label{case2} We have $|c_{j}\cap c_{j'}|=2$ for distinct $j,j'$ and if $(c_{j})_{j\in J}$ is indiscernible over $b$, then $\{\nnb{c_{j}}{b}\}_{j\in J}$ is a family of disjoint sets.
  }
  \end{lemma}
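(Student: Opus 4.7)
My approach is a structural analysis based on the intersection pattern of the cycles. By indiscernibility of $(c_j)_{j \in J}$, the quantity $k := |c_i \cap c_j|$ is constant over distinct pairs $i, j \in J$, and the cases of the lemma will be sorted according to the value of $k$: the cases $k \leq 1$ and $k \geq 3$ will fall under \ref{case1}, while $k = 2$ will be \ref{case2}.

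In the case $k = 0$, Observation \ref{order property} gives immediately that $(c_j)_{j \in J}$ is a nest with $pol = \emptyset$, and we take $\D^0 = (c_j)_{j \in J}$. In the case $k = 1$, indiscernibility applied to any triple $c_{j_1}, c_{j_2}, c_{j_3}$ forces the three pairwise intersection vertices to coincide (otherwise, by indiscernibility, some $c_{j_i}$ would contain two distinct ``marked'' vertices while the permuted image contains only one), and Observation \ref{order property} yields a nest with common pole. In the case $k = 2$, the same argument fixes the two shared vertices $u, v$ across all pairs; writing $c_j = \alpha_j \cup \beta_j$ as two paths between $u$ and $v$, the disjointness of the $\nnb{c_j}{b}$ under indiscernibility over $b$ is proved by contradiction: if $b'' \in \nnb{c_j}{b} \cap \nnb{c_{j'}}{b}$, extending the sequence indiscernibly over $b, b''$ produces arbitrarily many cycles simultaneously separating $b$ from $b''$, and a planarity argument via property \ref{propC} contradicts this. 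In the case $k \geq 3$, cycles pairwise share subpaths of length at least $2$; I would identify the maximal shared subpaths between pairs $c_j, c_{j'}$ and show they organize, across varying $j$, into finitely many cages $\D^1, \ldots, \D^m$ with pairwise disjoint interiors (using Corollary \ref{fitting} and planarity to rule out cages with overlapping interiors).

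For the alternation statements in case \ref{case1}, I invoke Lemma \ref{separation in cages} applied to each $\D^l$. Under indiscernibility over $b$, the position of $b$ relative to each $\D^l$ is constrained to a definable cut of $J$: if $b \in ext(\D^l)$ for every $l$, the truth of $b \sim_{c_j} b'$ is constant in $j$; if $b \in int(\D^l)$ for a unique $l$, there is exactly one alternation at the cut separating the cycles of $\D^l$ for which $b$ lies on the ``inside'' from those for which it lies on the ``outside''. The second bullet, allowing two alternations when both $b', b'' \in int(\D^l)$, corresponds to $b'$ and $b''$ lying in different nested regions of the same $\D^l$, as detected by the existence of $j < j'$ with $\{b', b''\} \cap D^l_{j,j'} = \{b'\}$.

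I expect the main obstacle to be the case $k \geq 3$: the combinatorics of maximal shared subpaths, and in particular proving the finiteness of the number of resulting cages (rather than uncountably many), require careful planarity-driven bookkeeping that tracks how the intersections $c_j \cap c_{j'}$ decompose into arcs and how these arcs align as $j, j'$ vary. The alternation statements, once the cage/nest structure is in hand, should reduce relatively directly to Lemma \ref{separation in cages} and the previously established invariance properties.
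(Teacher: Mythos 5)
Your case distinction by $k = |c_i \cap c_j|$ alone is not fine enough, and the $k=2$ case as you present it is wrong. The paper's argument (which parametrizes cycles as tuples, identifies the set $F$ of constant coordinate positions, and takes $\mathcal{P}$ to be the set of "gaps" in $F$ around the cycle) reveals that when $|F|=2$ with the two constant vertices non-adjacent, the two variable arcs form two cages $\mathcal{C}^{k,l}$ and $\mathcal{C}^{l,k}$, and there is a genuine dichotomy: either these cages have \emph{disjoint} interiors (which is the paper's alternative (1), and falls under conclusion \ref{case1}), or they are \emph{nested} and merge into a single larger cage indexed by $J\times\{1,2\}$ (the paper's alternative (2), which is \ref{case2}). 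Your sketch simply asserts that $k=2$ gives disjointness of $\{\nnb{c_j}{b}\}_{j}$, but in the disjoint-interior subcase this fails: picture both cages occupying disjoint "wedges" between the two shared poles; then for a point $b$ in the exterior of both cages, the sets $\nnb{c_j}{b}$ for distinct $j$ overlap substantially. Your contradiction argument ("arbitrarily many cycles simultaneously separating $b$ from $b''$") does not actually yield a contradiction in planar graphs — nested families of cycles can all separate the same pair of points — so it cannot be used to force disjointness.

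Two smaller problems. First, for $k\geq 3$ you say the cycles pairwise share subpaths of length at least two; this is false — the shared vertices (constant positions) can be isolated around the cycle, giving as many gaps (and hence cages) as there are maximal runs. Second, you flag finiteness of the number of resulting cages as the "main obstacle," but this is automatic in the paper's framework: cycles have bounded length, so $F$ is finite and $\mathcal{P}$ has at most $|F|$ elements. The genuinely nontrivial step is not counting the cages but proving their interiors are disjoint (or, in the $|F|=2$ exceptional case, that nesting forces the doubled-cage picture), which the paper gets from the embedded Observation in its proof combined with Lemma \ref{cage compatibility} and Lemma \ref{parameters and cages}; your proposal does not supply a substitute for that argument.
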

  \begin{proof}
  We will consider the case in which $c_{j}$ and $c_{j'}$ have at least two vertices in common for $j,j'\in J$. If not, then $(c_{j})_{j\in J}$ is easily seen to be a nest and the required property can be checked using simpler versions of the same arguments.
  
  We start by noticing that given a linearly ordered set $J$ any indiscernible sequence $(\gamma_{j})_{j\in J}$ of paths between two vertices $u,v$ intersecting pair-wise only on $\{u,v\}$  which is indiscernible must be a cage with respect to the same linear order on $J$.
  
  Suppse $c_{i}=(c_{i}^{k})_{k=1}^{n}$. If we let $F\subset\{1,\cdots n\}$ be the set of indices such that $c_{j}^{l}$ is constant for all $j$ then $c_{j}^{k}\neq c_{j'}^{k'}$ whenever $j\neq j'$ and at least one of $k,k'$ does not belong to $F$.
  Let $\mathcal{P}$ be the collection of all pairs $(k,l)$ of cyclically consecutive indices of $F$ (in that order) which are not cyclically consecutive in $\{1,2\cdots n\}$ and for any $(k,l)\in\mathcal{P}$ let $\gamma^{k,l}_{j}$ be the subpath $c_{j}$ from $c_{j}^{k}$ to $c_{j}^{l}$. Then $\mathcal{C}^{k,l}:=(\gamma_{j}^{k,l})_{j\in J}$ is a cage.
  
  \begin{observation}
  For any distinct $(k_{0},k_{1}),(k_{2},k_{3})\in\mathcal{P}$ and any $j_{1}<j_{2}$ in $J$ it cannot be that $\cc^{k_{2},k_{3}}_{j}$ contains vertices in $int((\gamma_{j})_{j\in(j_{1},j_{2})})$ for $j\leq j_{1}$ or $j\geq j_{2}$.
  \end{observation}
  \begin{proof}
  Otherwise it follows from Lemma \ref{parameters and cages} that the sequence
  $(\cc_{j}^{k_{0},k_{1}})_{j\in (j_{1},j_{2})}$ is not indiscernible over $c_{j_{1}}$ or $c_{j_{2}}$ respectively, contradicting the indiscernibility of $(c_{j})_{j\in J}$.
  \end{proof}
  
  Assume first that $\mathcal{P}$ is not of the form $\{(k,l),(l,k)\}$ and pick
  any distinct $(k_{0},k_{1}),(k_{2},k_{3})\in\mathcal{P}$. We claim that $int(\C^{(k_{0},k_{1})})\cap int(\C^{(k_{2},k_{3})})=\emptyset$. Otherwise by Lemma \ref{cage compatibility} there exist $j_{1}<j_{2}$ in $J$ such that $int(\C^{k_{0},k_{1}})\subseteq C^{k_{2},k_{3}}_{j_{1},j_{2}}$, contradicting the observation above.

  If $\mathcal{P}=\{(k,l),(l,k)\}$ and $int(\C^{l,k})\cap int(\C^{k,l})\neq\emptyset$, then  $int(\C^{k,l})$ contains $\cc_{j}^{l,k}$ for some $j_{0}\in J$. By the observation above no
  $v\in\cc_{j}^{l,k}$ can belong to $C^{k,l}_{j',j''}$ for $j'<j''<j_{0}$ or $j_{0}<j'<j''$. By indiscernibility $\cc^{l,k}_{j}$ must lie in $C^{k,l}_{j',j''}$ for any $j'<j<j''$.
  
  So we have the following alternative:
  \begin{enumerate}
  \item \label{a} $int(\mathcal{C}^{k,l})\cap int(\mathcal{C}^{k',l'})=\emptyset$ for any two distinct $(k,l),(k',l')\in\mathcal{P}$
  \item \label{b} up to exchanging $\C$ and $\C'$  there is a cage $\mathcal{C}'=(\beta'_{j})_{j\in J\times\{1,2\}}$ where $J\times\{1,2\}$ is ordered lexicographically and $\beta'_{(j,1)}=\beta^{k,l}_{j}$ and $\beta'_{(j,2)}=\beta^{l,k}_{j}$.
  \end{enumerate}
  
  One can now check by inspection that given $b,b'$ outside of $\cup_{j\in J}c_{j}$ there is at most one alternation in the truth value of $b\sim_{c_{j}}b'$ for each
  pair $(d,(k,l))$, where $d\in\{b,b'\}$ and $(k,l)\in\mathcal{P}$ and $d\in int(\C^{k.l})$.
  
  In cases \ref{a} there can be at most two such pairs and in fact at most one in case $(c_{j})_{j}$ is indiscernible over $b$, since by Lemma \ref{parameters and cages} the latter implies $b\nin int(\mathcal{C}^{k,l})$ for any $(k,l)\in\mathcal{P}$. If $b,b'\in int(\C^{k,l})$ and there are two alternations, it can be easily seen by inspection that $j,j'$ with the desired properties exist.
  
  In case \ref{b} we have that $\{\nnb{c_{j}}{b}\}_{j\in J}$ is a family of disjoint sets as in the second alternative of the statement.
  %

  \end{proof}

  \begin{lemma}
  \label{alternations}Let $I=(a_{j})_{j\in J}$ be a non-constant large sequence of elements indiscernible over some set $B$ of parameters and $c$ a cycle in which none of the $a_{j}$ appear. If we partition $J$ into two sets $J_{1}$ and $J_{2}$ according to the side of $c$ each $a_{j}$ lies in then either:
  
  \enum{(a)}{
  \item \label{isolated case} up to exchanging $J_{1}$ and $J_{2}$ we have $J_{1}=\{i_{0}\}$, $J_{2}=J\setminus\{i_{0}\}$ and all $b\in B$ lie on the opposite side of $c$ from $a_{i_{0}}$.
  \item $J_{1}$ and $J_{2}$ are both infinite, $J_{1}$ is an interval and $(a_{j})_{j\in J}$ fits into a cage or nest
  }
  \end{lemma}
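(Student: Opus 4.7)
The plan is to analyze the partition $J = J_1 \sqcup J_2$ induced by the side of $c$, case-splitting on its shape, and either derive the $B$-side condition directly (case (a)) or exhibit a cage/nest into which $(a_j)_{j \in J}$ fits (case (b)).

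\textbf{Step 1 (convexity up to exchange).} I would first show that, up to exchanging $J_1$ and $J_2$, $J_1$ is convex. Argue by contradiction: if both $J_1$ and $J_2$ fail to be convex, then indiscernibility over $B$ (applied to suitable $5$-types of increasing tuples realizing an alternating side pattern) lets us produce arbitrarily many alternations of $f(j) \in \{1,2\}$ as $j$ ranges over $J$. After extracting an indiscernible subsequence where the side-pattern is fully periodic, I would apply Lemma \ref{separation in cages} in reverse: any cage or nest into which an initial segment fits forces the $J_l$ to be an interval on that segment, so the extracted alternations cannot all be witnessed by the single cycle $c$; this would force some putative separating family to contain cycles that cross $c$ in too many vertices, contradicting property \ref{propB} together with finiteness of $|c|$.

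\textbf{Step 2 (singleton subcase, case (a)).} Assume now $J_1 = \{i_0\}$ is a singleton. I must verify that every $b \in B$ satisfies $b \nsim_c a_{i_0}$. Suppose for contradiction some $b \in B$ has $b \sim_c a_{i_0}$. Using the standard extension of indiscernibility, add a new element $a^{*}$ immediately above $i_0$ in an enlarged dense linear order, preserving indiscernibility of the whole sequence over $B$ (hence over $b$). Since $tp(a_{i_0}/b) = tp(a^{*}/b)$, the existential $b$-formula witnessed by $c$ at $a_{i_0}$ — namely, the existence of a cycle of length $|c|$ on whose side containing $b$ the element lies and for which all other $a_j$ ($j \neq i_0$) lie on the opposite side — must be witnessed at $a^{*}$ by some cycle $c^{*}$. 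I would use Lemma \ref{separation in cages} (and Observation \ref{order property}) applied to the pair $c, c^{*}$ to merge them into a cage or nest with $b$ in its interior, contradicting Lemma \ref{parameters and cages}.

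\textbf{Step 3 (both parts large, case (b)).} If $J_1$ is an interval with $|J_1| \geq 2$, I would produce the cage or nest explicitly. Choose indices around the boundary of $J_1$ in $J$ and consider the family of subpaths of $c$ lying between the points where $c$ separates one side of $J_1$ from the other. Property \ref{propA} guarantees the existence of such subpaths, which by Observation \ref{order property} assemble into a cage (or nest, when the two boundaries of $J_1$ collapse to a common pole) after taking an appropriate index set. Using indiscernibility over $B$, this family can be extended to one indexed by a large linear order, and Observation \ref{subcages} together with compactness shows that $(a_j)_{j \in J}$ fits into this enlarged $\mathcal{D}$; Lemma \ref{separation in cages} then forces $J_1, J_2$ to be infinite and $J_1$ to remain an interval.

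The hardest step will be Step 2, because the partition datum is defined using the non-$B$-parameter $c$, so indiscernibility over $B$ does not a priori constrain the relative position of elements of $B$ with respect to $c$. The trick is to transfer this via the existential closure of separating cycles and then to invoke Lemma \ref{parameters and cages}, which is precisely the tool that converts a parameter sitting in the interior of a cage/nest into a violation of indiscernibility.
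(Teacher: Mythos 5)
Your overall plan (analyze alternations; in the singleton case use the type of $a_{i_0}$ over $b$ to transport the cycle; in the large-interval case build a cage from conjugates) is in the same general spirit as the paper's argument, but it has genuine gaps where the paper uses a tool you never invoke: from a pattern on $I$ one constructs an indiscernible sequence of \emph{cycles} $(c_j)_{j\in J}$, conjugate to $c$ under automorphisms permuting $I$, with $c$ itself occupying one slot, and then applies \cref{indiscernible cycles} to that sequence of cycles. Without that device your Steps 1 and 2 do not close.

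Concretely, Step 1 does not work as stated. You speak of ``cycles that cross $c$ in too many vertices, contradicting property \ref{propB} together with finiteness of $|c|$,'' but in this step the only cycle on the table is the single fixed $c$, so there is nothing to cross it. Invoking \cref{separation in cages} ``in reverse'' also presupposes that $I$ already fits into a cage or nest, which is precisely the content of case (b) and cannot be assumed at this point. The paper instead shows the sharper statement that one cannot have $J_1 < j_2 < J_3$ with $J_1, J_3$ infinite on $b$'s opposite side and $j_2$ on $b$'s side: it extends $I$ to an indiscernible sequence $(a_j c_j)_j$ over $b$ with $c_{j_2}=c$, observes a double alternation in $b\sim_{c_j} a_{j_2}$, and then concludes from \cref{indiscernible cycles} that $\{\nnb{c_j}{b}\}_j$ would have to be pairwise disjoint, which is false because each contains almost all of $I$. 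Both the at-most-two-alternations bound and the $B$-side condition in case (a) are read off from this single claim, whereas your Step 2 tries to merge just $c$ and one conjugate $c^*$ into a cage with $b$ in its interior; two paths are not enough for a cage, and the argument that the eventual large family has $b$ in its interior (rather than on a path or in a degenerate position) is not established. Your Step 3 is closest to the paper's, but the paper again derives the cage from the indiscernible family $(c_j)_j$ and \cref{indiscernible cycles}, not from subpaths of the single $c$ together with property \ref{propA}; relying on subpaths of $c$ alone does not give paths indexed by a linear order with the correct nesting, since $c$ is a fixed finite object.
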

  \begin{proof}
  Suppose first $B\neq\emptyset$ and fix some $b\in B$. Let $J'=\{j\,|\,a_{j}\sim_{c}b\}$, $J''=J\setminus J'$.
  We claim that it is not possible to have $J_{1}<j_{2}<J_{3}$, where $J_{i}\subseteq J''$ are infinite and $j_{2}\in J'$.
  Indeed, if this is the case, up to extending and then restricting $I$ we may as well assume that $J=J_{1}\cup\{j_{2}\}\cup J_{3}$ and there are $c_{j}$, $j\in J$ such that $(c_{j}a_{j})_{j\in J}$ indiscernible over $b$ and with $c=c_{j_{2}}$. (For $j\in J$ let $c'_{j}$ be the image of $c$ by an automorphism of $\mathbb{G}^{\iota}$ restricting to an order preserving permutation of $(a_{j})_{j\in J}$ sending $j_{2}$ to $j$. Pick an indiscernible sequence indexed over $J$ realizing the EM-type of $(a_{j}c'_{j})_{j\in J}$. Use an automorphism to bring it first components to $(a_{j})_{j\in J}$.) Notice that we then have a double alternation:
  $$a_{j_{2}}\nsim_{c_{j_{1}}}b,\,\,\,a_{j_{2}}\sim_{c_{j_{2}}}b,\,\,\,a_{j_{2}}\nsim_{c_{j_{3}}}b$$
  for $j_{1}\in J_{1}$, $j_{3}\in J_{3}$. According to Lemma \ref{indiscernible cycles} if this happens then $\{\nnb{c_{j}}{b}\}_{j\in J}$ are disjoint, contradicting $\nnb{c_{j}}{b}=\{a_{i}\}_{i\neq j}$.

  Next we show that (independently of $B$) if there are infinite $J_{1}<J_{2}$ in $J$ such that $c$ separates $\{a_{j}\}_{j\in J_{1}}$ from
  $\{a_{j}\}_{j\in J_{2}}$, then $I$ fits into a cage.
  Using Observation \ref{subcages} we may assume that $J=\Q$, $J_{1}=\Q_{<0}$ and $J_{2}=\Q_{\geq 0}$ and that there is an indiscernible sequence $(a_{j}c_{j})_{j\in J}$ with $c_{0}=c$. Lemma \ref{indiscernible cycles} then yields the desired result: every element in the sequence must lie in the interior of either the nest or one of the finitely many cages and this cage should be the same for some infinite subsequence of them. Since for any distinct $j_{0},j_{1}\in J$ there are two alternations in the truth value of $a_{j_{0}}\sim_{c_{j}}a_{j_{1}}$ as $j$ ranges in $J$ it follows that this infinite subsequence fits into a cage and thus the entire sequence does by Observation \ref{subcages}.

  Assume now that $I$ does not fit into a cage or nest and let $\{J,J'\}$ be the partition of $J$ induced by $c$. We argue that there cannot be three alternations between $J$ and $J'$. If this is the case then up to reversing the orientation of $J$
  we have $j_{1}<J_{2}<j_{3}<J_{4}'$, where $j_{1},j_{3}\in J$ and $J_{2},Jl4 \subseteq J'$ are infinite. Since $I_{\restriction J_{2}\cup\{j_{3}\}\cup J_{4}}$ is indiscernible over $a_{j_{1}}$ we obtain a contradiction with the claim of the previous paragraph.
  
  Since any finite sub-interval of $J$ is a singleton, in fact either $J_{1}$ is a singleton or empty. In both cases our first claim ensures that if $I$ is indiscernible over $b$, then $c$ separates $b$ from $I$.
  %
  %
  %

  \end{proof}
  %
  \newcommand{\piv}[0]{pivot}  
  \newcommand{\nod}[0]{nod}

\section{The envelope of a free indiscernible sequence}
  \renewcommand{\S}[0]{\mathcal{S}}
  \newcommand{\X}[0]{\mathcal{X}}
  \newcommand{\Y}[0]{\mathcal{Y}}
  \renewcommand{\H}[0]{\mathcal{H}}
  
  \begin{definition}
  \label{Gj}Given some free non-constant sequence of elements $I=(a_{j})_{j\in J}$
  we let $\S(I)_{j}$ be the collection of all cycles which separate $a_{j}$ from $I\setminus\{a_{j}\}$
  and
  $$\X(I)_{j}=\bigcup_{c\in\S(I)_{j}}\ppb{c}{a_{j}}\cup c.$$
  \end{definition}
  
  For the remainder of the section $I=(a_{j})_{j\in J}$ will be a free non-constant indiscernible sequence of elements.
  
  \begin{definition}
  We say that a vertex in $\X(I)_{j}$ is \emph{external} if it is not contained in $\ppb{c}{a_{j}}$ for any $c\in\S(I)_{j}$.
  \end{definition}

  \begin{definition}
  Let $A$ be a finite collection of vertices. By the complementary regions of $A$ we mean the equivalence classes of the vertices in $V\setminus A$ by the equivalence relation given by:
  \begin{itemize}
  \item  $b\sim b'$ if and only if $b\sim_{d}b'$ for all cycles $d\subseteq A$
  \end{itemize}
  \end{definition}
  
  \begin{lemma}
  \label{non touching} Let $c,c'$ two cycle, $a$ a vertex and suppose that $|c\cap c'|\geq 2$. Then each complementary region $D$
  of $c\cup c'$ is of the form $\ppb{d}{b}$ for some cycle $d\subset c\cup c'$ and some $b\in D$.
  \end{lemma}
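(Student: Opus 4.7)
The plan is to exhibit, for each complementary region $D$ of $c \cup c'$, a specific cycle $d \subseteq c \cup c'$ such that $D = \ppb{d}{b}$ for any $b \in D$. Since the sides of any cycle in $c \cup c'$ are constant on complementary regions, it suffices to find a cycle that has $D$ as one of its two sides.

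First I would analyze the arc structure of $c \cup c'$. The intersection $c \cap c'$ decomposes as a disjoint union of maximal common subpaths $P_1, \ldots, P_k$; the endpoints of these subpaths form a set $V_*$ of ``branching vertices'' where $c$ and $c'$ diverge. Since $|c \cap c'| \geq 2$, we have $|V_*| \geq 2$ (viewing a single common subpath of length $\geq 2$ as contributing its two endpoints). The vertices of $V_*$ partition $c$ into arcs $\alpha_1, \ldots, \alpha_m$ and $c'$ into arcs $\beta_1, \ldots, \beta_{m'}$, with the shared $P_i$'s appearing as common arcs of both. Every simple cycle $d \subseteq c \cup c'$ is a concatenation of such arcs.

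Next I would split into cases according to whether $c$ and $c'$ cross. In the non-crossing case, property \ref{propC} bounds the number of complementary regions by three, and each is identifiable as a side of one of $c$, $c'$, or a third cycle $c''$ assembled from arcs of each using property \ref{propB} to determine which side of $c$ contains $c' \setminus (c \cap c')$ and vice versa. In the crossing case I would proceed by induction on $|V_*|$: the base case is the ``theta'' configuration in which $c$ and $c'$ share exactly two branching vertices and there are four regions, each bounded by an explicit combination of the four arcs. For the inductive step, I would locate an intermediate cycle $d_0 \subseteq c \cup c'$ built from one arc of $c$ and one arc of $c'$ connecting two vertices of $V_*$, and observe that both $(c, d_0)$ and $(c', d_0)$ are smaller pair-configurations to which the inductive hypothesis applies; the complementary regions of $c \cup c'$ are then recovered from those of the smaller pairs.

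The main obstacle I expect is the combinatorial bookkeeping in the crossing case: showing that the regions identified by the inductive decomposition match exactly the equivalence classes in the definition of complementary regions, with no ``merging'' or ``splitting.'' The essential tools are property \ref{propA}, which says that a path between distinct sides of any cycle must cross that cycle along a subpath (so paths between complementary regions must use a $V_*$-to-$V_*$ traversal along $c \cup c'$), together with properties \ref{propB} and \ref{propC}, which control how the sides of two cycles interact when they do or do not cross.
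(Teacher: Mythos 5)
Your plan is a genuinely different route from the paper's, and the place you flag as the main obstacle is, in fact, a real gap. The paper does not attempt to classify all complementary regions simultaneously. Instead it fixes one region $D$ and one vertex $b\in D$, looks at the maximal subpaths $p_1,\dots,p_k$ of $c'$ whose interiors lie in $\ppb{c}{b}$ (each has two distinct endpoints on $c$ precisely because $|c\cap c'|\ge 2$), and then iteratively trims: starting from $d_0=c$, each $d_{i+1}$ is obtained from $d_i$ by replacing an arc of $d_i$ with $p_{i+1}$, so $\ppb{d_{i+1}}{b}\subsetneq\ppb{d_i}{b}$, and after all $k$ paths are absorbed one has $\ppb{d_k}{b}=D$ with $d_k\subseteq c\cup c'$. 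There is no case split on crossing versus non-crossing and no global arc bookkeeping; the whole argument is local to the chosen $b$.

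The problem with your inductive step is concrete. You pass from the pair $(c,c')$ to the pairs $(c,d_0)$ and $(c',d_0)$ and say the regions of $c\cup c'$ are ``recovered from those of the smaller pairs.'' But a region $D$ of $c\cup c'$ is only known to sit inside some region $D_1$ of $c\cup d_0$ and some region $D_2$ of $c'\cup d_0$; there is no reason for $D=D_1\cap D_2$. A cycle $d\subseteq c\cup c'$ witnessing the separation defining $D$ may use a $c$-arc other than the one in $d_0$ and a $c'$-arc other than the one in $d_0$, so $d$ lies in neither $c\cup d_0$ nor $c'\cup d_0$, and the partition by $c\cup c'$ can be strictly finer than the common refinement of the two smaller partitions. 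Closing your argument would require an additional lemma that (with a good choice of $d_0$) no such splitting occurs and that the bounding cycle of $D$ can be assembled from the bounding cycles of $D_1$ and $D_2$. The paper's one-region-at-a-time trimming avoids this difficulty entirely, and is also shorter: it does not need the crossing/non-crossing dichotomy, the theta base case, or any count of branching vertices.
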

  \begin{proof}
  Fix $b\in D$ and consider the collection $\mathcal{P}=\{p_{1},\dots p_{k}\}$ of maximal subpaths of $c'$ with interior in
  $\ppb{c}{b}$. Clearly, for no such subpath $p$ can the two endvertices coincide, since that would imply $p$ is the whole $c'$ and $|c\cap c'|=1$. By induction it is easy to construct cycles $d_{0}=c$, $d_{1},d_{2}\dots d_{k}$ such that the complementary region of $c\cup (c'\setminus\ppb{c}{b})\cup p_{1},\dots p_{k}$ coincides with $\ppb{d_{k}}{b}$.
  \end{proof}
  
  \begin{lemma}
  \label{external are connected}Assume $\S(I)_{j_{0}}\neq\emptyset$ for $j_{0}\in J$. Then any two external vertices of $\X(I)_{j_{0}}$ are contained in a common cycle $c\in\S(I)_{j}$.
  \end{lemma}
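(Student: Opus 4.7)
The plan is to choose $c_{1},c_{2}\in\S(I)_{j_{0}}$ with $v_{i}\in c_{i}$ and combine them into a single separating cycle passing through both $v_{i}$. If $c_{1}=c_{2}$ we are done, so assume they differ and split on $|c_{1}\cap c_{2}|$.

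If $c_{1}\cap c_{2}=\emptyset$, the two cycles are disjoint and hence non-crossing, and property \ref{propC} forces their $a_{j_{0}}$-sides to be nested; the inner cycle then lies entirely inside the outer's $\ppb{}{a_{j_{0}}}$-set, contradicting externality of one of the $v_{i}$. If $|c_{1}\cap c_{2}|=1$ with common vertex $w$, Remark \ref{non crossing} and property \ref{propB} place each $c_{i}\setminus\{w\}$ on a single side of $c_{3-i}$. A four-way case split shows that in three of the four sign patterns the externality condition forces the corresponding $v_{i}=w$, so $v_{i}\in c_{3-i}$ and we take $c=c_{3-i}$; the fourth pattern, where each non-shared arc lies on the $a_{j_{0}}$-opposite side of the other cycle, is incompatible with the simultaneous presence of $a_{j_{0}}\in\ppb{c_{1}}{a_{j_{0}}}\cap \ppb{c_{2}}{a_{j_{0}}}$ and of the $a_{j}$'s in $\nnb{c_{1}}{a_{j_{0}}}\cap\nnb{c_{2}}{a_{j_{0}}}$, so it does not occur.

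The substantive case is $|c_{1}\cap c_{2}|\geq 2$, where Lemma \ref{non touching} applies. The key intermediate claim is that all $a_{j}$ with $j\ne j_{0}$ lie in a single complementary region $D_{\infty}$ of $c_{1}\cup c_{2}$. Granting this, Lemma \ref{non touching} delivers a cycle $d\subseteq c_{1}\cup c_{2}$ with $D_{\infty}=\ppb{d}{a_{j}}$ for any $a_{j}\in D_{\infty}$. Since $a_{j_{0}}$ lies neither in $D_{\infty}$ nor on $c_{1}\cup c_{2}\supseteq d$, the cycle $d$ separates $a_{j_{0}}$ from every other $a_{j}$, so $d\in\S(I)_{j_{0}}$. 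Externality of $v_{i}$ now excludes $v_{i}\in\ppb{d}{a_{j_{0}}}$, while $v_{i}\in c_{1}\cup c_{2}$ cannot lie in the open region $D_{\infty}$, forcing $v_{i}\in d$, so $c=d$ works.

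The hard part is precisely the single-region claim. The plan is to argue by contradiction: if the $a_{j}$ were spread over two or more complementary regions of $c_{1}\cup c_{2}$, indiscernibility of $I$ allows us to extract, via compactness and an automorphism-based translate of the pair $(c_{1},c_{2})$ along $(a_{j})_{j}$, an indiscernible sequence of separating cycles reflecting this split. Lemma \ref{indiscernible cycles} then constrains the shape of this sequence, and combining it with Lemma \ref{parameters and cages} upgrades the data to a cage or nest into which $(a_{j})_{j\in J}$ fits, contradicting freeness of $I$. This interplay between indiscernibility, freeness, and the structural lemmas on indiscernible sequences of cycles is the main obstacle; once it is in place, the remaining analysis reduces cleanly to Lemma \ref{non touching} and the defining properties of external vertices.
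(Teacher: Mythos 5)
Your overall structure matches the paper's: reduce by Lemma~\ref{non touching} to the case $|c_{1}\cap c_{2}|\geq 2$, show all $a_{j}$ with $j\ne j_{0}$ lie in a single complementary region of $c_{1}\cup c_{2}$, extract the bounding cycle $d\in\S(I)_{j_{0}}$, and conclude $v_{1},v_{2}\in d$ from externality. The final step and the exclusion of the low-intersection cases are handled correctly (if a bit more verbosely than the paper, which simply notes that after the harmless reduction $v_{i}\notin d_{3-i}$ the relation $d_{i}\setminus d_{3-i}\subseteq\ppb{d_{3-i}}{a_{j_{0}}}$ for some $i$ kills externality; your four-way sign split for $|c_{1}\cap c_{2}|=1$ includes two sub-cases that cannot actually occur, but the conclusion is unaffected).

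The one place where you diverge from the paper is precisely the step you flag as ``the hard part,'' the single-region claim, and here your sketch is both heavier than necessary and vaguer than it should be. The paper gets this for free from Lemma~\ref{alternations}: since the finitely many complementary regions of $c_{1}\cup c_{2}$ partition the infinite set $\{a_{j}\}_{j\ne j_{0}}$, some region $D_{\infty}$ meets infinitely many $a_{j}$; if another region also met some $a_{j_{1}}$, then taking $d$ to be the cycle bounding $D_{\infty}$ (via Lemma~\ref{non touching}) gives a partition of $J$ by $d$ into an infinite set and a set containing both $j_{0}$ and $j_{1}$, which is neither of the two alternatives (singleton side, or both sides infinite and $I$ fitting a cage/nest) permitted by Lemma~\ref{alternations} for a free $I$. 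Your proposed route --- translate $(c_{1},c_{2})$ along the sequence, extract an indiscernible sequence of cycles, feed it to Lemmas~\ref{indiscernible cycles} and~\ref{parameters and cages} --- is essentially a re-derivation of Lemma~\ref{alternations} itself, and as written the phrase ``translate the pair $(c_{1},c_{2})$ along $(a_{j})_{j}$'' is not quite the right object to consider (the relevant cycle $d$ lives inside $c_{1}\cup c_{2}$, not in the $\S(I)_{j}$ for varying $j$). You should recognize that the needed dichotomy is already the content of Lemma~\ref{alternations} and cite it directly; this both shortens the argument and sidesteps the need to make the compactness/automorphism step precise.
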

  \begin{proof}
  Let $v_{1}$ and $v_{2}$ be external vertices contained in $d_{1},d_{2}\in\S(I)_{j_{0}}$ respectively.
  We may assume $v_{i}\notin d_{3-i}$. It cannot be the case that
  $|d_{1}\cap d_{2}|\leq 1$, since this implies that  $d_{i}\setminus d_{3-i}\subseteq\ppb{d_{3-i}}{a_{j_{0}}}$ for some $i$, so that $v_{i}$ cannot be external.
  
  Thus, we can apply Lemma \ref{non touching} to $d_{1}$ and $d_{2}$. One of the four complementary regions of $d_{1}\cup d_{2}$ contains $a_{j_{0}}$ and by Lemma \ref{alternations} one of the others must contain $\{a_{j}\}_{j\neq j_{0}}$. By Lemma \ref{non touching} the latter must be of the form $\nnb{e}{a_{j_{0}}}$ for some cycle $e\subset d_{1}\cup d_{2}$.
  Since $d_{1}\cup d_{2}\subseteq\ppb{e}{a_{j_{0}}}\cup e$ and $v_{1},v_{2}$ are external, it follows that $v_{1},v_{2}\in e$, as desired.
  \end{proof}
  
  For future reference we also record the following:
  \begin{lemma}
  \label{merging cycles} For any finite set $F\subseteq\X(I)_{j}$ there exists some cycle $c\in\S(I)_{j}$ such that $F\subseteq\ppb{c}{a_{j}}\cup c$.
  \end{lemma}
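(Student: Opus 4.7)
The plan is to induct on $|F|$. The case $|F|\leq 1$ is immediate from the definition of $\X(I)_j$. For the inductive step it suffices to prove a two-cycle merging statement: for any $c_1,c_2\in\S(I)_j$ there exists $e\in\S(I)_j$ with $(\ppb{c_1}{a_j}\cup c_1)\cup(\ppb{c_2}{a_j}\cup c_2)\subseteq\ppb{e}{a_j}\cup e$. Given such a merge, one applies it to the cycle furnished by the inductive hypothesis on $F\setminus\{v\}$ and a cycle witnessing $v\in\X(I)_j$.

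We split the merge according to $|c_1\cap c_2|$. If $|c_1\cap c_2|\leq 1$, by Remark \ref{non crossing} the cycles do not cross, so by property \ref{propB} each of $c_1\setminus c_2$ and $c_2\setminus c_1$ lies on a single side of the other cycle. Using that the $a_j$-side of each cycle is a topological disk containing $\iota(a_j)$, a short case analysis rules out the configurations in which each cycle lies on the non-$a_j$-side of the other (these force $c_1=c_2$); in the remaining configurations the closed $a_j$-side of one cycle is contained in that of the other, and we take the larger as $e$.

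The main case is $|c_1\cap c_2|\geq 2$, where we invoke Lemma \ref{non touching}: every complementary region of $c_1\cup c_2$ has the form $\ppb{d}{b}$ for some subcycle $d\subseteq c_1\cup c_2$. Since $c_1\cup c_2$ is finite there are only finitely many such regions, whereas $\{a_i\}_{i\neq j}$ is infinite, so pigeonhole locates a region $R^*$ containing infinitely many $a_i$. Applying Lemma \ref{alternations} to the cycle bounding $R^*$, with freeness of $I$ excluding case (b) of that lemma, the induced partition of $I$ isolates at most one element; since $R^*\cap I$ is infinite and $a_j\notin R^*$, the isolated element has to be $a_j$, whence $R^*$ contains every $a_i$ with $i\neq j$. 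By Lemma \ref{non touching} again we get $R^*=\nnb{e}{a_j}$ for some cycle $e\subseteq c_1\cup c_2$, and this $e$ belongs to $\S(I)_j$.

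The verification that $\ppb{c_\ell}{a_j}\cup c_\ell\subseteq\ppb{e}{a_j}\cup e$ for $\ell\in\{1,2\}$ is then immediate: $R^*$ contains no vertex of $c_1\cup c_2$ and lies entirely on the non-$a_j$-side of $c_\ell$, so both $c_\ell$ and $\ppb{c_\ell}{a_j}$ avoid $R^*=\nnb{e}{a_j}$ and therefore sit in $\ppb{e}{a_j}\cup e$. I expect the pigeonhole-plus-\ref{alternations} step to be the main obstacle: it is the heart of the argument (and mirrors the reasoning already used in the proof of Lemma \ref{external are connected}), and it is exactly where freeness of $I$ is essential. Everything else reduces to a topological case analysis of how two cycles on the sphere can sit relative to one another.
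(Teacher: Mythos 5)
Your proof follows essentially the same route as the paper's: reduce to merging two cycles in $\S(I)_{j}$, split by whether $|c_{1}\cap c_{2}|\leq 1$ (where nestedness of the $a_{j}$-sides gives the merged cycle directly) or $|c_{1}\cap c_{2}|\geq 2$ (where Lemma \ref{non touching} applies), then induct. The paper's one-line argument leaves implicit exactly the step you spell out, namely why Lemma \ref{non touching} yields a cycle $e\subseteq c_{1}\cup c_{2}$ whose non-$a_{j}$ side swallows all of $\{a_{i}\}_{i\neq j}$; your pigeonhole argument combined with Lemma \ref{alternations} and freeness of $I$ is the natural way to supply it.
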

  \begin{proof}
  Indeed, for any two cycles $c,c'\in\S(I)$ either $|c\cap c'|=1$ and we may assume that $\ppb{c}{a_{j}}\cup c\subseteq\ppb{c'}{a_{j}}\cup c'$ or $|c\cap c'|$ and Lemma \ref{non touching} provides some cycle $e\subseteq c\cup c'$ such that $\ppb{c}{a_{j}}\cup c\cup\ppb{c'}{a_{j}}\subseteq e\cup\ppb{e}{a_{j}}$. The result now easily follows by induction.
  \end{proof}
  
  \begin{lemma}
  \label{one point intersection}  For distinct $j_{1},j_{2}$ in $J$ the intersection $\X(I)_{j_{1}}\cap\X(I)_{j_{2}}$ consists of at most one vertex, external in both $\X(I)_{j_{1}}$ and $\X(I)_{j_{2}}$.
  \end{lemma}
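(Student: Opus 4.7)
My plan is to reduce both parts of the lemma to the same impossibility: that there exist cycles $c_i\in\S(I)_{j_i}$ ($i=1,2$) with $|c_1\cap c_2|\geq 2$. For the externality claim, I would suppose $v\in\X(I)_{j_1}\cap\X(I)_{j_2}$ is not external in $\X(I)_{j_1}$, so $v\in\ppb{c_1}{a_{j_1}}$ for some $c_1\in\S(I)_{j_1}$, and pick any $c_2\in\S(I)_{j_2}$ with $v\in\ppb{c_2}{a_{j_2}}\cup c_2$. If $|c_1\cap c_2|\leq 1$ then by Remark~\ref{non crossing} the cycles would not cross, so one of $c_1\setminus c_2$, $c_2\setminus c_1$ must lie entirely on a single side of the other; combined with the fact that $c_1$ isolates $a_{j_1}$ and $c_2$ isolates $a_{j_2}$ among the $a_j$'s, this would force either $a_{j_2}\in\ppb{c_1}{a_{j_1}}$ or $v\in\nnb{c_1}{a_{j_1}}$, both impossible. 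Hence $|c_1\cap c_2|\geq 2$. For the uniqueness claim I would assume the external claim is already proved: if $v_1\neq v_2$ are in the intersection, both are external in both envelopes, so by Lemma~\ref{external are connected} the vertices $v_1,v_2$ lie on a common cycle $c_1\in\S(I)_{j_1}$, and symmetrically on some $c_2\in\S(I)_{j_2}$, giving $\{v_1,v_2\}\subseteq c_1\cap c_2$.

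Given such $c_1,c_2$, I would derive a contradiction as follows. Using Ramsey together with the homogeneity of $\mathbb{G}^{\iota}$ I would extract an indiscernible sequence $(a_j,c_j)_{j\in J}$ with $c_j\in\S(I)_j$, arranged so that $(c_{j_1},c_{j_2})=(c_1,c_2)$ after a suitable automorphism; by indiscernibility then $|c_j\cap c_{j'}|\geq 2$ for all pairs. Applying Lemma~\ref{indiscernible cycles} to $(c_j)$, the nest alternative of case~(a) is excluded because nest cycles share at most one vertex; the cages alternative is excluded by alternation counting. Indeed, fixing $j_0<j_1$ in $J$, the subsequence $(c_j)_{j>j_0}$ is indiscernible over $a_{j_0}$, so the first bullet of case~(a) bounds the alternations of $a_{j_0}\sim_{c_j}a_{j_1}$ on $\{j\in J:j>j_0\}$ by one; but since each $c_j$ separates $a_j$ from $I\setminus\{a_j\}$, this truth value is true for all such $j\neq j_1$ and false at $j=j_1$, yielding two alternations. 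Hence case~(b) holds, providing a wrapping cage $\mathcal{C}'=(\beta'_{(j,i)})_{(j,i)\in J\times\{1,2\}}$ in lexicographic order with $c_j=\gamma'_{(j,1),(j,2)}$. I would then check that $a_j\in C_{(j,1),(j,2)}$ (because $\beta'_{(j+1,1)}$ lies on the non-$a_j$ side of $c_j$), and that for any $i_0<i_1<i_2$ in $J$ the indices $(i_0,1)<(i_1,1)<(i_2,1)<(i_2,2)$ place $a_{i_0},a_{i_1},a_{i_2}$ into the pairwise-disjoint regions $C_{(i_0,1),(i_1,1)}$, $C_{(i_1,1),(i_2,1)}$, $C_{(i_2,1),(i_2,2)}$ as required by the fitting definition. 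So $(a_j)_{j\in J}$ fits into $\mathcal{C}'$, contradicting the freeness of $I$.

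The main obstacle will be the indiscernible extraction, which must simultaneously preserve the condition $c_j\in\S(I)_j$ and (via Observation~\ref{subcages}) the freeness of the extracted sub-sequence, while arranging the distinguished pair to be literally $(c_1,c_2)$. The planar reduction of externality and the fitting verification in case~(b) of Lemma~\ref{indiscernible cycles} also require some careful bookkeeping, but are routine once the lexicographic cage order is unpacked.
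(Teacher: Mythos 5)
Your first paragraph is essentially the paper's reduction: both the failure of externality (via Remark~\ref{non crossing}) and the presence of two intersection vertices (via Lemma~\ref{external are connected}) lead to cycles $c_1\in\S(I)_{j_1}$, $c_2\in\S(I)_{j_2}$ with $|c_1\cap c_2|\geq 2$, exactly as in the paper.

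From that point, however, the paper finishes in a single step rather than via an extraction argument. Since $a_{j_1}$ and $a_{j_2}$ lie in complementary regions of $c_1\cup c_2$ distinct from each other and from the region containing every other $a_j$, Lemma~\ref{non touching} directly produces a cycle $d\subseteq c_1\cup c_2$ with $\nnb{d}{a_{j_1}}\cap\{a_j\}_{j\in J}=\{a_j\}_{j\in J\setminus\{j_1,j_2\}}$. Thus $d$ splits $J$ into two pieces each of size at least two, which Lemma~\ref{alternations} forbids for a free sequence. No extraction and no appeal to Lemma~\ref{indiscernible cycles} is required.

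Your second paragraph hinges on producing an indiscernible $(a_j,c_j)_{j\in J}$ with $c_j\in\S(I)_j$ and $|c_j\cap c_{j'}|\geq 2$ for \emph{all} pairs. You rightly flag this as the main obstacle, and it is a genuine gap rather than a bookkeeping issue: the data you actually have is local to the single pair $(j_1,j_2)$. An automorphism aligning some extracted pair $(c'_{j_1},c'_{j_2})$ with $(c_1,c_2)$ also moves the $a_j$'s and so destroys $c'_j\in\S(I)_j$; conversely, an automorphism sending $(a_{j_1},a_{j_2})$ to $(a_j,a_{j'})$ does not fix $I$ pointwise, so the images of $c_1,c_2$ need not separate $a_j,a_{j'}$ from the rest of $I$. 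Even if this were repaired, you would be invoking the wrapping cage $\mathcal{C}'$ that appears only inside the \emph{proof} of Lemma~\ref{indiscernible cycles}, not in its statement, and your check of the second clause of the fitting definition tacitly needs $J$ to be dense. Applying Lemma~\ref{non touching} to $c_1\cup c_2$ directly, as the paper does, sidesteps all of these issues.
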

  \begin{proof}
  We may assume $\S(I)_{j}\neq\emptyset$ for all $j$. Suppose $\X(I)_{j_{1}}\cap\X(I)_{j_{2}}$ contains distinct vertices $v,v'$ for $j_{0}\neq j_{1}$.
  
  Notice that given  $c_{1}\in\S(I)_{j_{1}}$ and $c_{2}\in\S(I)_{j_{2}}$ it cannot be that $c_{2}$ lies entirely in $c_{1}\cup\ppb{c_{1}}{a_{j_{1}}}$, or else we would have $\{a_{j}\}_{j\in J\setminus\{a_{j_{0}},a_{j_{1}}\}}\subseteq \nnb{c_{2}}{a_{j_{2}}}\subseteq\ppb{c_{1}}{a_{j_{1}}}$.

  Thus, if the intersection
  $\ppb{c_{1}}{a_{j_{i}}}\cap(\ppb{c_{2}}{a_{j_{3-i}}}\cup c_{3-i})$ is non-empty for some $c_{1}\in\S(I)_{j_{i}}$ and some $c_{2}\in\S(I)_{j_{2}}$, then $c_{1}$ and $c_{2}$ must cross.
  
  Otherwise for any such $c_{1},c_{2}$ we can choose $v$ and $v'$ to be external in both $\X(I)_{j_{1}}$ and $\X(I)_{j_{2}}$.
  
  Remark \ref{non crossing} in the first situation and Lemma \ref{external are connected} in the second there must be $c_{1}\in\S(I)_{j_{1}}$ and $c_{2}\in\S(I)_{j_{2}}$ with at least two vertices in common.
  
  Notice that $a_{j_{1}}$ and $a_{j_{2}}$ lie in different complementary regions from each other and any $a_{j}$, $j\in J\setminus\{j_{1},j_{2}\}$.
  Lemma \ref{non touching} yields some cycle $d\subseteq c_{1}\cup c_{2}$ such that $\nnb{d}{a_{j_{1}}}\cap\{a_{j}\}_{j\in J}=\{a_{j}\}_{j\in J'}$ for some infinite subset $J'\subseteq J\setminus\{j_{1},j_{2}\}$, contradicting Lemma \ref{alternations}.

  \end{proof}
  
  \begin{lemma}
  For distinct $j_{1},j_{2}$ in $J$ there exist a unique vertex  $p_{j_{1},j_{2}}\in\X(I)_{j_{1}}$   such that any path (possibly consisting of a single vertex) from $\X(I)_{j_{1}}$ and $\X(I)_{j_{2}}$ intersecting $\X(I)_{j_{1}}$ only at its endpoints must start at $p_{j_{1},j_{2}}$.
  \end{lemma}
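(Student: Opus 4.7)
The plan is to handle existence quickly and then devote the bulk of the argument to uniqueness. For existence: if $\X(I)_{j_1}\cap\X(I)_{j_2}$ is non-empty, by Lemma \ref{one point intersection} it equals $\{v\}$ for some vertex $v$ external in both, and the trivial length-zero path at $v$ is valid, so $p_{j_1,j_2}:=v$ works; otherwise any valid path yields a candidate starting vertex and uniqueness (below) forces all valid paths to share it.

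For uniqueness, suppose valid paths $p_1,p_2$ begin at distinct $u_1,u_2\in\X(I)_{j_1}$ and terminate at $w_1,w_2\in\X(I)_{j_2}$. After truncating each $p_i$ at its first entry into $\X(I)_{j_2}$, the interior of $p_i$ avoids $\X(I)_{j_1}\cup\X(I)_{j_2}$. A direct check shows $u_i$ is external in $\X(I)_{j_1}$: otherwise $u_i\in\ppb{c}{a_{j_1}}$ for some $c\in\S(I)_{j_1}$ and $p_i$, starting strictly on $a_{j_1}$'s side of $c$ but having interior disjoint from $c\subseteq\X(I)_{j_1}$, could never reach the opposite side. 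Symmetrically each $w_i$ is external in $\X(I)_{j_2}$. Lemma \ref{external are connected} then supplies $c\in\S(I)_{j_1}$ with $u_1,u_2\in c$ and $c'\in\S(I)_{j_2}$ with $w_1,w_2\in c'$, and Lemma \ref{one point intersection} gives $|c\cap c'|\le 1$.

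From the planar embedding, $c$ and $c'$ bound closed disks $\overline{D_c}\ni a_{j_1}$ and $\overline{D_{c'}}\ni a_{j_2}$ meeting in at most one (pinch) point, and every other $a_j$ lies in the complementary annular region $\Omega$. By iterated swapping at shared interior vertices (exchanging $w_1\leftrightarrow w_2$ if needed) I would arrange $p_1,p_2$ to be internally vertex-disjoint. Passing to the indiscernible sub-sequence $I'=(a_j)_{j\in J'}$ with $J'=\{j\in J:a_j\notin c\cup c'\cup p_1\cup p_2\}$, cofinite and still dense in $J$, the paths $p_1,p_2$ cut $\Omega$ into two closed strips $H_1,H_2$ whose boundaries $d_1,d_2$ are simple cycles containing no $a_j$ for $j\in J'$; in the degenerate case $w_1=w_2$ a ``theta'' decomposition of $\Omega$ via $p_1\cup p_2\cup c$ gives an analogous structure.

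Now I would apply Lemma \ref{alternations} separately to $d_1,d_2$ with the sequence $I'$. Each $d_i$ separates $H_i$ from a region containing both $a_{j_1}$ and $a_{j_2}$, so the induced partition of $J'$ has at least two elements on the non-$H_i$ side; since $I'$ is free (inheriting freeness from $I$) case (b) of Lemma \ref{alternations} is excluded, and case (a) forces $|H_i\cap I'|\le 1$. Hence $|(H_1\cup H_2)\cap I'|\le 2$, contradicting that $I'\setminus\{a_{j_1},a_{j_2}\}$ is infinite and entirely contained in $H_1\cup H_2$. The most delicate point is the disjointness arrangement: should iterated swapping fail, a single cut vertex $z\in\Omega$ must separate $\{u_1,u_2\}$ from $\{w_1,w_2\}$ in the exterior graph, and an analogous alternations argument on the simple cycle built from an arc of $c$ and the two halves of $p_1,p_2$ meeting at $z$ closes the argument.
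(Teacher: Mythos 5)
Your argument runs on the same engine as the paper's, namely Lemma~\ref{alternations} applied (through freeness of $I$) to cycles built from arcs of a cycle in $\S(I)_{j_1}$ and from the competing paths. The decomposition you choose, however, is genuinely different. The paper fuses the two paths into a single arc $q$ from $v_1$ to $v_2$ with interior outside $\X(I)_{j_1}$ (when $p_1,p_2$ are disjoint it uses Lemma~\ref{external are connected} on the endpoints $w_1,w_2$ to close the gap through $c'$; when they meet it reroutes inside $p_1\cup p_2$), then cuts the exterior of the disk bounded by $c=r*r'$ into exactly two regions by $d_1=r*q$ and $d_2=r'*q^{-1}$, and concludes from $|\nnb{d_i}{a_{j_1}}\cap I|\le 1$. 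You keep $p_1$ and $p_2$ separate, introduce the second cycle $c'\in\S(I)_{j_2}$, and cut the region $\Omega$ between $c$ and $c'$; the contradiction is the same. This is a legitimate route and arguably more visually transparent, but it forces the extra bookkeeping you are handwaving through. Three specific points need care. First, ``iterated swapping'' does not by itself produce internally disjoint paths; the clean statement is Menger's theorem on the finite graph $p_1\cup p_2$, which either yields the two disjoint paths or a cut vertex $z$ --- and in the cut-vertex case the cycle you propose is literally the paper's $d_1=r*q$ with $q$ the concatenation through $z$, so you should just say that. Second, when $\X(I)_{j_1}\cap\X(I)_{j_2}=\{v\}$ (allowed by Lemma~\ref{one point intersection}) the cycles $c$ and $c'$ meet at $v$, $\Omega$ is a pinched region, and two internally disjoint paths cut it into \emph{three} complementary pieces, not two; the alternations count still lands a contradiction, but your ``two closed strips'' assertion is false as stated in that case. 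Third, the degenerate competing path consisting of the single vertex $v$ (which is precisely what forces the existence clause to defer to uniqueness) collapses one of your boundary cycles and needs a separate line. None of this breaks the idea, but the paper's single-cut $q$ avoids the annulus, the pinch, and the theta case all at once, which is why its version is shorter.
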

  \begin{proof}
  
  If $\S(I)_{j}=\emptyset$, this is trivial.
  Suppose the condition does not hold and $p_{1}$ and $p_{2}$ minimal paths form $\X(I)_{j_{1}}$ to $\X(I)_{j_{2}}$ (i.e. no subpath has the same property) starting at distinct external vertices, $v_{1}$ and $v_{2}$ respectively and ending at external vertices $w_{1},w_{2}$ of $\X(I)_{j_{2}}$.
  
  Using Lemma \ref{external are connected} in case $p_{1}$ and $p_{2}$ do not intersect, we can find some simple path $q$ between $v_{1}$ and $v_{2}$ whose interior lies outside of $\X(I)_{j_{1}}$.

  By Lemma \ref{external are connected}, there must be two non-degenerate paths $r,r'$ contained in $\X(I)_{j_{1}}$ between $v_{1}$ and $v_{2}$ with $r*r'\in\S(I)_{j_{1}}$. Without loss of generality $d_{1}:=r*q$ and $d_{2}:=r'*q^{-1}$ are cycles.
  
  So for any $i=1,2$ we have that $\{j\in J\,|a_{j}\in\ppb{d_{i}}{a_{j_{1}}}\}$ is equal to either the whole $J$ or  $\{a_{j}\}_{j\in J\setminus\{j_{3}\}}$ for some $j_{3}\in J\setminus\{j_{1}\}$. The latter cannot hold, since it implies that $\{j\,| \,a_{j}\in\ppb{d_{3-i}}{a_{j_{1}}}\}=J\setminus\{j_{1}\}$, i.e, $d_{3-i}\in\S(I)_{j_{1}}$. The former implies that $d_{2}$ separates $\{a_{j}\}_{j\in J}$ into two sets of cardinality at least two, contradicting Lemma \ref{alternations}.
  \end{proof}

  \begin{lemma}
  \label{grapes} Assume that for any distinct $j,j'\in J$ there is a path between the sets $\X(I)_{j}$ and $\X(I)_{j'}$. Then there is a (unique) vertex $\nod(I)$ such that for any distinct $j,j'\in J$
  any path from $\X(I)_{j}$ to $\X(I)_{j'}$ passes through $v$.
  \end{lemma}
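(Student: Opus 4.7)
The plan is to show that the pivot vertex $p_{j_1,j_2}\in \X(I)_{j_1}$ supplied by the preceding lemma depends only on $j_1$, and then to use Lemma \ref{one point intersection} to collapse the resulting vertices $p_j$ into a single common vertex $\nod(I)$ lying in every $\X(I)_j$. Uniqueness is immediate from the characterizing property, so everything rests on existence.

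First, for distinct $j_1,j_2,j_3\in J$ I would show $p_{j_1,j_2}=p_{j_1,j_3}$. Choose a path $q'$ from $\X(I)_{j_1}$ to $\X(I)_{j_2}$ meeting $\X(I)_{j_1}$ only at its endpoint $p_{j_1,j_2}$, a path $q''$ from $\X(I)_{j_2}$ to $\X(I)_{j_3}$, and a connector $r$ inside $\X(I)_{j_2}$ linking the far endpoint of $q'$ to the near endpoint of $q''$; connectedness of $\X(I)_{j_2}$ comes from the fact that each term $\ppb{c}{a_{j_2}}\cup c$ of the union in Definition \ref{Gj} contains $a_{j_2}$. Let $w:=q'\ast r\ast q''$. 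By Lemma \ref{one point intersection} the intersection of $q'\ast r$ with $\X(I)_{j_1}$ collapses to $\{p_{j_1,j_2}\}$: any additional vertex would lie in $\X(I)_{j_1}\cap\X(I)_{j_2}$, and the length-zero path at such a vertex, combined with the preceding lemma, forces it to equal $p_{j_1,j_2}$ itself. Extracting from $w$ the simple subpath starting at its last visit to $\X(I)_{j_1}$ produces a path from $\X(I)_{j_1}$ to $\X(I)_{j_3}$ with interior disjoint from $\X(I)_{j_1}$, which by the preceding lemma starts at $p_{j_1,j_3}$; provided $q''$ can be chosen with interior disjoint from $\X(I)_{j_1}$, this gives $p_{j_1,j_2}=p_{j_1,j_3}$.

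The obstruction is the possibility that every path $\X(I)_{j_2}\to\X(I)_{j_3}$ is forced to traverse $\X(I)_{j_1}$. Here I would invoke indiscernibility of $I$: transporting the index $j_1$ along order-preserving automorphisms, the same obstruction propagates with any index $j_k$ of the appropriate order type in place of $j_1$, so a single such path would have to meet $\X(I)_{j_k}$ for infinitely many $k$. Pigeonhole on the finitely many vertices of this path produces two indices $k\neq k'$ and a common vertex in $\X(I)_{j_k}\cap \X(I)_{j_{k'}}$, and Lemma \ref{one point intersection} together with indiscernibility then forces a single vertex common to all $\X(I)_{j_k}$, which already is the desired $\nod(I)$. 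In the remaining (generic) case we may write $p_j:=p_{j,j'}$; applying the symmetric argument in the second slot yields $p_{j',j}=p_{j'}$, and the length-zero path at any vertex of $\X(I)_j\cap \X(I)_{j'}$ identifies $p_j=p_{j'}$ whenever this intersection is nonempty. Indiscernibility propagates this collapse across all pairs, producing a single vertex $\nod(I)\in\bigcap_j \X(I)_j$.

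Finally, to see that every path from $\X(I)_j$ to $\X(I)_{j'}$ passes through $\nod(I)$, extract the subpath from its last visit to $\X(I)_j$; the preceding lemma forces it to start at $p_j=\nod(I)$. The main obstacle will be the middle paragraph — either carrying out the path surgery to obtain a $q''$ avoiding $\X(I)_{j_1}$ in its interior, or extracting $\nod(I)$ directly from the failure case via pigeonhole and Lemma \ref{one point intersection}. This is where the tight geometric control on pairwise intersections must be balanced against the flexibility in choosing the connecting paths, and where indiscernibility of $I$ rather than mere planarity does the essential work.
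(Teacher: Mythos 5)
Your approach does not match the paper's, and the gap is not merely in the middle paragraph you flag: the final conclusion you aim for, that $\nod(I)\in\bigcap_{j}\X(I)_{j}$, is stronger than the lemma asserts and is false in general. By indiscernibility the family $\{\X(I)_{j}\}_{j}$ either has all pairwise intersections nonempty or all empty; your argument only covers the first regime. In the second — the generic one for the "grapes" picture — the pivot vertex $p_{j,j'}\in\X(I)_{j}$ of the preceding lemma and the symmetric $p_{j',j}\in\X(I)_{j'}$ are genuinely distinct, and $\nod(I)$ must sit strictly on a bridge path between them, outside every $\X(I)_{j}$. Your machinery of collapsing pivot vertices via length-zero paths at intersection points then produces nothing, so the existence of a single common cut vertex is never established.

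The paper's proof takes a structurally different route that handles the disjoint case. It forms the bridge set $B_{1,2}=\X(I)_{1}\cup\X(I)_{2}\cup\mathcal{Q}_{1,2}$, shows that each $\X(I)_{j}$ ($j>2$) attaches to $B_{1,2}$ at a single vertex $u_{j}$ (via Lemma \ref{one point intersection} and Lemma \ref{external are connected}), uses a distance-indiscernibility argument to place $u_{j}$ on a bridge path $\mathcal{Q}_{1,2}$, and then — this is the crux that your plan has no analogue of — rules out the $u_{j}$'s being pairwise distinct by invoking the cage machinery: if they were distinct, indiscernibility of $(q_{j}u_{j})_{j>2}$ would yield a cage $\C$ (Lemma \ref{indiscernible cycles}) with $u_{j}\in\cc_{j}$; Lemma \ref{pole crossing} then forces $d_{j}\subseteq int(\C)$, so $a_{j}\in int(\C)$ while $a_{1},a_{2}\in ext(\C)$, contradicting Lemma \ref{alternations} because $I$ is free. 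Your pigeonhole paragraph gestures toward a similar contradiction but never engages any of Lemmas \ref{indiscernible cycles}, \ref{pole crossing} or \ref{alternations}, and the leap from "two sets $\X(I)_{j_{k}},\X(I)_{j_{k'}}$ share a vertex" to "one vertex is common to all" is not supported — Lemma \ref{one point intersection} bounds pairwise intersections but says nothing about them coinciding across pairs. In short, you would need to rebuild the bridge-and-cage argument; the pivot-collapse approach does not reach the disjoint case at all.
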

  \begin{proof}
  
  For distinct $j,j'\in J$ let $B_{j,j'}$ be the union of $\X(I)_{j}\cup\X(I)_{j'}$ and   the collection $\mathcal{Q}_{j,j'}$ of all the paths between $\X(I)_{j}$ and $\X(I)_{j'}$ intersecting $\X(I)_{j}\cup\X(I)_{j'}$ only at the endpoints. Notice that any automorphism of $\mathbb{G}$ fixing $a_{j}$ and $a_{j'}$ must preserve $\X(I)_{j}$ and $\X(I)_{j'}$ set-wise and thus $B_{j,j'}$.
  
  We may assume $\N\subseteq J$. For $j>2$ let $V_{j}$ the collection of vertices in $B_{1,2}$ that can be joined to $\X(I)_{j}$ by a path whose interior lies outside of $B_{1,2}$.
  
  We claim that $|V_{j}|=1$. Indeed, suppose $V_{i}$ contains distinct vertices $v_{1},v_{2}$. By Lemma \ref{one point intersection} we may assume $v_{1}\nin\X(I)_{1}$. Since any two external vertices of $\X(I)_{j}$ are connected by Lemma \ref{external are connected}, we easily find a path in $\mathcal{Q}_{1,2}$ that is closer to $\X(I)_{j}$ than $v_{1}$ along a path between $v_{1}$ and $\X(I)_{j}$, contradicting $v_{1}\in V_{j}$. Write $\{u_{j}\}=V_{j}\cap B_{1,2}$.
  
  We must have $u_{j}\in\mathcal{Q}_{1,2}$ since otherwise
  $d(\X(I)_{j},\X(I)_{1})\neq d(\X(I)_{j},\X(I)_{2})$, contrary to indiscernibility. Finally, we claim that $u_{j}$ is a constant $u$ for $j>2$.
  Suppose not. Since $u_{j}$ is easily seen to be definable from $a_{1},a_{2},a_{j}$, in that case all $u_{j}$ must be different for $j>2$.
  
  Choose some $q_{j}\in\mathcal{Q}_{1,2}$ with $u_{j}\in q_{j}$.We may assume $(q_{j}u_{j})_{j>2}$ is indiscernible. Using the fact that all $q_{j}$, $j>2$ have the same endpoints one can see from the proof of Lemma \ref{indiscernible cycles} that $u_{j}\in\cc_{j}$, where $\C=(\gamma_{j})_{j>2}$ with $\gamma_{j}$ is a subpath of $q_{j}$ in constant position.
  
  Let $d_{j}\in\S(I)_{j}$ be either $a_{j}$ in case $\S(I)_{j}=\emptyset$ or a cycle in $\S(I)_{j}$ at minimal distance from $A_{1,2}$ in case $\S(I)_{j}\neq\emptyset$. We may assume  $(d_{j}q_{j})_{j\leq 3}$ all have the same type.
  
  If $d_{j}\subseteq ext(\C)$, then by Lemma \ref{pole crossing} any path from $d_{j}$ to any point in $int(\C)$ has to go through $pol(\C)\subseteq B_{1,2}$, contradicting that $V_{j}=\{u_{j}\}$.
  By Lemma \ref{one point intersection} we must have $|d_{j}\cap pol(\C)|\leq 1$ and thus $d_{j}\subseteq int(\C)$, which implies $a_{j}\in int(\C)$ for $j\geq 3$. Since $a_{1},a_{2}\in ext(\C)$, this contradicts Lemma \ref{alternations}.

  \newcommand{\xx}[0]{\X(I)}
  We claim that for any $2<j_{1}<j_{2}$ any path from $X(I)_{j_{1}}$ to $X(I)_{j_{2}}$ must pass through $u$. We can go from $a_{j_{1}}$ to $a_{1}$ by a first using a shortest path $r_{1}$ to $u$ from $a_{j_{1}}$ and then moving inside a path $\mathcal{Q}_{1,2}$ to $a_{1}$. The result is in $\mathcal{Q}_{j_{1},1}$.
  
  Let $r_{2}$ a path from $a_{j_{2}}$ to $u$ conjugate to $r_{1}$ over $a_{1},a_{2}$. If a path from $a_{j_{2}}$ to $a_{j_{1}}$ that does not go through $u$ exists, then either $r_{2}$ does not have minimal length among all paths from $a_{j_{2}}$ to $B_{1,j_{1}}$ or there are two points in $B_{1,j_{1}}$ that are the starting points of paths to $a_{j_{2}}$ with interior outside of $B_{1,j_{1}}$, which contradicts the fact that any automorphism of $\mathbb{G}^{\iota}$ fixing $a_{j_{1}},a_{j_{2}}$ and taking $a_{2}$ to $a_{j_{1}}$ must take $B_{1,2}$ to $B_{1,j_{1}}$.
  
  So for any distinct $j_{1},j_{2},j_{3}$ in $J$ there exist a vertex that must lie in any path between $\X(I)_{j_{k}}$ and $\X(I)_{j_{k'}}$ for any distinct $k,k'\in\{1,2,3\}$. Some simple considerations regarding the distance of such vertex from $\X(I)_{j_{k}}$ yield that in fact said vertex must be the same for any such triple.
  \end{proof}
  
  \begin{definition}
  We let $nod(I)$ equal to the vertex $u$ provided by the previous lemma in case $\X(I)_{j}$ and $\X(I)_{j'}$ are connected by some path for distinct $j,j'$ and $\emptyset$ otherwise.
  \end{definition}
  
  \newcommand{\T}[0]{\mathcal{T}}

  \begin{definition}
  \label{free envelope}Suppose we are given a free indiscernible sequence of elements $I=(a_{j})_{j\in J}$. Let $\H(I)_{j}$ the collection of all vertices connected to $X(I)_{j}$ by a path that does not pass through $nod(I)$.
  
  We define $env_{I}(a_{j})$ as the union of all vertices that are either in $\H(I)_{j}$ or in $\nnb{c}{a_{j'}}$ for some cycle contained in $\H(I)_{j}\cup nod(I)$ and some (equivalently, any) $j'\in J\setminus\{j\}$. Finally, we let $env(I):=\bigcup_{j\in J}env_{I}(a_{j})$.
  \end{definition}

  \begin{observation}
  \label{cycles}Suppose that $c$, $c'$ and $d$ are cycles and $a,a',b,v$ vertices such that $a,a'\nin c\cup c'$ and:
  \begin{enumerate}
  \item  \label{aa} $(\ppb{c}{a}\cup c)\cap(\ppb{c'}{a'}\cup c')\subseteq\{v\}$
  \item \label{cc} $d$ is connected to $c'$ by a path $p$ that does not go through $v$
  \item \label{dd} $c\cap d\subseteq \{v\}$
  \end{enumerate}
  Then $\ppb{c}{a}\cup c\subseteq\ppb{d}{b}\cup d$ or $(\ppb{c}{a}\cup c)\cap(\ppb{d}{b}\cup d)\subseteq\{v\}$.
  \end{observation}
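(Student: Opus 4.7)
The plan is to show that $(\ppb{c}{a}\cup c)\setminus\{v\}$ lies entirely within one side of $d$ together with $d\setminus\{v\}$; the required dichotomy in the conclusion then follows by considering whether the given $b$ lies on that side or on the opposite side.

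From hypothesis~(\ref{dd}) we have $|c\cap d|\leq 1$, so by Remark~\ref{non crossing} the cycles $c$ and $d$ do not cross. Property~(\ref{propB}) then places $c\setminus\{v\}$ on a single side of $d$, and by property~(\ref{propC}) the sides of $c$ and $d$ nest: there is some $b_{0}\notin d$ with either $\ppb{c}{a}\subseteq\ppb{d}{b_{0}}$ (the ``good'' nesting) or $\nnb{c}{a}\subseteq\ppb{d}{b_{0}}$ (the ``bad'' nesting, in which $d\setminus\{v\}\subseteq\ppb{c}{a}$ and the inner disk $\nnb{d}{b_{0}}$ sits inside $\ppb{c}{a}$).

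In the good case, since $c\setminus\{v\}$ is the topological boundary of $\ppb{c}{a}$ and lies on a single side of $d$, it must lie on the same side as $\ppb{c}{a}$, so $\ppb{c}{a}\cup c\setminus\{v\}\subseteq\ppb{d}{b_{0}}\cup d$. For any $b$, either $b\sim_{d}b_{0}$, in which case $\ppb{d}{b}\cup d=\ppb{d}{b_{0}}\cup d$ and the containment holds, or $b\nsim_{d}b_{0}$, in which case $\nnb{d}{b}=\ppb{d}{b_{0}}$ and the intersection $(\ppb{c}{a}\cup c)\cap(\ppb{d}{b}\cup d)$ reduces to $c\cap d\subseteq\{v\}$.

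The hard part will be ruling out the bad case, and here I would combine hypotheses~(\ref{aa}) and~(\ref{cc}). From~(\ref{aa}), $\ppb{c'}{a'}\cup c'$ is contained in $\nnb{c}{a}\cup\{v\}$, which in the bad case sits inside $\ppb{d}{b_{0}}\cup\{v\}$, so $c'$ lies on the outer side of $d$, while $d\setminus\{v\}$ lies on the opposite side of $c$ from $c'$. The path $p$ from~(\ref{cc}) then runs from a vertex of $d\setminus\{v\}$ (on the $a$-side of $c$) to a vertex of $c'\setminus\{v\}$ (on the non-$a$-side) while avoiding $v$, and by property~(\ref{propA}) applied first to $c$ and then to $c'$, $p$ must contain a nontrivial subpath inside $c\setminus\{v\}$ and enter $\ppb{c'}{a'}\cup c'$ for the first time at a vertex of $c'\setminus\{v\}$. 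The plan is then to splice these traversed arcs together with appropriate pieces of $p$ and $d$ into a single cycle, and invoke Lemma~\ref{non touching} to isolate a complementary region whose vertices contradict either~(\ref{aa}) or the non-crossing of $c$ and $d$. Making this splicing explicit and identifying the right complementary region is where the bulk of the combinatorial work sits.
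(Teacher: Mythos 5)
Your proof correctly reduces the observation to showing that $d\setminus\{v\}$ lies in $\nnb{c}{a}$, and your handling of the ``good'' case (where this already holds) is fine and matches what the paper would need there. However, the argument is incomplete exactly where it matters: you never actually rule out the ``bad'' case $d\setminus\{v\}\subseteq\ppb{c}{a}$. You sketch a plan involving property~(\ref{propA}), arc-splicing, and an appeal to Lemma~\ref{non touching}, but you explicitly stop short of carrying it out (``making this splicing explicit \dots is where the bulk of the combinatorial work sits''). That gap is precisely the content of the observation, so the proof as written does not establish the result.

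For comparison, the paper dispatches this case in one line, and by a quite different and much more direct route than your splicing plan: by~(\ref{aa}), $c'\setminus c$ lies in $\nnb{c}{a}$; by~(\ref{dd}) and Remark~\ref{non crossing}, $d\setminus c$ also lies entirely on one side of $c$; and by~(\ref{cc}), the path $p$ from $d$ to $c'$ avoiding $v$ forces $d\setminus c$ to lie on the \emph{same} side of $c$ as $c'\setminus c$, i.e.\ in $\nnb{c}{a}$. Once that is in hand, the dichotomy in the conclusion follows essentially as in your good-case analysis. So rather than constructing auxiliary cycles and invoking Lemma~\ref{non touching}, you should use hypothesis~(\ref{cc}) directly to transport the side information from $c'$ to $d$. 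As a side remark, the step ``same side, by~(\ref{cc})'' implicitly requires that the connecting path $p$ not re-enter $\ppb{c}{a}$ through $c\setminus\{v\}$; if you do flesh this out, that is the point that deserves a sentence of justification, but it is a far shorter route than the one you propose.
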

  \begin{proof}
  Suppose the conclusion does not hold. By \ref{aa} and \ref{dd} we have that $c'\setminus c$ and $d\setminus c$ lie entirely on one side of $c$ and in fact the same side, by \ref{cc}. By \ref{aa} this side is $\nnb{c}{a}$. It then follows that $\ppb{c}{a}\cup c\subseteq\ppb{d}{b}\cup d$ or $(\ppb{c}{a}\cup c)\cap(\ppb{d}{b}\cup d)\subseteq\{v\}$
  \end{proof}
  
  We note the following easy fact:
  \begin{observation}
  \label{not through node}If $a\in env_{I}(a_{j})$ is connected with $b$ by a path that does not go through $nod(I)$, then $b\in env_{I}(a_{j})$.
  \end{observation}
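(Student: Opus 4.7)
The plan is a straightforward case analysis on the structure witnessing $a\in env_{I}(a_{j})$, using that Definition \ref{free envelope} decomposes $env_{I}(a_{j})$ as $\H(I)_{j}$ together with the sets $\nnb{c}{a_{j'}}$ for cycles $c\subseteq\H(I)_{j}\cup\{nod(I)\}$ and indices $j'\neq j$. A preliminary point, which I will use silently: since a path contains its endpoints, the hypothesis that the given path from $a$ to $b$ avoids $nod(I)$ forces $b\neq nod(I)$.

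If $a\in\H(I)_{j}$, pick a witnessing path $q$ from $a$ to $\X(I)_{j}$ missing $nod(I)$; concatenating $q$ with the given path $p$ from $a$ to $b$ produces a walk from $b$ to $\X(I)_{j}$ whose vertex set avoids $nod(I)$, and reducing this walk to a simple path preserves that property, so $b\in\H(I)_{j}\subseteq env_{I}(a_{j})$. Otherwise $a\in\nnb{c}{a_{j'}}$ for some cycle $c\subseteq\H(I)_{j}\cup\{nod(I)\}$ and some $j'\neq j$. Now I trace $p$ from $a$ to $b$: if every vertex of $p$ lies in $\nnb{c}{a_{j'}}$ then $b\in\nnb{c}{a_{j'}}\subseteq env_{I}(a_{j})$, and otherwise $p$ first reaches $c$ at some vertex $v\in c\subseteq\H(I)_{j}\cup\{nod(I)\}$. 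Because $p$ avoids $nod(I)$ we must have $v\in\H(I)_{j}$; the subpath of $p$ from $v$ to $b$ still avoids $nod(I)$, and concatenating it with a witnessing path from $v$ to $\X(I)_{j}$ places $b\in\H(I)_{j}$ by the same walk-to-path reduction as in the first case.

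This is essentially definition-chasing once one observes that the hypothesis rules out $b=nod(I)$ and that a walk avoiding a fixed vertex reduces to a simple path avoiding the same vertex. I do not see any real obstacle: the case split is forced by the definitions of $\H(I)_{j}$ and $env_{I}(a_{j})$, and no earlier structural results from the paper (such as the analysis of cages, nests, or the lemmas on $\nod$) need to be invoked.
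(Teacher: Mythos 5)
Your argument is correct, and the paper in fact omits a proof for this observation, simply labelling it an ``easy fact.'' Your case split on whether $a\in\H(I)_{j}$ or $a\in\nnb{c}{a_{j'}}$ is the natural decomposition forced by Definition \ref{free envelope}, and the walk-to-path reduction and avoidance of $nod(I)$ are handled correctly. The only step you leave implicit is that if the path $p$ leaves $\nnb{c}{a_{j'}}$ then its first vertex outside that set must lie on $c$ itself (rather than jumping directly to $\ppb{c}{a_{j'}}$); this is exactly property \ref{propA} applied to a single edge, and citing it would make the argument fully airtight, but it is the kind of step the paper itself treats as immediate.
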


  \begin{lemma}
  \label{intersection at the node}The sets $env_{I}(a_{j})$ and $env_{I}(a_{j'})$
  are not connected by a path that does not pass through $nod(I)$.
  \end{lemma}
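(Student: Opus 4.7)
The plan is to argue by contradiction: suppose $p$ is a path from some $v\in env_{I}(a_j)$ to some $w\in env_{I}(a_{j'})$ avoiding $nod(I)$. The strategy will be to reduce to a path between $\H(I)_{j}$ and $\H(I)_{j'}$ avoiding $nod(I)$; concatenating such a path with the defining $\H$-paths from its endpoints to $\X(I)_{j}$ and $\X(I)_{j'}$ produces a forbidden path from $\X(I)_{j}$ to $\X(I)_{j'}$ avoiding $nod(I)$, contradicting Lemma \ref{grapes}. The degenerate case $nod(I)=\emptyset$, in which $\X(I)_{j}$ and $\X(I)_{j'}$ lie in distinct connected components of $\G^{\iota}$, will be handled by exactly the same reduction.

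I first observe $\H(I)_{j}\cap\H(I)_{j'}=\emptyset$: any common vertex would, by concatenating the two $\H$-paths it determines, give a forbidden path from $\X(I)_{j}$ to $\X(I)_{j'}$ avoiding $nod(I)$. Now suppose $v\in env_{I}(a_{j})\setminus\H(I)_{j}$. By Definition \ref{free envelope} there is a cycle $c\subseteq\H(I)_{j}\cup nod(I)$ with $v\in\nnb{c}{a_{j^{*}}}$ for some (equivalently, any) $j^{*}\neq j$. Since $v\notin\X(I)_{j}\subseteq\H(I)_{j}$, the cycle $c$ does not belong to $\S(I)_{j}$, so the ``equivalently any'' clause forces all of $I$ onto the outer side of $c$ and $\nnb{c}{a_{j^{*}}}$ is an $I$-free pocket. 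I then claim $env_{I}(a_{j'})\cap\nnb{c}{a_{j^{*}}}=\emptyset$. Any $x\in\H(I)_{j'}$ in the pocket is joined to $\X(I)_{j'}$ by a path avoiding $nod(I)$, which must cross $c\setminus nod(I)\subseteq\H(I)_{j}$ and yield a vertex in $\H(I)_{j}\cap\H(I)_{j'}$, a contradiction. For $x\in env_{I}(a_{j'})\setminus\H(I)_{j'}$, $x$ lies on the appropriate side of some cycle $c'\subseteq\H(I)_{j'}\cup nod(I)$; the previous observation gives $c\cap c'\subseteq\{nod(I)\}$, so Remark \ref{non crossing} says $c$ and $c'$ do not cross, and the previous sub-case places $c'\setminus nod(I)$ on the non-pocket side of $c$. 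A short topological argument in $S^{2}$ then shows that the side of $c'$ containing $x$ remains on the non-pocket side of $c$, so $x\notin\nnb{c}{a_{j^{*}}}$.

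Given the claim, $p$ must cross $c$, and since $p$ avoids $nod(I)$ while $c\setminus nod(I)\subseteq\H(I)_{j}$, the path meets $\H(I)_{j}$; replacing $p$ by its tail from such a crossing vertex, we may assume $v\in\H(I)_{j}$. A symmetric argument at the other end yields $w\in\H(I)_{j'}$, completing the reduction. The hardest step is the pocket analysis in the second sub-case of the claim: it amounts to verifying that two cycles meeting in at most one vertex and lying on opposite sides of one another have mutually disjoint pocket regions. This will be handled by combining Remark \ref{non crossing} with a direct planarity argument in $S^{2}$ using that any cycle in the ``outer'' hemisphere has both of its complementary components contained, modulo the shared vertex, in that hemisphere.
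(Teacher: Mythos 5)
Your overall strategy is a legitimate reorganization of the paper's argument, but it has a genuine gap at its central step. The paper uses Observation \ref{not through node} to replace the hypothetical $nod(I)$-avoiding path by a single vertex $b\in env_{I}(a_{j_{1}})\cap env_{I}(a_{j_{2}})$, and then cases on how $b$ sits relative to the sets $\X(I)_{j_i}$ and the pocket cycles, with Observation \ref{cycles} doing the technical work. You instead try to push the two endpoints of the path into $\H(I)_{j}$ and $\H(I)_{j'}$; for this you must show that the pocket $\nnb{c}{a_{j^{*}}}\subseteq env_{I}(a_{j})$ is disjoint from $env_{I}(a_{j'})$, and this is where the proposal breaks down.

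The topological fact you rely on --- ``any cycle in the outer hemisphere has both of its complementary components contained, modulo the shared vertex, in that hemisphere'' --- is simply false in $S^{2}$. If $c'$ lies on the non-pocket side $\ppb{c}{I}$ of $c$, then exactly one side of $c'$ is contained in $\ppb{c}{I}$; the other side contains $\nnb{c}{I}$. Topology alone therefore does not tell you which side of $c'$ is its own pocket $\nnb{c'}{a_{j^{**}}}$, and the nested configuration $\nnb{c}{a_{j^{*}}}\subseteq\nnb{c'}{a_{j^{**}}}$ is not excluded by planarity. To rule it out you need to re-use the $\H$-disjointness in a way your plan does not anticipate: in the nested case $c\setminus nod(I)\subseteq\nnb{c'}{a_{j^{**}}}$, and since $c\setminus nod(I)\subseteq\H(I)_{j}$ is joined to $a_{j}\in\ppb{c'}{a_{j^{**}}}$ by a path avoiding $nod(I)$, that path must cross $c'\setminus nod(I)\subseteq\H(I)_{j'}$, contradicting $\H(I)_{j}\cap\H(I)_{j'}=\emptyset$. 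The same kind of hidden step is needed in sub-case A, where you assume the $\H$-path from $x$ to $\X(I)_{j'}$ exits the pocket; that requires first showing $\X(I)_{j'}\cap\nnb{c}{a_{j^{*}}}=\emptyset$ (true because any $c''\in\S(I)_{j'}$ satisfies $c''\cap c\subseteq\{nod(I)\}$ by $\H$-disjointness and hence cannot cross $c$, by Remark \ref{non crossing}). So the skeleton of your reduction is sound, but the ``short topological argument'' is not purely topological, and the missing pieces are essentially what the paper encapsulates in Observation \ref{cycles} and its case analysis on $b$.
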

  \begin{proof}
  Otherwise there are distinct indices $j_{1}$ and $j_{2}$ so that $b\in env_{I}(a_{j_{1}})\cap env_{I}(a_{j_{2}})$.
  
  Suppose that $b$ is separated from $I$ by cycles $d_{1}$ and $d_{2}$, where $d_{i}$ is connected to some $c_{i}\in\S(I)_{j_{i}}$ by some path $p_{i}$ that does not pass through $nod(I)$.
  
  From Observation \ref{cycles} applied to $d_{1},d_{2},a_{2},b$ with $b$ in place of $a$ and $w$ it follows that $\ppb{d_{1}}{b}\cup d_{1}\subseteq\ppb{d_{2}}{b}\cup d_{2}$. The symmetric argument yields the opposite inclusion so that $d_{1}=d_{2}$, contradicting the defining property of $nod(I)$.
  
  
  Similarly, if $d_{2}$ as above exists but there is also $d_{1}\in\S(I)_{j_{1}}$ such that $b\in\ppb{d_{1}}{a_{j_{1}}}\cup d_{1}$, then an analogous application of \ref{cycles} yields $a_{j_{1}}\in\ppb{d_{1}}{a_{j_{1}}}\subseteq\ppb{d_{2}}{b}$, contradicting the fact that $d_{2}$ separates $b$ and $I$.
  
  If $b\in\ppb{d_{1}}{a_{j_{1}}}\cap\ppb{d_{2}}{a_{j_{2}}}$ where $d_{i}\in\S(I)_{j_{i}}$, then $d_{1}$ and $d_{2}$ have to cross, contradicting Lemma \ref{one point intersection}. The remaining cases are straightforward from the defining property of $nod(I)$.
  \end{proof}
  
  The following can be easily checked by inspecting the definitions above:
  \begin{lemma}
  Given $b\in env_{I}(a_{j})$ there exists some formula $\phi(x,y;z)$ such that $\models\phi(a_{j},a_{j'},b)$ for any $j'\in J\setminus\{j\}$ and $\phi(a'_{j_{0}},a'_{j_{1}},z)$ implies $z\in env_{I}(a'_{j_{0}})$ for any free indiscernible sequence $(a'_{j})_{j\in J'}$.
  \end{lemma}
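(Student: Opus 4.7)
The plan is to construct $\phi_b$ by case analysis on the position of $b$ within $env_I(a_j)$ as organized by Definition \ref{free envelope}, in each case existentially quantifying over the cycles and paths that witness $b$'s membership.

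First, I would handle the central case $b \in \X(I)_j$. Lemma \ref{merging cycles} supplies a cycle $c \in \S(I)_j$ with $b \in \ppb{c}{a_j} \cup c$; set $n = |c|$. The natural candidate for $\phi_b(x,y;z)$ asserts the existence of an $n$-cycle $\bar u$ such that $\bar u$ separates $x$ from $y$ (i.e.\ $\neg R_n(x,y,\bar u)$) while $z$ lies on the $x$-side of $\bar u$ or on $\bar u$ itself (i.e.\ $R_n(x,z,\bar u)$ or $z$ occurs among the entries of $\bar u$). Since $c$ separates $a_j$ from every $a_{j''}$ with $j'' \neq j$, $\models \phi_b(a_j,a_{j'},b)$ holds uniformly in $j' \in J \setminus \{j\}$. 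The remaining subcases $b \in \H(I)_j \setminus \X(I)_j$ (connectivity to $\X(I)_j$ via a path avoiding $nod(I)$) and $b \in env_I(a_j) \setminus \H(I)_j$ (enclosure by a cycle in $\H(I)_j \cup nod(I)$) I would handle by augmenting $\phi_b$ with existentials over a bounded-length connecting path and an additional enclosing cycle, respectively.

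Next, for the reverse implication, suppose $\phi_b(a'_{j_0}, a'_{j_1}, z)$ holds in a free indiscernible sequence $(a'_j)_{j \in J'}$, witnessed by a cycle $\bar u'$. Then $\bar u'$ separates $a'_{j_0}$ from $a'_{j_1}$, so Lemma \ref{alternations} combined with freeness of $(a'_j)$ forces the partition of $J'$ by $\bar u'$ to isolate $\{j_0\}$ or $\{j_1\}$ from the remainder; equivalently, $\bar u' \in \S(I')_{j_0} \cup \S(I')_{j_1}$. In the favorable subcase $\bar u' \in \S(I')_{j_0}$, $z$ lies in $\ppb{\bar u'}{a'_{j_0}} \cup \bar u' \subseteq \X(I')_{j_0}$, hence in $env_{I'}(a'_{j_0})$ immediately from Definition \ref{free envelope}.

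The hard part will be the side-swap subcase $\bar u' \in \S(I')_{j_1}$: here $\bar u'$ surrounds $a'_{j_1}$ while $z$ sits outside $\bar u'$ on the $a'_{j_0}$-side, and the plain cycle clause alone does not force $z \in env_{I'}(a'_{j_0})$. To resolve this, my plan is to include in $\phi_b$ an asymmetric path clause — a bounded-length path from $x$ to a vertex of the separating cycle required to stay in $\ppb{\bar u'}{x} \cup \bar u'$. In the side-swap scenario Lemma \ref{grapes} makes any path from $a'_{j_0}$ to a vertex of $\bar u' \subseteq \X(I')_{j_1}$ pass through $nod(I')$, and combined with Lemma \ref{one point intersection} (restricting $\bar u' \cap \X(I')_{j_0}$ to at most $\{nod(I')\}$) this either blocks the path altogether or forces the witnessing cycle structure to lie inside $\H(I')_{j_0} \cup nod(I')$, placing $z \in env_{I'}(a'_{j_0})$. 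Carrying out this reduction cleanly in each of the three cases is the bulk of the work.
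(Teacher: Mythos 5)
The paper offers no proof here (it says the lemma "can be easily checked by inspecting the definitions above"), so there is no paper proof to compare against; you are on your own, and your own account makes the difficulty visible. Your overall plan is sound and you have put your finger on exactly the right obstacle: with the naive formula $\exists\bar u\,[\neg R_n(x,y,\bar u)\wedge(R_n(x,z,\bar u)\vee z\in\bar u)]$, a witness $\bar u'$ for $\phi(a'_{j_0},a'_{j_1},z)$ could, by Lemma \ref{alternations} and freeness, isolate $a'_{j_1}$ rather than $a'_{j_0}$, in which case $\ppb{\bar u'}{a'_{j_0}}$ is the unbounded side and $z$ is not confined to $env_{I'}(a'_{j_0})$ at all --- $z$ could for instance be $a'_{j_2}$ for some third index. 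This side-swap is a genuine problem, not a technicality.

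The trouble is that your proposed repair does not close it. Two issues.

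First, the path clause ``a bounded-length path from $x$ to a vertex of $\bar u'$ staying in $\ppb{\bar u'}{x}\cup\bar u'$'' is not even guaranteed to be satisfiable in the forward direction. Membership of $b$ in $\ppb{c}{a_j}\cup c$ is a statement about the sphere embedding, not about graph connectivity: $a_j$ may lie in a different connected component of the graph from $c$, and then there is no such path at all, so $\models\phi_b(a_j,a_{j'},b)$ would fail and the construction collapses before the reverse direction is even reached.

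Second, even granting graph connectivity of $a_j$ to $c$, the appeal to Lemmas \ref{grapes} and \ref{one point intersection} in the side-swap case does not deliver what you claim. Lemma \ref{grapes} only says a path from $\X(I')_{j_0}$ to $\X(I')_{j_1}$ must pass \emph{through} $nod(I')$; it does not say the path is long or nonexistent, so if $nod(I')$ is within bounded distance of $a'_{j_0}$ the path clause is satisfied and blocks nothing. And the asserted fallback --- that the witnessing cycle is thereby ``forced to lie inside $\H(I')_{j_0}\cup nod(I')$'' --- is false: in the side-swap scenario $\bar u'\in\S(I')_{j_1}$, so $\bar u'\subseteq\X(I')_{j_1}$, and by Lemmas \ref{grapes} and \ref{intersection at the node} $\X(I')_{j_1}$ meets $\H(I')_{j_0}$ in at most the single vertex $nod(I')$, so $\bar u'$ (a cycle of length at least three) certainly cannot be contained in $\H(I')_{j_0}\cup nod(I')$. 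What is needed is a formula, built from the specific configuration of $b$, $a_j$, $a_{j'}$, that is asymmetric enough in $x$ versus $y$ to \emph{rule out} a witness isolating $y$, and the path gadget as stated is not sufficiently asymmetric to do this.
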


  \begin{corollary}
  \label{free envelopes and indiscernibility} For $j\in J$ the sequence $I$ is not indiscernible over any point $b\in env_{I}(a_{j})$.
  \end{corollary}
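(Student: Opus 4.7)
My plan is a short argument combining the preceding (unlabeled) lemma with Lemma \ref{intersection at the node}. Assume, for contradiction, that $I$ is indiscernible over some $b\in env_{I}(a_{j})$, and let $\phi(x,y;z)$ be the formula provided by the preceding lemma, so $\models\phi(a_{j},a_{j'},b)$ for every $j'\in J\setminus\{j\}$ and $\models\phi(a'_{j_{0}},a'_{j_{1}},z)$ forces $z\in env_{I'}(a'_{j_{0}})$ in any free indiscernible sequence $I'$.

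The main step is to transfer $\phi$ to every ordered pair of distinct indices. Since $J$ is large, and in particular has no endpoints, any distinct $j_{0},j_{1}\in J$ realize the same $2$-type over $b$ as some $(j,j')$ with $j'\in J\setminus\{j\}$: if $j_{0}<j_{1}$ pick $j'>j$, otherwise pick $j'<j$. Indiscernibility of $I$ over $b$ therefore forces $\models\phi(a_{j_{0}},a_{j_{1}},b)$ for all distinct $j_{0},j_{1}\in J$, and the second clause of the preceding lemma then yields $b\in env_{I}(a_{j_{0}})$ for every $j_{0}\in J$.

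Choosing any $j_{0}\neq j$ we obtain a common point $b\in env_{I}(a_{j})\cap env_{I}(a_{j_{0}})$, contradicting Lemma \ref{intersection at the node}: the length-zero path at $b$ joins the two envelopes without visiting any other vertex, and the proof of that lemma derives a geometric contradiction from the existence of any such common element. The delicate point of the plan is not the model-theoretic transfer but the need to invoke Lemma \ref{intersection at the node} in the strong form that forbids \emph{any} shared vertex, including the potential case $b=nod(I)$; inspection of its proof, which case-splits according to how $b$ sits in each envelope and invokes Observation \ref{cycles} together with the defining property of $nod(I)$ in each case, confirms that no such common vertex can exist, and this closes the argument.
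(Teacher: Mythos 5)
Your proof is correct and follows essentially the same route as the paper's: use the formula $\phi$ together with indiscernibility over $b$ to transfer $b$ into a second envelope $env_I(a_{j_1})$ with $j_1 \neq j_0$, then contradict Lemma \ref{intersection at the node}. The paper does the transfer with a single triple $j_0 < j_1 < j_2$ (matching the order type of $(j_0,j_2)$ with $(j_1,j_2)$) rather than re-deriving $\phi$ for every ordered pair, but the content is identical; your remark about the potential edge case $b = nod(I)$ is a genuine subtlety that the paper's one-line proof glosses over, and your observation that the proof of Lemma \ref{intersection at the node} in fact excludes any common vertex is a fair reading of it.
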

  \begin{proof}
  Fix $b\in env_{I}(a_{j_{0}})$, $j_{0}\in J$ and consider the formula $\phi(x,y,z)$ as above. Pick $j_{0}<j_{1}<j_{2}$ in $J$. Then $\phi(a_{j_{0}},a_{j_{2}},b)$ and if $I$ is indiscernible over $b$ we then have  $\phi(a_{j_{1}},a_{j_{2}},b)$, so that $b\in env_{I}(j_{1})$, contradicting \ref{intersection at the node}.
  \end{proof}
  
  \begin{corollary}
  \label{invariance free envelope} Any automorphism of $\mathbb{G}^{\iota}$ preserving $I$ setwise must preserve $env(I)$ setwise.
  \end{corollary}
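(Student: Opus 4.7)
The plan is to trace through the chain of definitions of $env(I)$ and verify at each stage that any automorphism $\sigma$ preserving $\{a_j\}_{j\in J}$ setwise acts compatibly on the intermediate objects. Write $\pi:J\to J$ for the bijection satisfying $\sigma(a_j)=a_{\pi(j)}$. The definitions are sufficiently intrinsic that the proof is essentially bookkeeping, as long as one is careful to exploit the \emph{uniqueness} statements in the lemmas of the previous section.

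First I would observe that $\S(I)_j$ is defined purely in terms of the vertex $a_j$ and the set $\{a_{j'}\}_{j'\neq j}$, both of which are preserved by $\sigma$ (as a set in the second case). Hence $\sigma(\S(I)_j)=\S(I)_{\pi(j)}$, and since $\ppb{c}{a_j}$ is a $\sigma$-equivariant notion ($\sigma(\ppb{c}{a_j})=\ppb{\sigma(c)}{a_{\pi(j)}}$), the same holds for $\X(I)_j$, namely $\sigma(\X(I)_j)=\X(I)_{\pi(j)}$.

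Next I would argue that $\sigma$ fixes $nod(I)$. This is where I'd invoke the uniqueness clause of Lemma \ref{grapes}: $nod(I)$ is the unique vertex $v$ such that, for every pair of distinct $j,j'$, every path from $\X(I)_j$ to $\X(I)_{j'}$ passes through $v$. Applying $\sigma$ to this characterization and using the first step, $\sigma(nod(I))$ satisfies the same characterization and must therefore equal $nod(I)$ (in the alternative case, where no such vertex exists, there is nothing to prove, since $nod(I)$ is set to $\emptyset$ and this is clearly invariant). As a consequence, $\H(I)_j$, which is defined as the set of vertices connected to $\X(I)_j$ by a path avoiding $nod(I)$, is also permuted equivariantly: $\sigma(\H(I)_j)=\H(I)_{\pi(j)}$.

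Finally, inspecting Definition \ref{free envelope}, $env_I(a_j)$ is built as the union of $\H(I)_j$ and all sets $\nnb{c}{a_{j'}}$ for $c\subseteq \H(I)_j\cup nod(I)$ and $j'\neq j$. Since the collection of such cycles $c$ is sent by $\sigma$ to the analogous collection for $\pi(j)$, and since $\sigma$ permutes the $a_{j'}$ with $j'\neq j$ among themselves (mapping onto those with $j'\neq\pi(j)$), we conclude $\sigma(env_I(a_j))=env_I(a_{\pi(j)})$. Taking the union over $j\in J$ gives $\sigma(env(I))=env(I)$, as required. There is no genuine obstacle here; the only point demanding slight care is invoking the uniqueness of $nod(I)$ to justify that it is individually fixed (not merely permuted) by $\sigma$.
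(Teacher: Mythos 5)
Your argument is correct, and since the paper states this corollary without proof there is nothing to compare against line by line. It is worth noting, though, that the corollary is placed immediately after the unnumbered lemma asserting that for each $b\in env_{I}(a_{j})$ there is a formula $\phi(x,y;z)$ with $\models\phi(a_{j},a_{j'},b)$ for all $j'\neq j$ and with $\phi(a'_{j_{0}},a'_{j_{1}},z)$ forcing $z\in env_{I'}(a'_{j_{0}})$ for any free indiscernible sequence; this gives the corollary in one line, since applying an automorphism $\sigma$ with $\sigma(a_{j})=a_{\pi(j)}$ to an instance $\models\phi(a_{j},a_{j'},b)$ yields $\models\phi(a_{\pi(j)},a_{\pi(j')},\sigma(b))$, whence $\sigma(b)\in env_{I}(a_{\pi(j)})$ by the second clause applied to $I$ itself. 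Your alternative is a direct unwinding of Definitions \ref{Gj} and \ref{free envelope}: you verify $\sigma(\S(I)_{j})=\S(I)_{\pi(j)}$, hence $\sigma(\X(I)_{j})=\X(I)_{\pi(j)}$, then use the uniqueness clause of Lemma \ref{grapes} to show $\sigma$ fixes $nod(I)$ (correctly handling the degenerate $nod(I)=\emptyset$ case), and finally push this through $\H(I)_{j}$ and the union defining $env_{I}(a_{j})$. This is a bit longer but avoids leaning on the formula lemma, so it is arguably the more self-contained route; your flagging of the uniqueness of $nod(I)$ as the one point of genuine content is well taken, since without it one would only get that $\sigma$ permutes the candidate nodes rather than fixing them.
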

  
  \begin{lemma}
  \label{free not indiscernible}Let $c$ be a cycle. Then either
  $env(I)$ is contained on one side of $c$ (so in particular $env(I)\cap c=\emptyset$) or  $c\subseteq env_{I}(a_{j})\cup nod(I)$ for some $j\in J$.
  In the latter case we have $b\sim_{c} b'$ for all $b,b'\in (env_{I}(a_{j})\cup nod(I))^{c}$.
  \end{lemma}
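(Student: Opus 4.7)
I would argue by case analysis on how $c$ meets the envelopes $env_I(a_j)$. The starting observation is that $c$ can meet $env_I(a_j)\setminus\{nod(I)\}$ for at most one index $j$: if vertices $v_1\in env_I(a_{j_1})\setminus\{nod(I)\}$ and $v_2\in env_I(a_{j_2})\setminus\{nod(I)\}$ both lie on $c$ with $j_1\neq j_2$, the two arcs of the simple cycle $c$ between $v_1$ and $v_2$ are internally vertex-disjoint, so at most one of them contains $nod(I)$; the other yields a path from $env_I(a_{j_1})$ to $env_I(a_{j_2})$ avoiding $nod(I)$, contradicting Lemma \ref{intersection at the node}.

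Suppose some vertex of $c$ lies in $env_I(a_j)\setminus\{nod(I)\}$ for the unique such $j$. I would then establish $c\subseteq env_I(a_j)\cup\{nod(I)\}$ by a boundary argument: no edge can connect a vertex $u\in env_I(a_j)$ to a neighbor $w\notin env_I(a_j)\cup\{nod(I)\}$. Indeed, if $u\in\H(I)_j$, then either $w=nod(I)$ or the path witnessing $u\in\H(I)_j$ extends along this edge to show $w\in\H(I)_j$; while if $u\in\nnb{c'}{a_{j'}}$ for some $c'\subseteq\H(I)_j\cup\{nod(I)\}$, then $w$ is either on $c'$ or still in $\nnb{c'}{a_{j'}}$, hence lies in $env_I(a_j)\cup\{nod(I)\}$. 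Thus, traversing $c$, one can only leave $env_I(a_j)\cup\{nod(I)\}$ by passing through $nod(I)$; since a simple cycle visits each vertex at most once, $c$ cannot both leave and return, forcing $c\subseteq env_I(a_j)\cup\{nod(I)\}$.

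For the separation statement I would apply Lemma \ref{alternations} to $I$ and $c$, using that $a_{j'}\notin c$ for $j'\neq j$ (as $a_{j'}\notin env_I(a_j)\cup\{nod(I)\}$), with freeness of $I$ ruling out the cage/nest case. If the isolated subcase holds with isolated index $j_0$, then $j_0=j$: otherwise $c\in\S(I)_{j_0}$ gives $c\subseteq\X(I)_{j_0}\subseteq env_I(a_{j_0})\cup\{nod(I)\}$, which combined with $c\subseteq env_I(a_j)\cup\{nod(I)\}$ yields $c\subseteq\{nod(I)\}$, impossible. Hence $c\in\S(I)_j$ and $\ppb{c}{a_j}\subseteq\X(I)_j\subseteq env_I(a_j)\cup\{nod(I)\}$; any $b\notin env_I(a_j)\cup\{nod(I)\}$ then lies in $\nnb{c}{a_j}$, the side common to all $a_{j'}$ with $j'\neq j$, so $b\sim_c b'$ for any two such $b,b'$. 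In the non-isolated subcase, all $a_{j'}$ with $j'\neq j$ share a side of $c$, and the key step is $\nnb{c}{a_{j'}}\subseteq env_I(a_j)$. When $c\subseteq\H(I)_j\cup\{nod(I)\}$ this is immediate from Definition \ref{free envelope}, and I would extend it to general $c\subseteq env_I(a_j)\cup\{nod(I)\}$ by using Observation \ref{cycles} to replace maximal subpaths of $c$ lying inside some $\nnb{c'}{a_{j''}}$-region (with $c'\subseteq\H(I)_j\cup\{nod(I)\}$) by arcs of the bounding $c'$, producing a cycle in $\H(I)_j\cup\{nod(I)\}$ whose non-$a_{j'}$ side contains $\nnb{c}{a_{j'}}$.

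In the complementary case, where $c$ meets no $env_I(a_j)\setminus\{nod(I)\}$, the same application of Lemma \ref{alternations} with freeness places all $a_j$ on a single side of $c$ (an isolated $j_0$ would again force $c\subseteq\X(I)_{j_0}$, contradicting the case hypothesis). By the boundary argument of the previous paragraph, every $v\in env_I(a_j)$ is connected to $a_j$ by a path inside $env_I(a_j)\cup\{nod(I)\}$, which is disjoint from $c$ (at most touching $nod(I)$, which a suitable such path avoids), so $v$ lies on the same side of $c$ as $a_j$; consequently $env(I)$ is contained in a single side of $c$ with $env(I)\cap c=\emptyset$. The main obstacle I anticipate is the non-isolated subcase of the separation argument, namely extending $\nnb{c}{a_{j'}}\subseteq env_I(a_j)$ beyond cycles in $\H(I)_j\cup\{nod(I)\}$; this requires either a careful cycle-replacement argument as sketched or a direct topological appeal to the planar embedding.
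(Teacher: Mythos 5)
Your proof takes a genuinely different route from the paper's. The paper splits the case analysis directly along the shape of Definition~\ref{free envelope}: if $c$ separates $I$ then $c\in\S(I)_{j_0}$, so the second alternative holds trivially; if $c$ is connected to some $\S(I)_j$ by a path avoiding $nod(I)$ then $c\subseteq\H(I)_j\cup nod(I)$ by unwinding the definition of $\H(I)_j$, again the second alternative; and otherwise $\S(I)_j$ and hence $\H(I)_j$ and all the attached $\nnb{d}{I}$-regions lie on the $I$-side of $c$, yielding the first alternative. You instead organize the argument around which of the pairwise-disjoint sets $env_I(a_j)$ the cycle $c$ actually meets, using the two-arcs observation together with Lemma~\ref{intersection at the node} to show it meets at most one, and then a ``no-exit-except-through-$nod(I)$'' closure property of $env_I(a_j)$ --- essentially Observation~\ref{not through node} applied edge by edge along $c$ --- to propagate containment to all of $c$. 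Both routes are sound; the paper's is shorter because it reads off the definition, while yours makes the geometric picture (each $env_I(a_j)$ is a region whose only gateway is $nod(I)$) explicit, and in the complementary case gives a somewhat cleaner path-connectivity reason why $env(I)$ sits on one side of $c$. Where you and the paper converge is on the only genuinely delicate step, the separation claim when $c\subseteq env_I(a_j)\cup nod(I)$ does \emph{not} lie in $\S(I)_j$: the paper asserts it is ``easy to see,'' and you correctly single it out as the main obstacle and sketch a cycle-replacement argument via Observation~\ref{cycles}. That sketch still needs to be carried out (one must check the spliced cycle stays simple, stays inside $\H(I)_j\cup nod(I)$, and has $I$ on the correct side), but nothing in it looks unsound. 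One small point you gloss over, as does the paper, is the possibility $a_j\in c$, which would block a direct appeal to Lemma~\ref{alternations}; this is routine to patch by applying that lemma to the subsequence $(a_{j'})_{j'\neq j}$, since your first observation already guarantees $a_{j'}\notin c$ for $j'\neq j$.
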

  \begin{proof}
  If $a_{j}\in c$ for some $j$, then $c\nsubseteq env_{I}(a_{j})$, so assume not.
  Likewise, if $c$ separates $a_{j_{0}}$ from $I\setminus\{a_{j_{0}}\}$, then $c\subseteq\S(I)_{j_{0}}$. So assume $c$ does not separate $I$. If it is connected to $\S(I)_{j}$ by a path that does not go through $nod(I)$ for some $j$, then $c\subseteq env_{I}(a_{j})$ by definition, so assume not.
  
  Then $\S(I)_{j}$ lies on the $I$-side of $c$ for all $j$ and therefore so does $\mathcal{H}(I)_{j}$ from definition \ref{free envelope}. Then for any $d\subseteq\mathcal{H}(I)_{j}\cup nod(I)$ that does not separate $I$ either $c\subseteq\nnb{d}{I}$ or $(\nnb{d}{I}\cup d)\cap\nnb{c}{I}=\emptyset$.
  It follows that all of $env(I)$ lies on the same side of $c$.
  If $c\subseteq env_{I}(a_{j})\cup nod(I)$ it is easy to see that one side of $c$ is contained in $env(I)(a_{j})$ (there are two possibilities according to whether $c$ separates $I$).
  \end{proof}
  
\section{Proof of dp-minimality}

  \begin{definition}
  For a large indiscernible sequence of elements $I=(a_{j})_{j\in J}$ that fits into a cage or nest $\D$
  we let $env(I)=int(\D)$. Notice that this does not depend on the choice of $\D$ by Corollary \ref{fitting}.
  
  Given an indiscernible sequence of tuples $(a_{j})_{j\in J}$, $a_{j}=(a_{j}^{l})_{l=1}^{n}$ such that
  $(a^{l}_{j})_{j\in J}$ is not constant for any $1\leq l\leq m$ we let $env(I)=\bigcup_{l=1}^{m}env((a^{l}_{j})_{j\in J})$.
  \end{definition}
  
  \begin{definition}
  Given a large indiscernible sequence of elements we write $\widebar{env}(I)=env(I)\cup pol(\mathcal{D})$ if $env(I)$ is the interior of a cage or nest $\mathcal{D}$ and $\widebar{env}(I)=env(I)\cup nod(I)$ if $I$ is free.
  \end{definition}

  \begin{lemma}
  \label{criterion indiscernibility}Let $I=(a_{j})_{j\in J}$, $a_{j}=(a_{j}^{l})_{l=1}^{n}$ be a large indiscernible sequence of tuples. Then $I$ is indiscernible over a set $B$ of parameters if and only if $B\cap env(I)=\emptyset$.
  \end{lemma}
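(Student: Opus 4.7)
For the forward direction, if $b \in B \cap env(I)$ then by the definition for tuple sequences there is a coordinate index $l$ with $b \in env((a_j^l)_{j\in J})$. When $(a_j^l)_{j\in J}$ fits into a large cage or nest, Lemma~\ref{parameters and cages} applies; when it is free, Corollary~\ref{free envelopes and indiscernibility} applies. Either way the coordinate sequence, and hence $I$, fails to be indiscernible over $b$, so $I$ is not indiscernible over $B$.

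For the converse, assume $B \cap env(I) = \emptyset$ and, aiming at a contradiction, that $I$ is not indiscernible over $B$. By a Ramsey extraction produce an indiscernible sequence $I' = (a'_j)_{j\in J}$ over $B$ with the same EM-type over $\emptyset$ as $I$; the forward direction applied to $I'$ gives $B \cap env(I') = \emptyset$. By homogeneity of the monster $\mathbb{G}^{\iota}$ there is an automorphism $\sigma$ with $\sigma(I) = I'$, and by Corollaries~\ref{invariance of cage interior} and~\ref{invariance free envelope}, $\sigma$ sends $env(I)$ onto $env(I')$. The goal is to show $tp(I/B) = tp(I'/B)$, for then the indiscernibility of $I'$ over $B$ transfers to $I$ and contradicts the failure assumption.

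This type identification is verified atomic-formula by atomic-formula and coordinate by coordinate. Fix $l$. If $(a_j^l)_{j\in J}$ fits into a cage or nest $\mathcal{D}^l$, then Corollary~\ref{cycles and cages} (or~\ref{cycles and nests}) classifies any cycle $c$ built from $B$ and a few $a_j^l$'s relative to $int(\mathcal{D}^l) = env(I^l)$, and Lemma~\ref{separation in cages} says the partition induced by $c$ on the rest of $I^l$ is trivial; since $B$ avoids $env(I^l)$, the truth of $b \sim_c a_j^l$ depends only on the order type of the coordinates appearing in $c$. If $(a_j^l)_{j\in J}$ is free, Lemma~\ref{free not indiscernible} provides the same kind of dichotomy: either $env(I^l)$ lies on one side of $c$ (uniform separation) or $c$ sits inside some $env_{I^l}(a_j^l) \cup nod(I^l)$, whose complement contains $B$ and on which the last clause of that lemma again enforces uniformity.

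The main obstacle is the cross-coordinate bookkeeping: an atomic formula over $B$ may involve a single cycle built from vertices in several coordinates together with elements of $B$, and one must verify that the coordinate-wise uniformities assemble into a coherent identification of types. The invariance corollaries~\ref{invariance of cage interior} and~\ref{invariance free envelope} are central here, as they ensure the envelope structure of $I$ transfers intact to $I'$ under $\sigma$ and hence that the relative position of $B$ with respect to $I$ matches that with respect to $I'$.
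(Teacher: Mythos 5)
Your forward direction matches the paper's: reduce to a coordinate, then invoke Lemma \ref{parameters and cages} in the caged/nested case and Corollary \ref{free envelopes and indiscernibility} in the free case. That part is fine.

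The converse is where your proposal diverges from the paper and where it has a genuine gap. You propose to extract an indiscernible $I'$ over $B$ of the same EM-type, pick $\sigma$ with $\sigma(I)=I'$, and then show $tp(I/B)=tp(I'/B)$ "atomic formula by atomic formula." But the invariance corollaries only tell you that envelopes are preserved set-wise by automorphisms fixing $I$ set-wise (resp. $I'$), and that $\sigma$ carries $env(I)$ to $env(I')$; they do not give you a map fixing $B$ pointwise and carrying $I$ to $I'$, which is what a type identification over $B$ actually requires. The "coordinate-wise uniformity" you describe via Lemma \ref{separation in cages} and Lemma \ref{free not indiscernible} controls where a single cycle's separation boundary sits relative to one coordinate's envelope, but an atomic formula over $B$ involves a single cycle threading through several coordinate envelopes and possibly through $B$; your sketch acknowledges this "cross-coordinate bookkeeping" obstacle but does not resolve it. As written, the decisive step is asserted, not proved.

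The paper's proof of the converse takes a different and more constructive route that sidesteps exactly this problem. Given an order-preserving bijection $\lambda$ of $J$, it takes an automorphism $\sigma$ sending $a_j\mapsto a_{\lambda(j)}$ (which exists by indiscernibility over $\emptyset$) and then \emph{surgically redefines} it: $\sigma_* := \sigma_{\restriction env(I)} \cup id_{env(I)^c}$. This $\sigma_*$ fixes $B$ pointwise by construction (since $B\cap env(I)=\emptyset$), and still permutes the $a_j$ as $\lambda$ does. The real content of the proof is then showing $\sigma_*$ is still an automorphism of $\mathbb{G}^\iota$: at the graph level this is handled by Lemma \ref{pole crossing} and Observation \ref{not through node} (the envelope boundary consists only of poles/nodes, so no edge straddles it, provided $\sigma$ does not swap poles — which one can arrange since the order-preserving bijections of a dense unbounded $J$ have no index-two subgroup), and at the level of the separation predicates $R_n$ it is a case analysis on how a cycle $c$ meets $\widebar{env}(I^l)$, using Corollary \ref{cycles and cages}, Lemma \ref{cycles and nests}, and Lemma \ref{free not indiscernible}. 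If you want to salvage your route, you would end up having to build a $B$-fixing automorphism taking $I$ to $I'$, which is essentially this same cut-and-paste construction; it is cleaner to do it directly as the paper does, taking $I' = (a_{\lambda(j)})_j$ rather than a Ramsey extraction.
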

  \begin{proof}
  We have already seen that if $B\cap env(I)\neq\emptyset$, then $I$ is not indiscernible over $B$ (Lemma \ref{parameters and cages} and Corollary \ref{free envelopes and indiscernibility}).
  
  Let $\lambda:J\to J$ an order-preserving bijection. Take an automorphism $\sigma$ of $\G^{\iota}$ sending
  $a_{j}$ to $a_{\lambda(j)}$. It is clear from the definition that $\sigma$ preserves  $env(I^{l})$ for each of the non-constant components of $I^{l}$ of $I$. We claim that $\sigma_{*}:=\sigma_{\restriction env(I)}\cup id_{env(I)^{c}}$ is an automorphism of $\G^{\iota}$.
  At the level of graphs this is clear from Lemma \ref{pole crossing} and Observation \ref{not through node} as long as $\sigma$ does not flip the two poles of the cage associated to $I^{l}$ for some $l$. However, we can always choose $\sigma$ so that this is the case, since the group of order preserving automorphisms of $J$ does not contain a subgroup of index two.
  
  Given a cycle $c$ we need to show that $\sim_{\sigma^{*}(c)}$ is the psush-forward of $\sim_{c}$ by automorphism $\sigma^{*}$. Fix $b,b'$ outside of $c\cup env(I)\cup c$ and $b''\in env(I)\cup c$.
  It suffices to show that $b\sim_{c}b'$ if and only if $b\sim_{\sigma^{*}(c)}\sigma(b)$.
  t
  
  Assume first that $c\subseteq \widebar{env}(I^{l})$ for some $1\leq l\leq n$ and $c$ does not separate $I^{l}$ into two subsequences unbounded on one side (this happens, in particular, if $c\subseteq \widebar{env}(I^{l})$ and $I^{l}$ is free).
  Then any two elements outside of $\widebar{env}(I^{l})$ are on the same side of $c$  (see Lemma \ref{free not indiscernible}) and since $\sigma(c)=\sigma^{*}(c)$ has the same property we have $\sigma(b)\sim_{\sigma(c)}b$.
  Then $b\sim_{c}b'$ is equivalent to $ \sigma(b)\sim_{\sigma(c)}\sigma(b')$, in turn equivalent $b\sim_{\sigma(c)}\sigma(b')$ and we are done.
  
  By virtue of Corollaries \ref{cycles and cages} and \ref{free not indiscernible} and Lemma \ref{cycles and cages} the other possibility is that $c$ contains a collection of finitely many subpaths $p_{1},\dots p_{r}$ with disjoint interior, where for each $1\leq i\leq m$ either $p_{i}$ crosses from pole to pole a cage $\mathcal{D}^{i}=(\delta^{i}_{l})_{l\in L}$ in which $I^{k_{i}}$ fits or $r=1$ and $p_{1}=c$ is contained in a nest $\mathcal{D}^{1}$ in which some $I^{k_{1}}$ fits and separates $I^{k_{1}}$ into two subsequences unbounded on one side.
  
  We might as well just consider the first subcase, since the other is almost formally identical. It is easy to see that $b'\sim_{\sigma^{*}(c)}b''$ if and only if $b'\sim_{\sigma(c)}b''$ for any $b,b''\in int(\D^{l})$.
  If $d\in int(\D^{i})\setminus D^{i}_{l,l'}$ for some $l<l'$ in $L$ such that
  $c\cap int(\D^{i})\subseteq D^{i}_{l,l'}$. Then it is easy to see that the truth value of $b\sim_{c'}d$ is the same for any $c'$ that is the result of replacing $p_{i}$ by $\delta^{i}_{l''}$ for some $l''\in (l,l'')$ and of replacing $p_{i'}$ by any path between the poles of $\C^{i'}$ for $1\leq i'\leq m$, $i\neq i'$. The same holds for $\sigma^{*}(c)$.
  It follows that if $j$ is large enough
  $a^{k_{i}}_{j}\sim_{c}b$ if and only if $a^{k_{i}}_{j}\sim_{\sigma^{*}(c)}b$ while on the other hand $a^{k_{i}}_{j}\sim_{c}b'$ if  and only if $ \sigma(a^{k_{i}}_{j}) \sim_{\sigma(c)}b'$ if and only if $a^{k_{i}}_{j}\sim_{\sigma^{*}(c)}\sigma(b')$. This concludes the proof.

  %
  
  \end{proof}
  
  \begin{lemma}
  \label{mutually indiscernible}Two large indiscernible sequences of tuples $I =(a_{j})_{j\in J}$, $I'=(a'_{j})_{j\in J'}$ are mutually indiscernible only if $env(I)\cap env(I')=\emptyset$.
  \end{lemma}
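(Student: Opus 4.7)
The plan is to argue contrapositively: assume a vertex $b$ lies in $env(I)\cap env(I')$ and derive that $I$ and $I'$ cannot be mutually indiscernible. Since $env$ is defined componentwise as a union over non-constant coordinates, I pick components $I^l,(I')^{l'}$ with $b\in env(I^l)\cap env((I')^{l'})$; as components of mutually indiscernible tuple sequences are themselves mutually indiscernible, it suffices to handle sequences of single vertices, which I continue to denote $I=(a_j)_{j\in J}$ and $I'=(a'_j)_{j\in J'}$. Mutual indiscernibility together with Lemma \ref{criterion indiscernibility} already yields $\{a'_j\}\cap env(I)=\emptyset$ and $\{a_j\}\cap env(I')=\emptyset$, so the real task is to leverage the single overlap vertex $b$ into forcing some $a_j$ into $env(I')$ (equivalently, some $a'_j$ into $env(I)$).

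The case analysis depends on whether each sequence is free or fits into a large cage or nest. Suppose first both $I$ and $I'$ fit into large cages or nests $\D$ and $\D'$, so that $env(I)=int(\D)$ and $env(I')=int(\D')$; Lemma \ref{parameters and cages} combined with mutual indiscernibility yields $\{a_j\}\subseteq ext(\D')$ and $\{a'_{j'}\}\subseteq ext(\D)$. When $pol(\D)\neq pol(\D')$, Lemma \ref{cage compatibility} gives either $int(\D)\cap int(\D')=\emptyset$, directly contradicting $b$ lying in both, or, up to symmetry, $\ci(\D)\setminus pol(\D')\subseteq E'_{j,k}\subseteq int(\D')$ for some $j,k\in J'$; in the latter sub-case $\{a_j\}\subseteq\ci(\D)\subseteq int(\D')\cup pol(\D')$, and since $I$ is non-constant and $|pol(\D')|\leq 2$, infinitely many $a_j$ land in $int(\D')$, contradicting $\{a_j\}\subseteq ext(\D')$. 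If instead $pol(\D)=pol(\D')$, I would merge the two families into a single larger cage or nest via Observation \ref{order property} (thinning first to ensure the pairwise-intersection hypothesis), and then run the same counting argument inside the refined elementary-piece structure of the merged object.

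In the mixed case ($I$ fits into $\D$ while $I'$ is free, with $b\in int(\D)\cap env_{I'}(a'_{j'_0})$), Lemma \ref{parameters and cages} still puts $\{a'_{j'}\}\subseteq ext(\D)$. By the definition of $env_{I'}$, some path witnesses $b\in\H(I')_{j'_0}$ (or $b$ is separated from $I'$ by a cycle lying in $\H(I')_{j'_0}\cup nod(I')$, a sub-case reduced to the previous one by replacing $b$ with a suitable vertex of that cycle); this path avoids $nod(I')$ but, by Lemma \ref{pole crossing}, must enter $ext(\D)$ through $pol(\D)$ to reach $\X(I')_{j'_0}$. Using an order-preserving automorphism of $J'$ coming from indiscernibility of $I'$, I produce an analogous path from a vertex of $int(\D)$ to $\X(I')_{j'_1}$ with $j'_1\neq j'_0$ also avoiding $nod(I')$; concatenating, I obtain a walk between $\X(I')_{j'_0}$ and $\X(I')_{j'_1}$ not passing through $nod(I')$, contradicting Lemma \ref{grapes}. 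The remaining case where $I$ and $I'$ are both free is similar in spirit: the paths from $b$ to $\X(I)_{j_0}$ and $\X(I')_{j'_0}$ avoiding their respective nodes, combined with Lemma \ref{one point intersection}, let me construct cycles in $\S(I)_{j_0}$ or $\S(I')_{j'_0}$ that split the other sequence into two infinite parts, contradicting Lemma \ref{alternations}.

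The most delicate point I anticipate is the sub-case $pol(\D)=pol(\D')$ of the first case, where Lemma \ref{cage compatibility} is silent and one needs to argue directly inside the merged cage or nest about which elementary pieces can host $I$ versus $I'$. The other cases reduce cleanly once the right tool from the toolbox (Lemmas \ref{parameters and cages}, \ref{pole crossing}, \ref{grapes}, \ref{alternations}, and \ref{one point intersection}) is identified, and the underlying mechanism throughout is that any geometric overlap of the envelopes forces elements of one sequence to invade the influence region of the other, defeating mutual indiscernibility via Lemma \ref{criterion indiscernibility}.
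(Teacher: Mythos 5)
Your three-way case split (both fit, one fits and one free, both free), the preliminary reduction to sequences of single vertices, the observation $I\cap env(I')=I'\cap env(I)=\emptyset$, and the $pol(\D)\neq pol(\D')$ sub-case of Case 1 all match the paper exactly. Beyond that, however, your argument is an outline that skips the substantive steps. For $pol(\D)=pol(\D')$, merging $\D$ and $\D'$ and "counting" is not obviously available: unlike in Corollary \ref{fitting}, there is no reason here for $L$ and $L'$ each to be cofinal and dense in the merged index set, and you do not say what is being counted. The paper instead classifies each $\delta_l$ directly by its position relative to $\D'$ (either $\delta_l\subseteq ext(\D')$, or $\delta_l\subseteq D'_{l'_0,l'_1}$ in which case $\delta_l$ must split $(\delta'_{l'})_{l'}$ into two unbounded intervals), and shows that every configuration consistent with $int(\D)\cap int(\D')\neq\emptyset$ places some $a_j$ in $int(\D')$ or some $a'_{j'}$ in $int(\D)$, against Lemma \ref{parameters and cages}.

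In the mixed case you presuppose $\X(I')_{j'_0}\subseteq ext(\D)$ before applying Lemma \ref{pole crossing}, but this containment is itself a substantial step: one must run every $c\in\S(I')_{j'}$ through the trichotomy of Corollary \ref{cycles and cages}/Lemma \ref{cycles and nests} and use Lemma \ref{separation in cages} to show that positions (2) and (3) force $I\cap\X(I')_{j'}\neq\emptyset$, hence $I\cap env(I')\neq\emptyset$. Also, the sub-case $b\in\nnb{c}{I'}$ does not reduce by "replacing $b$ with a vertex of $c$": such a vertex need not lie in $int(\D)$, so the pole-crossing argument does not restart. The paper avoids this by first establishing $\X(I')_{j'}\subseteq ext(\D)$, then that every path from $\X(I')_{j'}$ avoiding $nod(I')$ stays in $ext(\D)$ (so $\H(I')_{j'}\subseteq ext(\D)$), and only then treating cycles $c\subseteq\H(I')_{j'}\cup nod(I')$ and their non-$I'$ side. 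Finally, the free--free case, which you dispatch in one sentence, is the most delicate in the paper: it introduces the connected set $K_{j_0}$ of vertices of $\H(I)_{j_0}$ not lying in any $\ppb{c}{a_{j_0}}$, $c\in\S(I)_{j_0}$, shows $env_{I'}(a'_{j'})$ avoids $K_{j_0}$, then successively that $\X(I')_{j'}$ avoids $\X(I)_{j_0}$ and then all of $env_I(a_{j_0})$, before closing with a final cycle argument; invoking Lemmas \ref{one point intersection} and \ref{alternations} alone does not produce this chain.
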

  \begin{proof}
  We may assume $I$ and $I'$ are sequences of elements.
  By the previous Lemma we know that $env(I )\cap I'=env(I')\cap I=\emptyset$.
  
  There are three possible cases.
  \subsection*{Both $I$ and $I'$ fit into a cage or nest}
    Assume $I$ fits into $\mathcal{D}$ and $I'$ into $\mathcal{D}'$, where each of $\mathcal{D}=(\delta_{l})_{l\in L}$ and $\mathcal{D}'=(\delta_{l})_{l\in L'}$ is either a large cage or a large nest.
    
    If $pol(\D)\neq pol(\D')$ it follows from Lemma \ref{cage compatibility} that either $int(\D)$ and $ int(\D')$ are disjoint or one of the two sets is contained in the other, contradicting the fact that $I\cap int(\D')=I'\cap int(\D)=\emptyset$.
    
    Suppose now $pol(\D)=pol(\D')$. Consider first the case in which $\D$ and $\D'$ are nests.
    For any $l\in L$ either $\delta_{l}\subset ext(\D')$ or $\delta_{l}\subseteq D_{l'_{0},l'_{1}}$ for a pair $l'_{0},l'_{1}\in L'$.
    
    In the latter case it cannot be that $\delta'_{l'_{0}}$ and $\delta'_{l'_{1}}$ lie on the same side of $\delta_{l}$ for then one side of $\delta_{l}$ must be contained in $D'_{l'_{0},l'_{1}}$ and thus
    $I\cap int(\D')\neq \emptyset$. So in that case $\delta_{l}$ must separate the sequence $(\delta'_{l'})_{l'\in L}$ into two unbounded intervals.
    
    If $\delta_{l}$ lies in $ext(\D')$ for all $l\in L$, then either $int(\D')\subseteq int(\D)$, which is impossible, or $ int(\D)\cap int(\D')$.
    We are left with two possibilities:
    \begin{itemize}
    \item There are $l_{0}<l_{1}<l_{2}\in L$ such that $\delta_{l_{k}}$ lies in $\ci(\D')$ for $k=0,1,2$. In this case $D_{l_{0},l_{2}}\subseteq int(\D')$ and $I\cap int(\D')\neq\emptyset$.
    \item There are $l_{0},l_{1}\in L$ with $\delta_{l_{0}}\subseteq \ci(\D')$ and $\delta_{l_{1}}\subseteq ext(\D')$. It follows that $D_{l_{0},l_{1}}$ contains $\dd'_{l'}$ for infinitely many values of $l'\in L$ and thus $D_{l_{0},l_{1}}\cap I'\neq\emptyset$.
    \end{itemize}
    
    The case analysis when $\D$ and $\D'$ is entirely analogous.
    %
    
  \subsection*{The sequence $I$ fits into a cage or nest and $I'$ is free }
    
    Suppose that $I$ fits into a large cage or nest $\mathcal{D}=(\delta_{l})_{l\in L}$. Let $c$ be a cycle. By Lemmas \ref{cycles and cages} and \ref{cycles and nests} we have three possibilities for $c$:
    \begin{enumerate}
    \item \label{q1}$c\subseteq ext(\D)$
    \item \label{q2}$c\subseteq D_{l_{1},l_{2}}\cup\delta_{l_{1},l_{2}}$ for $l_{1}<l_{2}$ in $L$.
    \item \label{q3}$\mathcal{D}=(\delta_{l})_{l\in L}$ is a cage and $c$ is of the form $p*q$, where $p$ lies in $ext(\D)$ and $q$ crosses $int(\D)$ from pole to pole
    \end{enumerate}
    
    If $c\in \S(I')_{j'}$ for some $j'\in J'$, then we can rule out \ref{q2}
    since it implies that either $I\setminus D_{l_{1},l_{2}}\subseteq\X(I')_{j'}$ or $\X(I')_{j'}\subseteq int(\D)$. Likewise, we can rule out \ref{q3}, since it implies $I\cap\X(I')_{j'}\neq\emptyset$. Since $I\cap env(I')=\emptyset$, we conclude that $\X(I')_{j'}\subseteq ext(\mathcal{D})$.
    
    Observe that any path $q$ which starts at $\X(I')_{j'}$ and does not pass through $nod(I')$ must stay in $ext(\mathcal{D})$. Otherwise $q$ crosses $pol(\mathcal{D})$ and indiscernibility of $I'$ over $I$ implies any two such sets can be joined by a path avoiding $nod(I')$, against Lemma \ref{intersection at the node}.
    
    Suppose now $c$ is a cycle that is connected to $\S(I')_{j'}$ by a path $q$ that does not go through $nod(I')$. The preceding remarks exclude cases \ref{q2} and \ref{q3} above, so $c\subseteq ext(\D)$. In case \ref{q1}. Recall that $\nnb{c}{I'}\subseteq env_{I}(a'_{j'})$. Since $I\cap env_{I}(a'_{j'})=\emptyset$ we must have $\nnb{c}{I'}\subseteq ext(\mathcal{D})$ and thus $env(I')\subseteq ext(\mathcal{D})=env(I)^{c}$ as desired.
    
  \subsection*{Both $I$ and $I'$ are free}
    Suppose now both $I$ and $I'$ are free and $env_{I}(a_{j})\cap env_{I'}(a'_{j'})\neq\emptyset$ for some and hence for all $j\in J$ and $j'\in J'$.
    Fix $j_{0}\in J$ such that $env_{I}(a_{j_{0}})$ does not intersect $nod(I')$.
    
    Recall that $\mathcal{H}(I)_{j}$ is the collection of vertices in $env_{I}(a_{j_{0}})$ that can be joined to $\X(I)_{j}$ by a path that does not pass through $nod(I)$. As a consequence of Lemma \ref{external are connected}, the subset $K_{j_{0}}$ of those vertices in $\mathcal{H}(I)_{j}$ that are not contained in $\ppb{c}{a_{j}}$ for a cycle $c\in\S(I)_{j}$ is connected.
    
    It follows that $env_{I'}(a'_{j'})$ cannot intersect $K_{j_{0}}$ for any $j'\in J'$, since otherwise by indiscernibility of $I'$ over $I$ the same would hold for any other $j''\in J'$, and there would exist a path from $env_{I'}(a'_{j'})$ to $env_{I'}(a'_{j''})$ avoiding $nod(I')$, contrary to \ref{intersection at the node}.
    
    Consider any $c'\in\S(I')_{j'}$. We claim that $c'\cap\X(I)_{j_{0}}=\emptyset$.
    Suppose not. If $c'$ contains points both in and outside $\X(I)_{j_{0}}$, then it has to intersect $K_{j_{0}}$. So in fact $c'\subseteq\X(I)_{j_{0}}$. But then it follows from Lemma \ref{merging cycles} that $c'\subset\ppb{c}{a_{j_{0}}}\cup c$ for some $c\in S(I)_{j_{0}}$, which implies that $I'\cap env(a_{j_{0}})\neq\emptyset$, since $c'$ separates $a_{j'}$ and $I'\setminus\{a_{j'}\}$.
    
    We conclude that $\X(I')_{j_{'}}\cap\X(I)_{j_{0}}=\emptyset$. Consider now $c\subset\H(I)_{j_{0}}\cup nod(I)$, $c\nin\S(I)_{j}$. We claim that $c'\subseteq\nnb{c}{I}$. By Lemma \ref{merging cycles} and the discussion above, we may assume that $c$ contains point in $K_{j_{0}}$, so $c\cap c'=\emptyset$. If $c'\subseteq\nnb{c}{I}$, then either $I\subseteq\ppb{c'}{a_{j'}}$ or $\ppb{c'}{a_{j'}}\subseteq\nnb{c}{I}\subseteq env_{I}(a_{j_{0}})$: a contradiction either way. So $\ppb{c'}{a'_{j'}}\cap\nnb{c}{I}=\emptyset$. We conclude that $env_{I}(a_{j_{0}})\cap\X(I')_{j'}=\emptyset$.
    
    Since $K_{j_{0}}\cap env_{I'}(a_{j'})=\emptyset$, it follows that $\H(I')_{j'}\cap env_{I}(a_{j_{0}})=\emptyset$. Let $c'\subset\H(I')_{j'}$ be a cycle that does not separate $I'$. It easily follows that either $env_{I}(a_{j_{0}})\subseteq\nnb{c'}{I'}\subseteq env_{I'}(a_{j'})$ or
    $env_{I}(a_{j_{0}})\cap\nnb{c'}{I'}=\emptyset $: a contradiction.

    \end{proof}

    Given the tools above the proof of dp-minimality is straightforward.
    
    \begin{proof}[Proof of Theorem \ref{main theorem}]
    Suppose we are given two sequences of tuples  $I_{1}$ and $I_{2}$ that are mutually indiscernible. We may assume both sequences are large. Lemma \ref{mutually indiscernible} implies $env(I_{1})\cap env(I_{2})=\emptyset$ so $b\nin env(I_{i})$ for some $i\in\{1,2\}$ so that $I_{i}$ is indiscernible over $b$ by \ref{criterion indiscernibility}.
    \end{proof}
    
    \bibliographystyle{plain}
    \bibliography{bibliography}

\end{document}